\numberwithin{equation}{section}
\newtheorem{theorem}{Theorem}[section]
\newtheorem{rem}[theorem]{Remark}
\newtheorem{lemma}[theorem]{Lemma}
\newtheorem{prop}[theorem]{Proposition}
\newtheorem{conjecture}[theorem]{Conjecture}
\newtheorem{corollary}[theorem]{Corollary}
\def\det{\mathop{\rm det}\nolimits}
\def\dbar{\bar\partial}
\def\ddbar{\partial\bar\partial}
\def\d{\partial}
\def\cG{{\mathcal G}}
\def\cE{{\mathcal E}}
\def\cM{{\mathcal M}}
\def\cD{{\mathcal D}}
\def\cZ{{\mathcal Z}}
\def\cH{{\mathcal H}}
\let\ol=\overline
\let\ep=\varepsilon
\let\vp=\varphi 
\def\bC{{\mathbb C}}
\def\bR{{\mathbb R}}
\def\a{{\alpha}}
\def\z{{\zeta}}
\def\hg{\tilde{\gamma}}
\def\g{\gamma}
\def\b{\beta}
\def\k{{\kappa}}
\title[Strict Convexity]
{Strict convexity of the Mabuchi functional for energy minimizers }
\author{Long Li}
\address{Institute Fourier, 100 rue des maths 38610 Gi\`eres, Grenoble, France}
\email{Long.Li1@univ-grenoble-alpes.fr}
\begin{document}
\maketitle 

\begin{abstract}
There are two parts of this paper.
First, we discovered an explicit formula for the complex Hessian of the weighted log-Bergman kernel on a parallelogram domain,
and utilised this formula to give a new proof about the strict convexity of the Mabuchi functional along a smooth geodesic.
Second, when a $C^{1,1}$-geodesic connects two non-degenerate energy minimizers, 
we also proved this strict convexity, by showing that such a geodesic must be non-degenerate and smooth. 
\end{abstract}

\section{Introduction}

Suppose $X$ is an $n$-dimensional compact complex K\"ahler manifold,
and $\omega$ is its associated K\"ahler form. 
Let $\cH$ be the space of all smooth K\"ahler potentials of $\omega$, i.e. 
$$\cH: =\{ \vp\in C^{\infty}(X); \ \ \ \omega+ i\ddbar\varphi > 0  \}. $$
Up to a chosen normalization, it can be identified with the space of all K\"aher metrics in the cohomology class $[\omega]$.
Moreover, the space $\cH$ becomes an infinite dimensional Riemannian manifold, after equipped with an $L^2$ metric on its tangent space.

A sub-geodesic $\cG$ in the space $\cH$ is 
an $S^1$-invariant positive $(1,1)$ current 
on $X\times [0,1] \times S^1  $ with the following form
$$ \cG: = \pi^*\omega+ dd^c\Phi \geq 0, $$
and $\cG$ is a geodesic connecting two points $\vp_0, \vp_1\in \cH$,
if it is a sub-geodesic and satisfies the following equation on $X\times [0,1]\times S^1$
with the boundary value $\Phi|_{X\times \{0\} } = \vp_0, \Phi|_{X\times\{1\} } = \vp_1$
\begin{equation}
\label{intr-010}
 \cG^{n+1} = 0.
\end{equation}
According to Chen \cite{C00}, such a geodesic always exists with an arbitrary boundary value in $\cH$,
but the current $\cG$ can be degenerate and has merely $C^{1,1}$ regularities in general. 

Sometimes we view $\cG$ as a curve of K\"ahler potentials defined on $X\times [0,1]$, and call it as a \emph{geodesic segment}.
Similarly, if a geodesic $\cG$ is defined on $X\times [0, +\infty)\times S^1$, then we call it as a \emph{geodesic ray}. 

On a Fano manifold,
Berndtsson \cite{Bo11} proved that the so called \emph{Ding functional} $\cD$ is convex
along any geodesic $\cG$. 
Amazingly, this proof is so strong that it also implies the following two facts:
first, $\cD$ is also convex along any sub-geodesic;  
second, $\cD$ is  \emph{strictly convex} along any geodesic (segment or ray),
in the sense that $\cG$ is generated by a holomorphic vector field whenever $\cD$ is linear along it.
These convexity results turned out to be very useful in the study of uniqueness and existence problems of the \emph{K\"ahler-Einstein} metrics.

Recently in the work of Berman-Berndtsson \cite{BB} and Chen-Li-P\u aun \cite{CLP},
the Mabuchi functional $\cM$ is proved to be convex and continuous along a $C^{1,1}$-geodesic segment $\cG$. 
The key observation in \cite{BB} is that the complex Hessian of the Mabuchi functional 
can be approximated weakly by the push forward of the following closed positive $(n+1, n+1)$ currents as $m\rightarrow \infty$
$$ i\ddbar \log K_t^{(m)}\wedge \cG^n. $$
Here $K_t^{(m)}(z,z)$ is the Bergman kernel of a unit ball in $\bC^n$ with the weight $m\phi$, where $\cG: = i\ddbar \phi$ locally.

The above $(n+1, n+1)$ currents are positive, since Berndtsson \cite{Bo} proved that 
\begin{equation}
\label{intro-0001}
i\ddbar_{t,z}\log K_t\geq 0. 
\end{equation}
for any plurisubharmonic(\emph{psh})  weight $\phi$ on a pseudoconvex domain in $\bC_{t}\times \bC^n_z$.
However, the computation in \cite{Bo} stoped at calculating the $(1,1)$ current 
$i\ddbar_t K_t(z,z) $
on a product domain $D:= U\times V$,
and then Berndtsson proved the positivity 
by some other tricks which avoid using the explicit formula of the complex Hessian in equation (\ref{intro-0001}).

The first part of this paper is aimed to give such an explicit formula in a parallelogram  domain of $\bC_{t}\times \bC^n_z$.
Let $v:=[1,a]$ be a vector in $\bC_t \times \bC^n_z$, and a parallelogram domain $D$ with slope $v$ is defined fiberwisely to be 
$$ D_t: = \{ z+ ta; \ \ z\in D_0\},$$
where $D_0$ is a strictly pseudoconvex domain in $\bC^n$. 
Then we found the following formula 
\begin{theorem}
\label{thm-intr-001}
Let $\phi$ be a smooth strictly plurisubharmonic weight function on the parallelogram  domain $D$.
Then the log-Bergman kernel of $D$ has the following complex Hessian lower bound at a point $z$ acting in the direction $v$
\begin{equation}
\label{intr-001}
i\ddbar \log K_t (v,\bar v) \geq \frac{ [i\ddbar\phi_a \wedge T_a]_{D_{a,t}}}{ K_{t}(z+ta,z+ta)} + \frac{|| \dbar_X \g ||^2}{ K_{t}(z+ta,z+ta)},
\end{equation}
where $\phi_a(t,z) = \phi (t, z+ta)$, $T_a$ is some positive $(n,n)$ form, 
and the $(n-1,0)$ form $\g$ is the solution of the equation $\d^{\phi}_X\g = \d^{\phi}_t k_t $ on each $D_t$.
Moreover, $\dbar_X\g$ is primitive w.r.t. the Euclidean metric. 

\end{theorem}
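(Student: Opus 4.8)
The plan is to push Berndtsson's curvature computation from \cite{Bo} --- which he stopped just short of making explicit --- all the way through, the parallelogram geometry being tailored precisely so that a linear change of variables reduces matters to the product case underlying the Bergman-kernel argument of \cite{BB}. First, on $D$ introduce coordinates $(t,w)$ by $w:=\z-ta$, with $\z$ the standard coordinate on $\bC^n_z$. The fiberwise map $\z\mapsto w$ is a translation, hence has Jacobian one, carries each fiber $D_t$ onto the fixed strictly pseudoconvex domain $D_0$ (the slice $D_{a,t}$ of the statement), and transforms the weighted Bergman kernels by $\wt K_t(w,w)=K_t(w+ta,w+ta)$; moreover $v=[1,a]$ becomes the pure direction $\d_t$. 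Writing $\phi_a(t,w)=\phi(t,w+ta)$ for the transformed weight, it suffices to bound $\d_t\dbar_t\log\wt K_t$ from below at $w=z$, where now $\wt K_t$ denotes the weighted Bergman kernel of the product domain with fiber $D_0$ and weight $\phi_a(t,\cdot)$.

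Next, fix $w_0=z$ and let $k_t=\wt K_t(\,\cdot\,,w_0)$ be the reproducing kernel of $A^2(D_0,e^{-\phi_a(t,\cdot)})$, so that $\wt K_t(w_0,w_0)=\|k_t\|_t^2$ and $\log\wt K_t=\log\|k_t\|_t^2$. A standard differentiation of the reproducing identity $\langle f,k_t\rangle_t=f(w_0)$ gives $\d_t k_t=P_t\big((\d_t\phi_a)k_t\big)$, with $P_t$ the Bergman projection; equivalently $\d^{\phi_a}_t k_t=-(I-P_t)\big((\d_t\phi_a)k_t\big)$ is exactly the part of $(\d_t\phi_a)k_t$ orthogonal to the holomorphic functions, and consequently $\d_t\log\wt K_t=\wt K_t^{-1}\int_{D_0}(\d_t\phi_a)\,|k_t|^2 e^{-\phi_a}$. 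Differentiating once more in $\bar t$ and using this identity again to compute the cross terms, one reaches an exact expression whose leading piece is the curvature term $\wt K_t^{-1}\int_{D_0}(\d_t\dbar_t\phi_a)\,|k_t|^2 e^{-\phi_a}$ --- which is precisely $[\,i\ddbar\phi_a\wedge T_a\,]_{D_{a,t}}/K_t$ with $T_a:=|k_t|^2 e^{-\phi_a}\,dV$ the positive $(n,n)$ form of the statement --- together with further terms built from the Bergman projections of $(\d_t\phi_a)k_t$ and $(\dbar_t\phi_a)k_t$, of which some are manifestly nonnegative and one, essentially $\|\d^{\phi_a}_t k_t\|^2$, carries the wrong sign.

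To dispose of that wrong-sign term one introduces on $D_0$ the $(n-1,0)$ form $\g$ solving $\d^{\phi_a}_X\g=\d^{\phi_a}_t k_t$, chosen of minimal norm so that $\g$ is $\d^*$-closed; the weighted K\"ahler identity then forces $\Lambda\dbar_X\g=0$, i.e.\ $\dbar_X\g$ is primitive for the Euclidean metric. One now invokes the weighted Bochner--Kodaira--Nakano identity on $D_0$: strict plurisubharmonicity of $\phi$ makes its curvature term positive, strict pseudoconvexity of $D_0$ makes its boundary integral have the favourable sign, and the primitivity of $\dbar_X\g$ keeps the Lefschetz-type commutators under control, so that the identity bounds the surviving terms below by $\wt K_t\,\|\dbar_X\g\|^2$. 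After all the cancellations,
\begin{equation*}
\d_t\dbar_t\log\wt K_t \;\geq\; \frac{[\,i\ddbar\phi_a\wedge T_a\,]_{D_{a,t}}}{\wt K_t}\;+\;\frac{\|\dbar_X\g\|^2}{\wt K_t},
\end{equation*}
and reversing the change of variables of the first step turns this into (\ref{intr-001}).

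The double differentiation of the second step is routine; the genuine obstacle is the third one --- setting up the weighted $\d^{\phi_a}_X/\dbar_X$ system on $D_0$ correctly, pinning down the minimal solution $\g$ and the primitivity of $\dbar_X\g$, and, above all, applying the Bochner--Kodaira--Nakano identity with scrupulous control of the boundary integral. That is exactly where the smoothness and strict plurisubharmonicity of $\phi$ and the strict pseudoconvexity of $D_0$ are really used, and it is precisely the computation that \cite{Bo} sidestepped by settling for positivity rather than an explicit formula.
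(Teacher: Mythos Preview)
Your reduction in the first step is exactly the paper's own manoeuvre: the translation $w=\z-ta$ carries the parallelogram to a product over the fixed fibre $D_0$, so that the Hessian of $\log K_t$ in the direction $v=[1,a]$ becomes the pure $t$-Hessian of the translated kernel (this is the paper's Section~2.2 and Lemma~\ref{bk-lem-001}). So far so good.

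The gap lies in your Steps 2--3. You identify $T_a$ with $|k_t|^2e^{-\phi_a}\,dV$, so that $[\,i\ddbar\phi_a\wedge T_a\,]$ retains only the piece $\int\phi_{a,t\bar t}\,|k_t|^2e^{-\phi_a}$; the mixed curvature terms $\int\phi_{t\bar k}\,K_t\overline{\g^k}$ and $\int\phi_{j\bar k}\,\g^j\overline{\g^k}$ are then supposed to emerge from the Bochner--Kodaira--Nakano step. But that step, as you describe it, runs the wrong way. The BKN identity for the minimal $(n,1)$-form $\a$ dual to $\g$ (with $\dbar\a=0$ and $\dbar$-Neumann boundary condition) reads
\[
\|\d^{\phi_a}_t k_t\|^2=\|\dbar^{\,*}\a\|^2=\sum_{j,k}\|\d_{\bar j}\a^k\|^2+\int\phi_{j\bar k}\,\g^j\overline{\g^k}\,e^{-\phi_a}+\text{(nonnegative boundary)},
\]
which \emph{lower}-bounds $\|\d^{\phi_a}_t k_t\|^2$. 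If this term enters your second-variation formula with a minus sign, then substituting BKN makes the inequality worse, not better: you get $-\|\dbar_X\g\|^2$ and minus the fibre curvature, the opposite of what you want. In short, BKN does not convert a negative $\|\d^{\phi_a}_t k_t\|^2$ into a positive $\|\dbar_X\g\|^2$.

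The paper sidesteps this entirely. It never isolates a ``wrong-sign'' term; instead it follows Berndtsson's packaging: set $g=k_t+dt\wedge\g$ and $T=c_n\,g\wedge\bar g\,e^{-\phi}$, so that the elementary identity $i\ddbar T=i\ddbar\phi\wedge T+c_n\,i\dbar g\wedge\overline{\dbar g}\,e^{-\phi}$ (which uses only $\d^{\phi}g=0$) has every term positive from the outset. Push forward to the base using the boundary Lemma~\ref{lem-bk-0001}, expand $c_n\,i\dbar g\wedge\overline{\dbar g}$ via the primitivity of $\dbar_X\g$ to get $\|\dbar_X\g\|^2+\|\dbar_t K_t\|^2$, and only then pass to the logarithm via the quotient rule; the cross term $|\d_t\Phi|^2$ is controlled by $\Phi\cdot\|\dbar_t K_t\|^2$ through Cauchy--Schwarz. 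Here the correct $T_a$ is the full $c_n\,g\wedge\bar g\,e^{-\phi_a}$, so that $[\,i\ddbar\phi_a\wedge T_a\,]$ is the complete quadratic form in $(K_t,\g)$ --- automatically nonnegative when $\phi$ is plurisubharmonic --- and the $\|\dbar_X\g\|^2$ term comes directly from $\dbar g\wedge\overline{\dbar g}$, not from any a posteriori inequality.
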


This formula (equation (\ref{intr-001})) can be compared with Berndtsson's previous calculation on the complex Hessian of the $Ding$-functional.
In particular, the two positive terms on the RHS of this equation is very similar to that case. 
However, this is indeed a pointwise formula. 
If we apply this formula to the sequence of weighted Bergman kernels $K^{(m)}_t$,
then the denominator $K^{(m)}(z,z)$ will converge to a delta function supported as $z$.  

The correct way is to do a blowing up analysis near the point $z$,
and then we can cook up a fiberwise holomorphic vector field by proving 
$$\dbar_X (\g^{(m)}/ K^{(m)}) \rightarrow 0$$
at this point. 
It gives a proof of the so called ``strict convexity" of the Mabuchi functional, provided enough regularities (Remark \ref{sc-rem-002}) on the geodesic.
This is exactly the question that we want to investigate in the second part of this paper.

\begin{conjecture}
Suppose the Mabuchi functional $\cM$ is linear along a $C^{1,1}$-geodesic segment $\cG$.
Then the geodesic $\cG$ is generated by a holomorphic vector field. 
\end{conjecture}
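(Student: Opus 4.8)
The proposal follows the route indicated in the introduction. If $t\mapsto\cM(\vp_t)$ is affine on $[0,1]$ along the geodesic segment $\cG$ (with $\vp_t$ the slice potentials), then the distributional complex Hessian $i\ddbar_t\,\cM(\vp_t)$ on $[0,1]\times S^1$ vanishes identically. By the key observation of \cite{BB}, this distribution is the weak limit as $m\to\infty$ of the push-forwards $p_*\!\big(i\ddbar\log K_t^{(m)}\wedge\cG^n\big)$, where $p\colon X\times[0,1]\times S^1\to[0,1]\times S^1$ is the projection, $\cG=i\ddbar\phi$ locally, and $K_t^{(m)}$ is the Bergman kernel with weight $m\phi$. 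Each of these $(1,1)$-currents is nonnegative by Berndtsson's inequality (\ref{intro-0001}); and since a sequence of nonnegative currents converging weakly to zero has vanishing mass on compact subsets of the interior, the fiber-integrated densities $i\ddbar\log K_t^{(m)}\wedge\cG^n$ tend to zero, in a suitably normalized sense, on every $[\varepsilon,1-\varepsilon]$.

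\textbf{Step 2: the pointwise formula and a blow-up.} Fix $z\in X$ and work in a small product-like chart in which the slices of $\cG$ are parallelograms with slope $v=[1,a]$, where $a$ is essentially the $\dbar$-gradient of the velocity $\dot\vp$; this is the geometric input making Theorem \ref{thm-intr-001} applicable. That theorem bounds $i\ddbar\log K_t^{(m)}(v,\bar v)$ from below, via (\ref{intr-001}), by a sum of two nonnegative terms, $[i\ddbar\phi_a\wedge T_a]/K_t^{(m)}$ and $\|\dbar_X\g^{(m)}\|^2/K_t^{(m)}$. Since $K_t^{(m)}(z,z)$ grows like $c_n\,m^n\det(i\ddbar\phi)(z)$ and decays like a Gaussian of width $m^{-1/2}$ off the diagonal, the honest way to pass to the limit is to rescale the fiber coordinates by $\sqrt m$ about $z$ and invoke the Bergman-kernel asymptotics of Tian--Zelditch--Catlin (and Berndtsson's refinements) to identify the rescaled kernels with the Bargmann--Fock model. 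Carrying the extremal $(n-1,0)$-form $\g^{(m)}$ of Theorem \ref{thm-intr-001} (the fiberwise solution of the weighted $\dbar$-equation $\d^{\phi}_X\g=\d^{\phi}_t k_t$, with weight $m\phi$) through this rescaling, and using that the fiber integral of the left-hand side of (\ref{intr-001}) tends to zero, one obtains $\dbar_X\big(\g^{(m)}/K^{(m)}(z,z)\big)\to 0$ at $z$. Raising indices with the model metric and using the primitivity of $\dbar_X\g$, the limiting object is a fiberwise holomorphic vector field $V_t$ on $X$; the vanishing of the companion term $[i\ddbar\phi_a\wedge T_a]$ should pin down $V_t$ as independent of $t$ and integrable to a one-parameter subgroup of $\Aut(X)$.

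\textbf{Step 3: globalization, and the principal obstacle.} One then patches the local holomorphic vector fields into a global $V\in H^0(X,T^{1,0}X)$ and checks that each slice $\vp_t$ is obtained from $\vp_0$ by the flow of $\Re V$, i.e.\ that $\cG$ is generated by $V$, which is the claim. \emph{The main difficulty is regularity.} Theorem \ref{thm-intr-001} and the blow-up both require $\phi$ to be smooth and strictly plurisubharmonic on the chart, while a general $C^{1,1}$-geodesic satisfies only the \emph{degenerate} equation (\ref{intr-010}) and need not be strictly positive in the fiber directions anywhere. Hence the substantive issue is to show \emph{first} that linearity of $\cM$ already forces $\cG$ to be non-degenerate, and therefore smooth by the regularity theory for the homogeneous complex Monge--Amp\`ere equation, after which the argument above runs. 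This is exactly where the extra hypothesis of the paper's second part---that the two endpoints are non-degenerate energy minimizers, so that the Euler--Lagrange equation for $\cM$ restores the missing ellipticity---is used, and the conjecture in full generality stands or falls with removing it.
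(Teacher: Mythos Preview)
The statement you are attempting to prove is a \emph{conjecture} in the paper, not a theorem; the paper does not claim a proof of it in full generality, and indeed cites Berman's counter-example \cite{Ber} for geodesic rays. What the paper proves is the special case where the two endpoints are non-degenerate energy minimizers (Theorem~\ref{thm-intr-002}). Your proposal is therefore not a proof of the conjecture but an outline of a strategy, and you are candid about this in Step~3.

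That said, your Steps~1--2 track the paper's smooth-geodesic argument (Section~3) fairly closely: the use of the lower bound (\ref{intr-001}) in a parallelogram chart with slope determined by the gradient of $\dot\phi$, the $\sqrt{m}$-rescaling, and the conclusion $\dbar_X(\g^{(m)}/K^{(m)})\to 0$ are all as in the paper. One inaccuracy: you say the vanishing of the companion term $[i\ddbar\phi_a\wedge T_a]$ pins down time-independence of $V_t$. The paper does not use that term for this purpose; holomorphicity in $t$ is obtained separately, by observing that $\partial V_t/\partial\bar t$ is the complex gradient of the geodesic defect $c(\phi)=\ddot\phi-|\nabla\dot\phi|^2$, which vanishes identically.

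The genuine gap is in Step~3, and it is more serious than you indicate. You write that once non-degeneracy is established, smoothness follows ``by the regularity theory for the homogeneous complex Monge--Amp\`ere equation''. This is not available: there is no general regularity theorem taking a non-degenerate $C^{1,1}$ solution of HCMA to a smooth one. The paper gets fiberwise smoothness only because the energy-minimizer hypothesis forces each slice $\omega_{\vp_t}$ to satisfy the cscK equation (Theorem~\ref{wg-thm-001}), and then bootstraps time-regularity via the Chen--Feldman--Hu perturbation theorem and uniform Schauder estimates on the cscK system (Proposition~\ref{prop-wg-005}). Likewise, the non-degeneracy step itself (Proposition~\ref{prop-wg-002}) needs the $W^{1,2}$ control on $f_{\vp_t}^{1/2}$ supplied by Chen--Tian's weak K\"ahler--Ricci flow argument, which again uses that $\vp_t$ is an energy minimizer. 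Without the minimizer hypothesis neither ingredient is available, and the conjecture remains open.
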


We call that the Mabuchi functional is strictly convex along the geodesic, if the conjecture is true.
It is well known that this strict convexity holds along a smooth geodesic.
However, there are two difficulties that prevent the generalisation
from the smooth geodesic case to the $C^{1,1}$-geodesic case: degeneracy and lack of regularities.  

Very recently, Berman \cite{Ber} constructed a counter-example of this conjecture when $\cG$
is essentially a $C^{1,1}$-geodesic ray on the two sphere $S^2$, 
and the geodesic ray in this example is ``very degenerate" in the sense that its restriction to each fibre $X_t \simeq S^2 , t>0$
vanishes on an open trip near the equator of $S^2$.

However, the situation on a geodesic segment may be very different from a geodesic ray.
For instance, every $C^{1,1}$ geodesic segment must be non-degenerate on a toric manifold.  

The \textbf{key observation} in this part is that the degeneracy can never happen 
if the following conditions hold on a $C^{1,1}$ geodesic $\cG$:
\begin{enumerate}
\item the Mabuchi functional $\cM$ is linear along $\cG$;

\item the two boundaries of $\cG$ are non-degenerate;

\item the volume form radio $f_{\vp_t}: = \omega_{\vp_t}^n / \omega^n$ of the geodesic is in $W^{1,2}(X_t)$ for each $t\in [0,1]$,
where $\omega_{\vp_t}: = \cG|_{X_t}$.
\end{enumerate}

A more precise statement can be found in Proposition  (\ref{prop-wg-002}).
In fact, the above conditions indeed hold if we further require the boundaries of $\cG$ are energy minimizers of the Mabuchi functional. 
Moreover, the condition of being non-degenerate energy minimizer also implies that the metric has to be a smooth $cscK$ on each fibre (\cite{CT}, \cite{HZ}).

However, the fiberwise smoothness of the geodesic is not enough to conclude the strict convexity,
and we also need to establish the regularities of the geodesic along time direction. 
Fortunately, a small perturbation result of all the $C^{5-\a}$ geodesics is established recently by Chen-Feldman-Hu \cite{CFH},
and the $cscK$ equation gives us a chance to utilise their result to establish regularities in time direction. 
Finally we proved the following 

\begin{theorem}
\label{thm-intr-002}
Suppose $\cG$ is a $C^{1,1}$ geodesic connecting two nondegenerate energy minimizers.  
Then the Mabuchi functional $\cM$ is strictly convex along $\cG$.
\end{theorem}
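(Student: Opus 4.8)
The plan is to prove that, whenever $\cM$ is linear along $\cG$, the geodesic is forced to be non-degenerate and smooth; the strict convexity already proved for smooth geodesics in the first part of the paper (via the Hessian formula of Theorem~\ref{thm-intr-001}) then applies and shows that $\cG$ is generated by a holomorphic vector field.

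First I would observe that every slice of $\cG$ is an energy minimizer. Since both endpoints $\vp_0,\vp_1$ are energy minimizers, $\cM(\vp_0)=\cM(\vp_1)=\inf_{\cH}\cM$, and (using the convexity and continuity of $\cM$ along the $C^{1,1}$-geodesic from \cite{BB}, \cite{CLP}) the linearity hypothesis forces $\cM(\vp_t)\equiv\inf_{\cH}\cM$ for all $t\in[0,1]$. Next I would check the three conditions listed before Proposition~\ref{prop-wg-002}: (1) is the standing hypothesis; (2) holds because the boundaries are non-degenerate; and (3), the bound $f_{\vp_t}=\omega_{\vp_t}^n/\omega^n\in W^{1,2}(X_t)$, follows from the finiteness of the entropy term of $\cM$ along $\cG$ together with the linearity and the Hessian computation of \cite{BB}, \cite{CLP}. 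Proposition~\ref{prop-wg-002} then yields that $\cG$ is non-degenerate: $f_{\vp_t}>0$ on $X_t$ for every $t$, so each $\omega_{\vp_t}$ is a genuine K\"ahler form. Combined with the previous observation, every slice is now a non-degenerate energy minimizer, hence a smooth $cscK$ metric by the regularity theory of Chen--Tian \cite{CT} and He--Zeng \cite{HZ}; equivalently, on each fibre the blow-up analysis following Theorem~\ref{thm-intr-001} produces a fibrewise holomorphic vector field generating $\cG$ in the fibre directions.

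The remaining, and hardest, step is to upgrade this fibrewise smoothness to joint smoothness of $\cG$ on $X\times[0,1]$. A priori the geodesic equation $\cG^{n+1}=0$ degenerates in the $t$-variable and nothing improves on the $C^{1,1}$ regularity of \cite{C00}; the leverage is that, along $\cG$, the metrics $\omega_{\vp_t}$ solve the determined fourth-order elliptic $cscK$ equation $S(\omega_{\vp_t})=\bar S$ and are uniformly non-degenerate. I would feed this into the small-perturbation theorem for $C^{5-\a}$ geodesics of Chen--Feldman--Hu \cite{CFH}: starting from the smooth $cscK$ endpoints and using non-degeneracy to control the coupled $cscK$--geodesic system, one obtains that $\cG$ is $C^{5-\a}$, and then a standard elliptic bootstrap makes it $C^{\infty}$. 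Once $\cG$ is a smooth geodesic, the strict convexity of $\cM$ along smooth geodesics shows that the linearity of $\cM$ along $\cG$ forces $\cG$ to be generated by a holomorphic vector field, which is exactly Theorem~\ref{thm-intr-002}.

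The main obstacle is this last step: there is no regularity theory producing time-regularity for a degenerate complex Monge--Amp\`ere equation directly, so everything hinges first on removing the degeneracy via Proposition~\ref{prop-wg-002}, which turns the geodesic equation into one to which the perturbation analysis of \cite{CFH} applies, and second on propagating the $cscK$ structure from the endpoints to every interior slice so that this analysis has something to perturb from.
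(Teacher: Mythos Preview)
Your overall architecture matches the paper's: show every slice is an energy minimizer, invoke Proposition~\ref{prop-wg-002} to force non-degeneracy, use \cite{CT}/\cite{HZ} to make every slice a smooth $cscK$ metric, then use \cite{CFH} for time regularity and finish with the smooth case. But there is one genuine gap and one step that is too vague to count as an argument.

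The gap is your justification of condition~(3). You claim $f_{\vp_t}\in W^{1,2}(X_t)$ ``follows from the finiteness of the entropy term of $\cM$ along $\cG$ together with the linearity and the Hessian computation of \cite{BB}, \cite{CLP}.'' It does not. Entropy finiteness only says $\int f_{\vp_t}\log f_{\vp_t}<\infty$, and the Hessian formula in \cite{BB}, \cite{CLP} yields positivity of a push-forward current, not Sobolev regularity of the volume-form ratio. The paper obtains the needed estimate (in fact $f_{\vp_t}^{1/2}\in W^{1,2}$, which is exactly the $\alpha=1/2$ case of the hypothesis in Proposition~\ref{prop-wg-002}) from Chen--Tian's weak K\"ahler--Ricci flow argument, stated here as Proposition~\ref{thm-wg-001}, and that result \emph{requires} $\vp_t$ to be an energy minimizer. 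You have already proved this for every $t$, so the fix is immediate once you cite the correct input; but the reason you gave is wrong and would not close the argument.

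The vague step is the passage from fibrewise smoothness to joint $C^{5-\a}$ regularity. Chen--Feldman--Hu \cite{CFH} is a \emph{local} statement: it needs the two boundary potentials to be $C^5$-close. You must therefore show that for $t'$ near $t$ one has $|\vp_{t'}-\vp_t|_{C^5}<\ep$. The paper extracts this from the $cscK$ equation itself: uniform non-degeneracy plus the coupled system $\det(g+\d\dbar\vp_t)=e^{F_{\vp_t}}$, $\Delta_{\vp_t}F_{\vp_t}=\underline R$ gives a $t$-uniform $C^{5,\a}$ bound on $\vp_t$; then Arzel\`a--Ascoli together with the $C^0$ continuity of $t\mapsto\vp_t$ forces $C^5$ convergence of $\vp_{t_i}$ to $\vp_{t_0}$. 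Without this uniform a~priori estimate you cannot place yourself in the hypothesis of \cite{CFH}, so ``using non-degeneracy to control the coupled $cscK$--geodesic system'' needs to be made into exactly this bootstrap.
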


If we first apply the regularity theorem (\cite{CT}, \cite{HZ}) to the two energy minimizers on the boundary of $\cG$,
then this theorem reduces to prove the uniqueness of smooth cscK metrics (\cite{BB}, \cite{CPZ}).
However, unlike the previous ones, our new proof does not depend on the strict convexity of the so called $\mathcal{J}$-functional,
and we hope that this proof sheds some light to the ``intrinsic convexity" of the Mabuchi functional.

\textbf{Acknowledgment: }
The author is very grateful to Prof. M. P\u aun and Prof. B. Berndtsson who introduced this problem,
and he also wants to show his great thanks to Prof. J-P. Demailly, Prof. X.X. Chen and Prof. W. He for lots of useful discussion. 
Moreover, the author is supported by the ERC funding during writing this paper.

\section{The log-Bergman kernel}
Let $D$ be a pseudoconvex domain in $\bC_t \times \bC^n_z$, and let $\phi$ be a plurisubharmonic function in $D$.
For each $t$, the $n$-dimensional slice of $D$ is denoted by 
$$D_t: =\{z\in \bC^n |  \ (t,z)\in D \},$$ 
and the restriction of $\phi$ to each $D_t$ is 
$\phi_t(z): = \phi(t,z)$.
Consider the following Bergman space 
$$A_t: = A(D_t, e^{-\phi_t}), $$
consisting of all holomorphic functions in $D_t$, such that 
$$\int_{D_t} |h|^2 e^{-\phi_t} < \infty. $$
The Bergman kernel $K_t(\z, z)$ of $A_t$ for a point $z$ in $D_t$ is the unique holomorphic function of $\z$ satisfying 
$$ h(z) = \int_{D_t} h(\z) \ol{K_t(\z,z)} e^{-\phi_t(\z)} dV(\z),$$
for all function $h$ in $A_t$.

Berndtsson \cite{Bo} proved that the function $\log K_t(z,z)$ is plurisubharmonic, or identically equal to $-\infty$ in $D$. 
To see this, we will show a complete formula for the complex Hessian of this function on a product domain first.

\subsection{Product domain}
Let $V$ be a smoothly bounded strictly pseudoconvex domain in $\bC^n$, and $U$ be a domain in $\bC$.
Suppose $\phi$ is a smooth plurisubharmonic(Not necessarily being strictly plurisubharmonic!) function in a neighborhood of $V\times U$. 
Fix a point $z\in V$, and let $K_t(\cdot, z)$ be the Bergman kernel for $V$ with the weight function $\phi_t$.
Therefore, we have 
\begin{equation}
\label{bk-000}
h(z) = \int_{V} h(\z) \ol{K_t(\z,z)} e^{-\phi_t(\z)} dV(\z).
\end{equation}
Then the following observation is important 
\begin{equation}
\label{bk-001}
\Phi(t): = K_t(z,z) = \int_V K_t(\z,z)\ol{K_t(\z,z)} e^{-\phi_t}.
\end{equation}
By repeating Berndtsson's argument (Lemma 2.1, \cite{Bo}), it is easy to see 
that $K_t(\cdot, z)$ is also smooth as a function of $t$ for plurisubharmonic weight function $\phi$. 
Differentiate equation (\ref{bk-000}) in $t$ direction, and then the function 
$$  \d^{\phi}_t K_t: = e^{\phi} \frac{\d}{\d t} ( e^{-\phi} K_t ) $$
is orthogonal to each function $h$ in $A_t$. Furthermore, we define the following $(n,0)$ forms as 
$$k_t(\z): = K_t(\z,z) d\z^1\wedge\cdots\wedge d\z^n, $$
and 
$$ \d^{\phi}_t k_t: = \d^{\phi}_t K_t d\z^1\wedge\cdots\wedge d\z^n.$$

The $\dbar$ operator has closed range on a strictly pseudo-convex domain for any plurisubharmonic weight function.
(This is even true for any quasi-plurisubharmonic weight function whose curvature is bounded from below.)
Then there exists a unique $\dbar$-closed $(n,1)$ form $\a$ satisfying 
$$\d^{\phi}_t k_t = \dbar_X^*\a, $$ 
and $\a$ also smoothly depends on $t$ by a similar argument of $K_t$. 
Next, we can contract this $(n,1)$ form by the Euclidean metric, i.e. 
if $\a = \sum_{j=1}^n \a^j d\bar\z^j \wedge d\z$, then put $\g = \sum_{j=1}^n \a^j d\hat{\z}^j $.
Moreover, we have 
\begin{equation}
\label{bk-002}
\d^{\phi}_t k_t = \d^{\phi}_X \g, 
\end{equation}
and this is equivalent to the following equation 
\begin{equation}
\label{bk-003}
\d^{\phi} g =0, 
\end{equation}
where $g = k_t + dt\wedge\g$. Define a new $(n,n)$ form as 
$$T: = c_n g\wedge g e^{-\phi}. $$
Thanks to equation (\ref{bk-001}), the push forward of $T$ to $D_t = V$ equals to 
$$ [T]_{D_t}  = [c_n k_t\wedge \bar{k}_t]_{D_t} = K_t(z,z).$$

Berndtsson calculated the following formula 
\begin{equation}
\label{bk-004}
i\ddbar T = i\ddbar\phi\wedge T + c_n i \dbar g \wedge \ol{\dbar g} e^{-\phi}.
\end{equation}
Since $\dbar_X\g$ is primitive with respect to the Euclidean metric, we further have 
\begin{eqnarray}
\label{bk-005}
c_n i \dbar g \wedge \ol{\dbar g} &=& \left( \sum_{j,k\geq 1}^n \left| \frac{\d\a^j}{\d\bar\z^k} \right|^2 + \left| \frac{\d \a_0}{\d\bar \z_0}\right|^2 \right) idt\wedge d\bar t \wedge dV(\z)
\nonumber\\
&=&  ( |\dbar_X \g |^2 + | \dbar_t K_t |^2 ) i dt\wedge d\bar t\wedge dV(\z).
\end{eqnarray}
Therefore, we have the following after extending the current $T$ to $\bC^n$ (Lemma (4.1), \cite{Bo})
\begin{eqnarray}
\label{bk-006}
i\ddbar_t K_t(z,z) &\geq & [i\ddbar T]_{D_t}
\nonumber\\
&=& [i\ddbar \phi\wedge T]_{D_t} + \left( \int_{D_t} ( |\dbar_X \g |^2 + | \dbar_t K_t |^2 ) e^{-\phi_t} \right) idt\wedge d\bar t.
\end{eqnarray}

This is Berndtsson's calculation \cite{Bo} up to this stage, 
and then X. Wang \cite{Wang} generalised this formula to any pseudoconvex domain. 
However, we will proceed this computation as follows 
\begin{eqnarray}
\label{bk-007}
\ddbar_t\log K_t(z,z) &=& \Phi^{-2} \Big\{ \Phi\cdot \ddbar_t \Phi - \d_t\Phi\wedge \dbar_t \Phi\Big\}.
\end{eqnarray}
Then we claim the following 
$$\d_t\Phi (t) = \Big[ c_n g\wedge \ol{\dbar g} e^{-\phi} \Big]_{D_t}, $$
and the latter can be computed as 
\begin{eqnarray}
\label{bk-008}
&& \Big[ c_n (k_t + dt\wedge \g) \wedge ( dt\wedge \d_t \ol{k}_t   - d\bar t\wedge \d \bar\g) \Big]_{D_t}
\nonumber\\
&=& \Big[ k_t\wedge dt \wedge \ol{\dbar_{t}k_t} \Big]_{D_t} = (-1)^n \left( \int_{D_t} K_t \cdot \ol{\dbar_t K_t}e^{-\phi_t} \right) dt.
\end{eqnarray}
This is because none of the term in $\d_t\Phi(t)$ can contain $d \bar t$ direction. 
Moreover, we have 
\begin{equation}
\label{bk-009}
\d_t\Phi \wedge \ol{\d_t\Phi} = dt\wedge d\bar t \left|  \int_{D_t} K_t \cdot \ol{\dbar_t K_t} e^{-\phi_t} \right|^2
\end{equation}
Therefore, we have the identity  
\begin{equation}
\label{bk-010}
i\ddbar_t \log K_t(z,z) \geq \frac{[i\ddbar\phi\wedge T]_{D_t}}{[k_t\wedge\ol{k}_t]_{D_t}} + \frac{|| \dbar_X \g ||^2}{[k_t\wedge \bar{k}_t]_{D_t}} + i \k dt\wedge d\bar t,
\end{equation}
where $\k$ denotes by the following
\begin{equation}
\label{bk-011}
[k_t\wedge\bar{k}_t]^{-2}_{D_t}\left\{ \left(\int_{D_t} |K_t|^2 e^{-\phi_t} \right) \left( \int_{D_t} |\dbar_t K_t|^2 e^{-\phi_t} \right) - \left| \int_{D_t} K_t \cdot \ol{\dbar_t K_t} e^{-\phi_t}  \right|^2 \right\}.
\end{equation}
Thanks to the Cauchy-Schwarz Lemma, the term $\k$ is always positive. 
Therefore, we immediately see the following inequality
\begin{equation}
\label{bk-012}
i\ddbar_t\log K_t(z,z) \geq  \frac{|| \dbar_X \g ||^2}{K_t(z,z)},
\end{equation}
for all smooth weight function $\phi$ with $i\ddbar\phi \geq 0$. 

In fact, we can switch the derivatives and the push forward operator like $ \d_t \int T = \int \d T$,
only when the current $T$ is compactly supported in $\bC^n$. 
However, we have the following extension theorem of our currents by utilising the boundary condition.

Notice that it is true $\d\rho\wedge \g = 0 $ on the boundary, 
from solving the $\dbar$-Neumann boundary problem.
Then the claim is proved by a similar argument used in Lemma (4.1), \cite{Bo}. 
\begin{lemma}
\label{lem-bk-0001}
Let $\rho$ be a smooth real valued function in an open set $U$ in $\bC^n$.
Assume that $\d\rho\neq 0$ on $S: = \{z;\ \rho(z)=0 \} $ as a smooth real hypersurface.
Let $T$ be a real differential form of bidimension $(1,1)$ defined where $\rho<0$, 
with coefficients extending smoothly up to $S$. 

Assume that $\d\rho\wedge T$ vanishes on $S$,
and extend $T$ to a current $\tilde T$ in $\bC^n$ by putting $\tilde T =0$ where $\rho >0$. Then 
$$ i\d \tilde T = \chi_{\rho<0} i\d T. $$
\end{lemma}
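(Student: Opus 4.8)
The statement to prove is Lemma \ref{lem-bk-0001}: if $T$ is a real bidimension-$(1,1)$ form on $\{\rho<0\}$ whose coefficients extend smoothly up to the hypersurface $S=\{\rho=0\}$, and if $\d\rho\wedge T$ vanishes on $S$, then the trivial extension $\tilde T$ (zero on $\{\rho>0\}$) satisfies $i\d\tilde T=\chi_{\rho<0}\,i\d T$ as currents. The plan is to test both sides against an arbitrary smooth compactly supported test form of the complementary degree and reduce the difference to a boundary integral over $S$, then show that this boundary term is exactly $\int_S \d\rho\wedge T\wedge(\cdots)$ up to a nonzero factor, which vanishes by hypothesis. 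The essential point is that differentiating a function (or form) with a jump discontinuity across $S$ produces, besides the pointwise derivative, a distributional term supported on $S$ proportional to $\d\rho$ (a one-sided ``Stokes defect''), and the hypothesis $\d\rho\wedge T|_S=0$ is precisely what kills that defect term for the combination $\d\tilde T$.

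\medskip

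\textbf{Key steps.} First I would fix a smooth test $(n-1,n-1)$ form $\psi$ with compact support in $U$ and write
\begin{equation*}
\langle i\d\tilde T,\psi\rangle = -\langle i\tilde T, \d\psi\rangle = -\int_{\{\rho<0\}} i\,T\wedge \d\psi,
\end{equation*}
using that $\tilde T$ vanishes on $\{\rho\ge 0\}$. Next, since $T$ extends smoothly up to $S$, I would apply Stokes' theorem on the (smoothly bounded, after shrinking $U$ and using a partition of unity near $S$) region $\{\rho<0\}$:
\begin{equation*}
\int_{\{\rho<0\}} i\,\d(T\wedge\psi) = \int_{S} i\,T\wedge\psi,
\end{equation*}
with $S$ given its boundary orientation as $\d\{\rho<0\}$. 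Expanding $\d(T\wedge\psi)=\d T\wedge\psi + (-1)^{\deg T} T\wedge\d\psi$ and rearranging gives
\begin{equation*}
-\int_{\{\rho<0\}} i\,T\wedge\d\psi = \int_{\{\rho<0\}} i\,\d T\wedge\psi \ \pm\ \int_{S} i\,T\wedge\psi = \langle \chi_{\rho<0}\,i\d T,\psi\rangle \ \pm\ \int_S i\,T\wedge\psi,
\end{equation*}
so the whole lemma comes down to showing the boundary integral $\int_S i\,T\wedge\psi$ vanishes for every test $\psi$. The final step is the algebraic observation: on $S$, the restriction $i^*_S(T\wedge\psi)$ of the form $T\wedge\psi$ to the hypersurface equals (up to a nonzero constant depending on normalizing $\d\rho$) the restriction of $\frac{i\d\rho\wedge\dbar\rho}{|\d\rho|^2}\wedge T\wedge\psi$ — more precisely, because $\d\rho\neq 0$ on $S$, a top-degree form on $S$ is detected by wedging with $\d\rho$ (or $d\rho$), and $\d\rho\wedge T = 0$ on $S$ forces $i^*_S(T\wedge\psi)=0$. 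Hence $\int_S i\,T\wedge\psi=0$ and the two currents agree.

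\medskip

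\textbf{Main obstacle.} The routine-looking but genuinely delicate point is the last step: making rigorous the claim that ``$\d\rho\wedge T|_S=0$ implies $T\wedge\psi|_S=0$ for all $\psi$.'' One must be careful about what ``vanishes on $S$'' means — it is the pullback $i^*_S$ of a form, not just that certain coefficients vanish — and one must check that the hypothesis controls exactly the components of $T$ that survive pullback to $S$. The clean way is a local computation: near a point of $S$, choose coordinates (or just a local frame) so that $\rho$ is, say, $2\Re(w^1)$ up to higher order, decompose $T$ into a part containing $d\rho$ (equivalently $dw^1$ or $d\bar w^1$) and a ``tangential'' part; the hypothesis $\d\rho\wedge T|_S=0$ says the tangential-tangential block of $T$ vanishes on $S$, and that block is precisely what contributes to $i^*_S(T\wedge\psi)$. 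I would also need to confirm the orientation/sign bookkeeping in Stokes so that the coefficient is $+1$ as asserted, and to handle the reduction to a smoothly bounded region by a cutoff argument (the hypothesis $\d\rho\neq 0$ on $S$ guarantees $S$ is a genuine smooth hypersurface, so this is standard). Everything else is a direct transcription of the argument in Lemma (4.1) of \cite{Bo}, as the authors indicate.
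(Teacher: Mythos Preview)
Your approach is correct and matches the paper's: both reduce the identity to a boundary integral via integration by parts and kill that term using $\d\rho\wedge T|_S=0$. The paper simply does this in coordinates, testing against the spanning family $w\,d\bar z^k$ with $w\in C_c^\infty$; the divergence theorem then produces the boundary term $\int_S w\sum_j\rho_{,j}T_{j\bar k}\,dS/|\d\rho|$, which vanishes because the hypothesis reads $\sum_j\rho_{,j}T_{j\bar k}=0$ on $S$ --- no further pullback analysis is needed.

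One slip to correct: since $T$ has bidimension $(1,1)$, i.e.\ bidegree $(n-1,n-1)$, the current $\d\tilde T$ has bidegree $(n,n-1)$ and must be tested against $(0,1)$-forms, not $(n-1,n-1)$-forms. With $\psi$ of bidegree $(0,1)$ all of your formulas make sense: $T\wedge\psi$ is then $(n-1,n)$, so $d(T\wedge\psi)=\d(T\wedge\psi)$ automatically and Stokes applies as you wrote. Your local analysis of $i_S^*(T\wedge\psi)$ then recovers exactly the coefficient $\sum_k(\sum_j\rho_{,j}T_{j\bar k})\psi_k$ that the paper writes down directly.
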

\begin{proof}
The hypothesis on $T$ implies 
$$ \sum_j \rho_{,j} T_{j\bar k} = \rho c_k.$$

Let $w$ be a smooth function of compact support in $\bC^n$. Then we compute for each $k$
\begin{eqnarray}
\label{bk-0012}
\int_{\rho<0} i\d w \wedge d\bar z^k \wedge T &=& \int_{\rho<0} \sum w_{,j} T_{j\bar k}
\nonumber\\
&=& \int_{\rho=0} \sum_j w \rho_{,j} T_{j\bar k} dS/ |\d\rho| - \int_{\rho<0} \sum w \frac{\d T_{j\bar k}}{\d z^j},
\end{eqnarray}
but the boundary integration vanishes by our assumption, and we have 
\begin{equation}
\label{bk-00125}
\int_{\rho<0} i \d w \wedge d\bar z^k \wedge T = -\int_{\rho<0} w d\bar z^k \wedge \d T,
\end{equation}
and the result follows. 
\end{proof}

\subsection{Variation of the domain}
In the previous section, we proved that the $\log$-Bergman kernel function $\log K_t(z,z)$ 
is subharmonic in $t$ direction for any smooth plurisubharmonic weight function $\phi$ on a product domain. 
In this section, we will invoke Oka's trick of variation of the domain to compute the complex Hessian of $\log K_t(z,z)$ along other directions.
The idea runs as follows.

Let $F(t,z)$ be a function of two variables in $D$, and $a$ be a vector in $\bC^n_z$.  
Restrict this function on the one dimensional slice determined by the vector
$$v:= [1,a]$$
in $\bC_t \times \bC^n_z$, passing through the point $(z,t)$. That is to say, put a new function by translating the variables 
$$ F_a(t,z): = F(t, z+ta). $$

Then it is easy to see that the complex Hessian of $F$ in $v$ direction is equal to the complex Hessian of $F_a$ in $t$ direction, i.e. we have 
\begin{equation}
\label{bk-013}
\ddbar_t F_a = v^* \cdot (\ddbar F) \cdot v.
\end{equation}

Now we want to put this $F(t,z)$ to be our log-Bergman kernel function.
However, there is one important observation for Bergman kernels needed to be made. 

In fact, the translated Bergman kernel $K_t(z+ta, z+ta)$ is just the Bergman kernel at $z$ for the domain $D_t -ta$,
with the translated weight function. More precisely, translate the domain for each $t$-slice as $ D_{a,t}: = D_t - ta$,
and put $\phi_{a,t} (\z): =\phi_t(\z +ta) $. They naturally induce a new function space 
$$ A_{a,t}: = A (D_{a,t}, e^{-\phi_{a,t}}). $$
Moreover, there is a one-one correspondence between $A_{a,t}$ and $A_{t}$ as 
$$ h  \rightarrow h_{a}(\z): = h(\z + ta ),$$ since we have the following 
\begin{eqnarray}
\label{bk-014}
\int_{D_t} |h(\xi)|^2 e^{-\phi_t(\xi)} dV(\xi) &=& \int_{D_{a,t}} |h(\z+ ta)|^2 e^{-\phi_t(\z+ta)} dV(\z)
\nonumber\\
&=& \int_{D_{a,t}} |h_{a,t}(\z)|^2 e^{-\phi_{a,t}(\z)} dV(\z).
\end{eqnarray}

For later purpose, we begin with a \emph{parallelogram domain} first. 
Let $D$ be the following domain in $\bC_t\times \bC^n_z$ by defining each of its $t$-slice as 
\begin{equation}
\label{bk-0145}
D_t : = D_0 + ta,
\end{equation}
where $t$ is running over a small disk $U$ around the origin, and 
$D_0$ is a Euclidean ball $B_r(z)$ centered at $z$ with radius $r$. 
It is easy to see that the translated domain $D_{a,t}$ is precisely a product domain, i.e. 
$$D_{t,a} = D_0 $$ for each $t\in U$.
Then we denote $K_{a,t}(\z,z)$ by the Bergman kernel of the translated space $A_{a,t}$.
Moreover, we claim that the translated Bergman kernel is exactly equal to the Bergman kernel of the translated space as 
$$ K_t(\z + ta, z+ta) = K_{a,t}(\z,z). $$
This is because that we can compute the reproducing kernel as 
\begin{eqnarray}
\label{bk-015}
h_a(z) &=& \int_{D_{a,t}} h_a(\z) \ol{K_{a,t}(\z,z)} e^{-\phi_a (\z)} dV(\z)
\nonumber\\
&=& \int_{D_t} h(\xi) \ol{K_t (\xi, z+ta)} e^{-\phi_t(\xi)} dV(\xi)
\nonumber\\
&=& h(z+ta),
\end{eqnarray}
where $\xi: = \z +ta$. Then by the uniqueness of the Bergman kernel, we proved our claim. 

Therefore, the next goal is to compute $\ddbar_t \log K_{a,t}(z,z)$ on the product domain $U\times D_0 $.
In order to do this, we need to investigate the relation between the two $(n-1,0)$ form, $\g_{a}$ and $\g$ first.

Recall that the $(n-1,0)$ form $\g_a$ is uniquely determined by the equation
\begin{equation}
\label{bk-016}
\d^{\phi_a}_X \g_a = \d^{\phi_a}_t k_{a,t}, 
\end{equation}
and the condition that $\dbar_X \g_{a}$ is primitive with respect to the Euclidean metric.  
Put  
$$ \g_2(\z): = a \lrcorner k_{a,t} (\z )= \sum_{j=1}^n  K_t(\z + ta, z+ta) a^j d\hat{\z}^j.$$
Here $a$ is viewed as a vector field with constant coefficient on $D$,
and then we can prove the following 

\begin{lemma}
\label{bk-lem-001}
With above notations, we can decompose the $(n-1,0)$ from as $\g_a = \g_1 + \g_2$, such that $\g_2$ is $\dbar$-closed, 
and the pull back of $\g_1$ on $D_t$ satisfies 
\begin{equation}
\label{bk-017}
\d^{\phi}_X \g_1 (\xi) = \d^{\phi}_t k_{t}(\xi),
\end{equation}
where $\xi = \z +ta$.
Moreover, $\dbar_X \g_1$ is also primitive with respect to the Euclidean metric.
\end{lemma}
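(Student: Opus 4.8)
The plan is to produce the decomposition $\g_a = \g_1 + \g_2$ by \emph{defining} $\g_2 := a \lrcorner k_{a,t}$ as in the statement, then setting $\g_1 := \g_a - \g_2$ and verifying that $\g_1$ has the two claimed properties. The natural first step is to understand $\g_2$ directly: since $k_{a,t}(\z) = K_t(\z+ta, z+ta)\, d\z^1\wedge\cdots\wedge d\z^n$ and $a$ has constant coefficients, contracting with $a$ just drops one of the $d\z^j$'s, so $\g_2 = \sum_j K_t(\z+ta,z+ta)\,a^j\, d\hat\z^j$. Because $\z \mapsto K_t(\z+ta, z+ta)$ is holomorphic in $\z$ on $D_{a,t}$ (it is, up to the fixed second argument, a holomorphic reproducing kernel in its first slot), each coefficient is holomorphic, hence $\dbar_X \g_2 = 0$. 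That gives the $\dbar$-closedness of $\g_2$ immediately, and it also shows that $\dbar_X\g_1 = \dbar_X\g_a$, so the primitivity of $\dbar_X\g_1$ is inherited from the primitivity of $\dbar_X\g_a$, which was part of the defining conditions of $\g_a$ recalled in \eqref{bk-016}.

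The substance of the lemma is therefore equation \eqref{bk-017}, i.e. that $\d^\phi_X\g_1 = \d^\phi_t k_t$ after the translation $\xi = \z + ta$. The key step is to compare the defining equation \eqref{bk-016} for $\g_a$, namely $\d^{\phi_a}_X \g_a = \d^{\phi_a}_t k_{a,t}$, with the corresponding equation for $\g$ (equation \eqref{bk-002}) on $D_t$, using the translation dictionary already set up in \eqref{bk-014}–\eqref{bk-015}. Here one must be careful: $\phi_a(t,\z) = \phi(t,\z+ta)$ depends on $t$ both through the first slot and through the translation, so $\d_t$ acting on $\phi_a$-weighted objects picks up an extra term coming from $\d/\d t$ of $\z+ta$, i.e. a term involving the directional derivative $a\lrcorner\,(\cdot)$ along $\z$. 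Concretely, for a $t$-dependent form $u$ on $D_t$ with translate $u_a(\z) = u(\z+ta)$, one has $\d^{\phi_a}_t u_a = (\d^\phi_t u)_a + $ (a term of the form $\mathcal{L}_a$-type correction). Tracking this through $k_{a,t}$ and through the operator $\d^{\phi_a}_X$ (which translates cleanly, since $\d_X$ and the weight both just get their arguments shifted), one finds that the extra term produced on the right-hand side of \eqref{bk-016} is exactly $\d^{\phi_a}_X\g_2$ — equivalently, $\d^\phi_X$ applied to $a\lrcorner k_t$ matches the translation-derivative correction to $\d^\phi_t k_t$. Subtracting then yields $\d^{\phi_a}_X(\g_a - \g_2) = (\d^\phi_t k_t)_a$, which is \eqref{bk-017} after untranslating.

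I expect the main obstacle to be bookkeeping rather than conceptual: getting the Leibniz/chain-rule term from differentiating the translation $\z \mapsto \z + ta$ in $t$ to land precisely as $\d^\phi_X(a\lrcorner k_t)$ and not, say, off by a sign, a factor, or a $\dbar$-exact ambiguity. A clean way to organize this is to work with the combined $(n,0)$-plus-$dt\wedge(n-1,0)$ object $g_a = k_{a,t} + dt\wedge\g_a$ and use the characterization \eqref{bk-003}, $\d^{\phi_a} g_a = 0$, together with its translated counterpart $\d^\phi g = 0$: the identity $g = g_a - dt\wedge\g_2$ under translation (since $k_t$ translates to $k_{a,t}$ and the $dt\wedge\g_2$ piece accounts precisely for the Cartan-formula term $dt\wedge(a\lrcorner k_{a,t})$ coming from pulling back along $(t,\z)\mapsto(t,\z+ta)$) reduces the whole computation to checking that $\d^\phi(dt\wedge\g_2) = 0$ on $D_t$, i.e. that $\g_2$ is $\dbar$-closed and $\d^\phi_X\g_2$ contributes nothing new — which is the observation made above. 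Once the translation identity for $g$ is set up correctly, uniqueness of $\g_a$ (from the $\dbar$-closed-range solvability plus the primitivity normalization) forces the decomposition, and the lemma follows.
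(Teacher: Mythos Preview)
Your primary argument --- defining $\g_2 := a\lrcorner k_{a,t}$, observing $\dbar_X\g_2=0$ from holomorphicity of the kernel in its first slot, and then expanding $\d^{\phi_a}_t k_{a,t}$ by the chain rule so that the extra translation term is identified as exactly $\d^{\phi_a}_X\g_2$ --- is correct and is precisely the paper's proof (equations \eqref{bk-018}--\eqref{bk-021}); the primitivity of $\dbar_X\g_1$ follows, as you say, from $\dbar_X\g_2=0$.

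One correction to your proposed ``clean'' reorganization via $g$: the claim that everything reduces to checking $\d^{\phi}(dt\wedge\g_2)=0$ is not right. One has $\d^{\phi_a}(dt\wedge\g_2) = -\,dt\wedge \d^{\phi_a}_X\g_2$, and by \eqref{bk-019} this equals $-\,dt\wedge (a\cdot\d^{\phi}_X K_t)\,d\z$, which is generically nonzero. The correct pullback identity is $F^*g = k_{a,t} + dt\wedge(\g_2 + \g_{\mathrm{transl}})$, so $F^*g$ and $g_a$ coincide \emph{iff} $\g_a = \g_2 + \g_{\mathrm{transl}}$, which is exactly the content of the lemma; you then need the uniqueness of $\g_a$ (which you do invoke) to close the loop. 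That route is valid but not actually shorter, since establishing the pullback identity and the uniqueness amounts to the same bookkeeping as the direct computation you already outlined.
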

\begin{proof}
By the previous identification, we see that the $(n,0)$ form is equal to 
$$k_{a,t}(\z) = K_t(\z + ta, z+ta) d\z^1\wedge\cdots\wedge d\z^n,$$
and then we can compute as 
\begin{eqnarray}
\label{bk-018}
\d^{\phi_a}_t k_{a,t} &=& \d_t k_{a,t} - \d_t\phi_a\cdot k_{a,t} 
\nonumber\\
&=& \Big\{ (\d_t K_t)(\z+ ta,z+ta) + \d_X K_t \cdot a \Big\} d\z
\nonumber\\
&-&  \Big( \d_t\phi_t (\z+ta) + \d_X\phi_t \cdot a \Big) K_{t} d\z
\nonumber\\
&=& \Big\{ \d^{\phi}_t K_t(\z+ta, z+ta) + \d^{\phi}_X K_t(\z+ta, z+ta) \cdot a \Big\} d\z
\end{eqnarray}
This is because the Bergman kernel is anti-holomorphic on the second variable, i.e. $\d_{\ol{X}} K_t =0$. 
Then, it is easy to see from the definition of $\g_2$ that we have 
\begin{eqnarray}
\label{bk-019}
\d^{\phi_a}_X \g_2 &=& a \cdot \d_X K_{t} d\z - \d_X \phi(\z+ta)\wedge ( a\lrcorner k_{a,t} )
\nonumber\\
&=& a\cdot \d^{\phi}_X  K_t (\z +ta, z+ta) d\z.
\end{eqnarray}

Therefore, if we put $\g_1: = \g_a - \g_2$, then equation (\ref{bk-018}) and (\ref{bk-019}) gives the following 
\begin{equation}
\label{bk-020} 
\d^{\phi_a}_X \g_1 (\z) = \d^{\phi}_X \g_1 (\z +ta) =
 \d^{\phi}_t K_t(\z+ta, z+ta) \cdot d\z,
\end{equation}

More interestingly, observe that $\g_2$ is indeed holomorphic since we have 
\begin{equation}
\label{bk-021}
\dbar_X \g_2 = \sum_j \dbar_X K_t \wedge a^j d\hat\z^j = 0.
\end{equation}
Then we have $\dbar_X ( \g_1 + \g_2) = \dbar_X \g (\z+ta) $, and our result follows. 

\end{proof}

Based on this Lemma (\ref{bk-lem-001}), we will not distinguish $\g_a$ and $\g_1$ from now on. 
Combining it with equation (\ref{bk-010}), we proved the following inequality 
\begin{prop}
\label{bk-prop-001}
The log-Bergman kernel of a parallelogram domain $D$ has the following complex Hessian lower bound at one point 
\begin{equation}
\label{bk-022}
i\ddbar \log K_t (v,\bar v) \geq \frac{ [i\ddbar\phi_a \wedge T_a]_{D_{a,t}}}{ K_{a,t}(z,z)} + \frac{|| \dbar_X \g_1 ||^2}{ K_{a,t}(z,z)},
\end{equation}
for some positive $(n,n)$ form $T_a$, and $v$ is any tangent vector at this point. 
\end{prop}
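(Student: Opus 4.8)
\textbf{Proof proposal for Proposition \ref{bk-prop-001}.}
The plan is to assemble the statement from the two computations already carried out in the excerpt: the product-domain Hessian estimate (equation (\ref{bk-010})) and the comparison of the forms $\g_a$ and $\g_1$ furnished by Lemma (\ref{bk-lem-001}). First I would recall the identification established in equation (\ref{bk-015}), namely $K_t(z+ta,z+ta)=K_{a,t}(z,z)$, so that the function $F(t,z):=\log K_t(z,z)$ satisfies $F_a(t,z)=\log K_{a,t}(z,z)$ after the translation $F_a(t,z):=F(t,z+ta)$. By equation (\ref{bk-013}), the complex Hessian of $F$ in the direction $v=[1,a]$ at the point $(t,z)$ equals $\ddbar_t F_a$ at that point, so it suffices to bound $\ddbar_t\log K_{a,t}(z,z)$ from below on the \emph{product} domain $U\times D_0$ with weight $\phi_a$.

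Next I would apply the product-domain inequality (\ref{bk-010}) verbatim, but with $\phi$ replaced by $\phi_a$, with the slice $D_t$ replaced by $D_{a,t}=D_0$, with $k_t$ replaced by $k_{a,t}$, and with $\g$ replaced by $\g_a$; this is legitimate because the derivation of (\ref{bk-010}) only used that the ambient domain is a product and the weight is smooth (here $\phi_a$ is smooth strictly psh since $\phi$ is), together with the extension Lemma (\ref{lem-bk-0001}), whose boundary hypothesis $\d\rho\wedge\g=0$ holds for $\g_a$ by the $\dbar$-Neumann argument quoted before that lemma. Writing $T_a:=c_n\,g_a\wedge\ol{g_a}\,e^{-\phi_a}$ with $g_a=k_{a,t}+dt\wedge\g_a$, one gets exactly
\begin{equation*}
i\ddbar_t\log K_{a,t}(z,z)\ \geq\ \frac{[i\ddbar\phi_a\wedge T_a]_{D_{a,t}}}{[k_{a,t}\wedge\ol{k_{a,t}}]_{D_{a,t}}}+\frac{\|\dbar_X\g_a\|^2}{[k_{a,t}\wedge\ol{k_{a,t}}]_{D_{a,t}}}+i\k_a\,dt\wedge d\bar t,
\end{equation*}
where $\k_a\ge 0$ by Cauchy--Schwarz. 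Since $[k_{a,t}\wedge\ol{k_{a,t}}]_{D_{a,t}}=K_{a,t}(z,z)$ (the same identity as below equation (\ref{bk-003}), applied in the translated picture), and $\k_a\ge 0$, I would drop the last term and record the remaining two terms.

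Finally I would replace $\g_a$ by $\g_1$: Lemma (\ref{bk-lem-001}) gives $\g_a=\g_1+\g_2$ with $\g_2$ holomorphic, hence $\dbar_X\g_2=0$ and $\dbar_X\g_a=\dbar_X\g_1$, so $\|\dbar_X\g_a\|^2=\|\dbar_X\g_1\|^2$. Combining, and using (\ref{bk-013}) to read the left side as $i\ddbar\log K_t(v,\bar v)$, yields precisely (\ref{bk-022}); positivity of $T_a$ is immediate from its definition as $c_n$ times $g_a\wedge\overline{g_a}$ weighted by $e^{-\phi_a}$, with $c_n$ chosen so that $c_n\,\beta\wedge\overline\beta\ge 0$ for $(n+1,0)$-forms $\beta$. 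The main obstacle I anticipate is bookkeeping rather than conceptual: one must be careful that the extension Lemma (\ref{lem-bk-0001}) genuinely applies to the translated current $T_a$ on the boundary $\d D_0$ (the boundary condition must be checked for $\g_a$, not merely for $\g$), and that the strict plurisubharmonicity of $\phi_a$ — not just plurisubharmonicity — is what guarantees closed range of $\dbar$ on $D_0$ uniformly in $t$ so that $\g_a$ exists, is unique, and depends smoothly on $t$. Granting these, the proof is a direct concatenation of the ingredients already in place.
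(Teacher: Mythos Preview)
Your proposal is correct and follows essentially the same route as the paper: translate to the product domain $U\times D_0$ via (\ref{bk-013}) and (\ref{bk-015}), apply the product-domain estimate (\ref{bk-010}) with weight $\phi_a$, drop the nonnegative $\k_a$ term, and invoke Lemma~\ref{bk-lem-001} to replace $\dbar_X\g_a$ by $\dbar_X\g_1$. One cosmetic slip: $g_a=k_{a,t}+dt\wedge\g_a$ is an $(n,0)$-form on the total space, not $(n+1,0)$, so the positivity of $T_a=c_n\,g_a\wedge\ol{g_a}\,e^{-\phi_a}$ comes from the standard sign convention for $(n,0)$-forms.
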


\section{Smooth  geodesics}
The goal is to prove that a geodesic $\cG$ is generalized from a holomorphic vector field $V$ if the Mabuchi functional $\cM$ is linear along it. 
In fact, we have known what this vector field should be before hand. 
Suppose that the geodesic $\cG$ stands for the following metric on $X\times [0,1]\times S^1$
$$ \cG:= g_{t\bar t} dt\wedge d\bar t + g_{t\bar\b} dt\wedge d\bar z^{\b} + g_{\a\bar t}dz^{\a}\wedge d\bar t + g_{\a\bar\b} dz^{\a}\wedge d\bar z^{\b}. $$
Then it is a $S^1$-invariant solution of the so called \emph{Homogeneous complex Monge-Amp\`ere} (HCMA) equation,
 i.e. $\cG^{n+1} = 0$ on $X\times [0,1]\times S^1$. Moreover, this solution is uniquely determined if the boundary value is specified. 
In fact, we can also interpret the geodesic as such a solution on $X\times \Sigma$,
where $\Sigma$ is an annulus domain in $\bC$.

A local potential of the metric $\cG$ is a plurisubharmonic function $\phi$, i.e. we have 
$i\ddbar\phi = \cG$ in an open neighborhood of point $(z,t)\in X\times [0,1]\times S^1$.
Now the HCMA equation implies the following 
\begin{equation}
\label{sc-001}
c(\phi): = \ddot{\phi} - |\nabla \dot{\phi} |^2_g =0.
\end{equation}
Then our $S^1$-invariant vector field on $X\times [0,1] \times S^1$ can be written locally as 
$$ V_t := \frac{\d}{\d t} - \phi^{\bar\b\a}\dot{\phi}_{\bar\b} \frac{\d}{\d z^{\a}}. $$
In the following, the first goal is to prove that this vector field is holomorphic along each fiber $X\times \{ t \}$.

\subsection{The Mabuchi functional}
In this sub-section, we will assume the geodesic $\cG$ is smooth and non-degenerate along each fiber. 
The so called \emph{Mabuchi functional} or \emph{K-energy} is defined on the space $\cH$ as 
$$\cM (\vp): = \underline R \cE - \cE^{Ric\omega} + H, $$
where $\underline R$ is the average of the scalar curvature, and the first energy $\cE$ is 
$$ \cE (\vp): = \frac{1}{n+1} \sum_{i=0}^n \int_X \varphi \omega^i \wedge \omega_{\vp}^{n-i}.  $$
The second energy $\cE^{\a}$ is defined for any closed $(1,1)$ form $\a$ as
$$ \cE^{\a} (\vp): =  \sum_{i=0}^{n-1} \int_X \vp \omega^{i} \wedge \omega_{\vp}^{n-i-1}\wedge \a,$$
and the entropy part is defined as 
$$ H(\vp): = \int_X \left( \log\frac{\omega_{\vp}^n}{\omega^n} \right) \omega_{\vp}^n. $$

On a geodesic $\cG$, the complex Hessian of the Mabuchi functional is basically determined by the entropy part, 
and we usually write 
$$\cM(\varphi_t) \sim \int_X  \log\omega_{\varphi_t}^n\cdot\omega_{\varphi_t}^n,$$
where $\omega_{\varphi_t}: = \omega + i\ddbar_{X}\varphi_t$ is the restriction of the geodesic $\cG|_{X\times\{t\}}$ to the fiber.
Put $ \Psi_t: = \log \omega_{\varphi_t}^n$, and then it can be viewed as 
a metric on the canonical line bundle $K_X$ varying smoothly with respect to $t$. 

Therefore, its curvature $i\ddbar\Psi_t$ is a globally defined closed $(1,1)$ form,
and the complex Hessian of the Mabuchi functional along the geodesic $\cG$ 
can be computed as in \cite{BB}, \cite{CLP}
\begin{equation}
\label{sc-002}
i\ddbar_t \cM (\varphi_t) = [ i\ddbar\Psi\wedge \rho^n ]_{X_t}.
\end{equation}

In order to investigate the volume form $\omega^n_{\varphi_t}$, 
it is necessary to study the relation between the local Bergman kernels 
and the Monge-Amp\`ere mass of a plurisubharmonic function. 

Let $B_r(z_0)\subset \bC^n$ be the Euclidean ball centered at $z_0$ with radius $r$, 
and $\phi$ be a plurisubharmonic function in a neighborhood of $B_r(z_0)$. 
Let $K^{(m)}(\z,z)$ be the Bergman kernel of the space 
$$A_m: = A (B_r(z_0), \phi_m), $$ where $\phi_m: = m\phi$.
Then we call $K^{(m)}(\z,z)$ as the \emph{$m$-Bergman kernel} of the ball $B_r(z_0)$
with the weight function $\phi$.
Moreover, put $\b_m$ to be the radio of the $m$-Bergman kernel as 
$$\b_m: = \frac{c_n}{m^n} K^{(m)}(z,z) e^{-\phi_m(z)}. $$
Then it is well known that the following convergence is true
\begin{equation}
\label{sc-003}
\lim_{m\rightarrow \infty}\b_m (z) = \frac{(i\ddbar\phi)^n}{dV} (z).
\end{equation}
This convergence is smooth provided that $\phi$ is smooth up to the boundary.

\begin{rem}
\label{rem-sc-001}
Suppose the plurisubharmonic function $\phi$ has only $C^{1,1}$ regularities. 
Then Berman-Berndtsson \cite{BB} proved that 
the positive measure $d\mu_m: = \b_m dV $ is uniformly bounded from above.
More precisely, there exists a uniform constant $C(E,\phi)$ only depends on the domain $E$ 
and the upper bound of $\ddbar\phi$, such that it satisfies 
$$ d\mu_m \leq C(E,\phi) dV,$$
and the convergence (\ref{sc-003}) is in $L^1$ with respect to the Lesbegue measure. 
\end{rem}

Fix a point $p$ on the product manifold $X\times \Sigma$, 
and let $\Omega$ be a local coordinate system around $p$.
Assume $D\subset \Omega$ is a parallelogram domain defined as in equation (\ref{bk-0145}),
such that $p\in D_0$ is the center of the ball $B_r$.
On each slice $D_t$, we have the convergence as 
\begin{equation}
\label{sc-004}
\b_{m,t} (z) = \frac{c_n}{m^n} K^{(m)}_t(z,z) e^{-\phi_m} \rightarrow (i\ddbar_X \phi_t)^n(z).
\end{equation}
Moreover, this convergence is also smooth along $t$-direction. 
This is because we can write the $m$-Bergman kernel in its expansion form for a fixed point $z$ 
$$ K^{(m)}_t (z,z)  = a_0(t) m^n + a_1(t) m^{n-1} + \cdots + a_n(t) $$
and each coefficient $a_i$ smoothly depends on $t$ since the domain and the weight are both smooth in $(z,t)$.
Therefore, the following $(n+1,n+1)$ form 
\begin{equation}
\label{sc-005}
 \Big\{ i\ddbar \log K^{(m)}_t- i\ddbar\phi_m \Big\} \wedge (i\ddbar\phi)^n  = i\ddbar \log K^{(m)}_t \wedge (i\ddbar\phi)^n
\end{equation}
converges to the $(n+1,n+1)$ from
$$ T: = i\ddbar \log \omega_{\varphi_t}^n \wedge (i\ddbar\phi)^n $$
smoothly on $D$. 
Moreover, observe that the kernel of the $(1,1)$ form $\ddbar\phi$ is one dimensional,
and it is exactly generated by the vector field $v$. Then equation (\ref{sc-005}) can be re-written as 
\begin{equation}
\label{sc-006}
\ddbar \log K_t^{(m)} (v,\bar v) idt\wedge d\bar t \wedge (i\ddbar\phi)^n.
\end{equation}

Fix a point $p\in X\times \Sigma$, Proposition (\ref{bk-prop-001}) implies that the function 
$\ddbar \log K^{(m)}_t (v,\bar v) $ is non-negative for every $m$ at this point, 
weighted on any smooth function $\phi$ with $i\ddbar\phi \geq 0$. 
Therefore, the $(n+1,n+1)$ form $T$ is also positive at this point, by letting $m\rightarrow\infty$.

%\begin{corollary}
%\label{sc-cor-001}
%The Mabuchi functional $\cM$ is convex along any smooth sub-geodesic $\cG$.
%\end{corollary}
%\begin{proof}
%If the complex Hessian of $\ddbar\phi(p)$ at the point $p$ is strictly postive, then there is nothing to prove.
%On the other hand, if the kernel of this Hessian has dimension larger than one, then the $(n+1,n+1)$ form in equation (\ref{sc-005}) equals to zero.  
%Moreover, if the kernel has exactly one dimension, then everything reduces to the case as in equation (\ref{sc-006}). 
%\end{proof}

Suppose a non-degenerate geodesic $\cG$ has only $C^{1,1}$ regularities on $X\times \Sigma$. 
The positivity is proved in a similar way.
In fact, put the following positive currents as 
$$T_m: = i\ddbar \log \b_{m,t} \wedge (i\ddbar \phi)^n. $$
 
We claim that $T_m$ also converges to 
$T: = i\ddbar \log\omega_{\varphi_t}^n \wedge (i\ddbar\phi)^n$
in the current sense, by applying the Dominant Convergence Theorem(DCT). 

In order to prove the claim, we need to provide one subsequence of $\b_{m}$, 
such that it converges to the limit almost everywhere on $D$. 
But we have the following well known lemma 
\begin{lemma}
\label{lem-sc-add-001}
Let $f_m$ be a sequence of positive $L^{\infty}$ function on $D$.
Assume $\sup_D |f_m|$ is uniformly bounded,
and $f_m$ converges to some $f\in L^{\infty}(D)$ in distributional sense.
Then $f_m $ converges to $f$ almost everywhere on $D$ after passing to a subsequence.
\end{lemma}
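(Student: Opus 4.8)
The plan is to prove that a uniformly bounded sequence $f_m \in L^\infty(D)$ converging to $f$ in the distributional sense must converge to $f$ almost everywhere along some subsequence. I would first upgrade the distributional convergence to weak-$*$ convergence in $L^\infty(D) = (L^1(D))^*$, which is where the uniform bound $\sup_D |f_m| \le C$ does its work. Indeed, the hypothesis that $f_m \to f$ against test functions, combined with the density of $C_c^\infty(D)$ in $L^1(D)$ and the uniform $L^\infty$ bound (hence uniform $L^1$ bound on any bounded set), lets me pass from convergence against smooth test functions to convergence against all of $L^1(D)$. So for every $g \in L^1(D)$ one has $\int_D f_m g \to \int_D f g$.

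The key mechanism is then to pass from weak-$*$ convergence of $\{f_m\}$ to strong convergence of \emph{Cesàro-type averages}, and only afterwards extract an a.e.-convergent subsequence. Concretely I would invoke the Banach--Saks-type principle in the following elementary form: by Mazur's lemma (or a direct convexity argument), from the weak-$*$ convergent sequence $f_m - f \rightharpoonup 0$ one extracts a subsequence whose arithmetic means converge \emph{strongly} in $L^2(D)$ — here the uniform $L^\infty$ bound guarantees that all $f_m$ lie in $L^2(D)$ on the bounded pieces of $D$, so the Hilbert-space Banach--Saks theorem applies directly. Strong $L^2$ convergence of the averages then yields a further subsequence of the averages converging pointwise almost everywhere on $D$.

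The remaining step is to descend from a.e. convergence of the averages back to a.e. convergence of the original $f_m$ themselves. This is where I expect the main obstacle, and it is precisely the point where the uniform boundedness and an additional telescoping argument must cooperate: averages can converge a.e. even when the underlying terms do not, so one cannot conclude directly. The honest route is to iterate a diagonal extraction — apply the averaging argument to successive tails $\{f_m\}_{m \ge N}$ and use that the weak-$*$ limit of every tail is still $f$, so the averaged limits pin down $f$ on a common full-measure set — and then exploit the uniform bound to control the oscillation $|f_{m_k} - f|$ through a Borel--Cantelli estimate once one has shown $\int_D |f_{m_k} - f|\,dV \to 0$ along the diagonal subsequence. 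In fact the cleanest packaging is: uniform boundedness plus distributional convergence forces $f_m \to f$ in $L^1_{loc}(D)$ (test against approximations of the identity and use dominated convergence to control the tails), and $L^1_{loc}$ convergence is exactly the classical hypothesis from which an a.e.-convergent subsequence is extracted by the standard Borel--Cantelli argument. I would therefore organize the proof so that the genuine content is the implication ``uniformly bounded $+$ distributional convergence $\implies$ $L^1_{loc}$ convergence,'' with the final a.e. extraction being routine.
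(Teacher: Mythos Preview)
Your proposed ``cleanest packaging'' --- the implication ``uniformly bounded $+$ distributional convergence $\Rightarrow$ $L^1_{\mathrm{loc}}$ convergence'' --- is false, and this is exactly where the argument breaks. Take $D = (0, 2\pi)$ and $f_m(x) = 1 + \sin(mx)$. These are positive, uniformly bounded by $2$, and converge distributionally (indeed weak-$*$ in $L^\infty$) to $f \equiv 1$ by Riemann--Lebesgue. But $\int_0^{2\pi} |f_m - 1|\,dx = \int_0^{2\pi} |\sin(mx)|\,dx = 4$ for every $m$, so there is no $L^1_{\mathrm{loc}}$ convergence. Worse, no subsequence of $f_m$ converges a.e.\ to $1$: if $\sin(m_k x) \to 0$ on a set $E$ of positive measure, then by dominated convergence $\int_E \sin^2(m_k x)\,dx \to 0$, yet $\int_E \sin^2(m_k x)\,dx = \tfrac{1}{2}|E| - \tfrac{1}{2}\int_E \cos(2m_k x)\,dx \to \tfrac{1}{2}|E| > 0$ by Riemann--Lebesgue. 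Your own hesitation at step~3 (``averages can converge a.e.\ even when the underlying terms do not'') was the correct instinct; neither the Banach--Saks/Mazur route nor the mollifier-plus-dominated-convergence route can close this gap, because the conclusion simply does not follow from the stated hypotheses.

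In fact this counterexample shows that the lemma \emph{as stated in the paper} is false; the paper gives no proof either, merely calling it ``well known.'' The paper's application survives because, in the situation where the lemma is invoked, the Berman--Berndtsson result already supplies $L^1$ convergence of $\beta_{m,t}$ on each slice $D_t$, and the distributional convergence on $D$ is then deduced from this via Fubini and dominated convergence. If one strengthens the hypothesis of the lemma to $L^1_{\mathrm{loc}}$ convergence --- which is what is actually available --- the conclusion becomes the standard fact that $L^1$ convergence admits an a.e.-convergent subsequence. So the honest repair is to restate the lemma with $L^1_{\mathrm{loc}}$ convergence assumed, after which your final extraction step is routine and correct.
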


Put $f_{m}: = \b_{k}$ and $f: = \frac{\omega^n_{\varphi_t}}{dV}$. The first condition in above Lemma is satisfied. 
Moreover, thanks to the Remark (\ref{rem-sc-001}), 
the function $f_{m,t}$ converges to $f_t$ in $L^1$ on each slice $D_t$.
Therefore, the second condition is also satisfied by Fubini's Theorem and DCT.

\subsection{Approximation vector fields}
Now assume the Mabuchi functional $\cM$ is linear along the geodesic.
Then the push-forward of the positive $(n+1,n+1)$ form $T$ to $\Sigma$ is zero. 
%Therefore, this form $T$ actually converges to zero at each point $p\in D_t$. 
%(equation (\ref{sc-006})) to $\Sigma$
But this implies that the following non-negative function 
$$ \ddbar\log \omega_{\varphi_t}^n (v,\bar v) $$ 
is zero everywhere on each fiber $X_t$. 
(From this equation, one can prove $v$ is holomorphic by direct computation, but it needs at least $C^4$ regularities of the geodesic, \cite{CLP})

At a point $p\in X_t$, we can view the vector field $v_0$ as a vector in $\bC_t\times \bC^n_z$, i.e. put as before
$$v_0 = [1,a].$$
Then consider a small parallelogram domain $D$ centered at $p$ induced by the vector $a$ as defined in equation $(\ref{bk-0145})$.
Let $K^{(m)}_t$ be the $m$-Bergman kernel of the domain $D_t$ with the weight function $\phi$.
Then we have at this point $p$
%Therefore, converges to zero, i.e. we have for any cut-off function $\chi$ supported around a point $p\in X_t$ 
%\begin{equation}
%\label{sc-007}
%\int_{X_t} \chi \ddbar \log K_t^{(m)} (v,\bar v) \omega_{\varphi_t}^n \rightarrow 0,
%\end{equation}
%as $m\rightarrow +\infty$. 
%From the smoothness and non-degeneracy of the geodesic, 
%we conclude that the positive function $\ddbar \log K_t^{(m)} (v,\bar v)$ converges to zero almost everywhere on the support of $\chi$. 
%Without loss of generality, we assume that it converges at the point $p$, i.e. 
\begin{equation}
\label{sc-008}
\ddbar \log K_t^{(m)}(v_0,\bar v_0) \rightarrow 0,
\end{equation}

Now Proposition (\ref{bk-prop-001}) immediately implies the following estimates 
\begin{equation}
\label{sc-009}
|| k^{(m)}_{a,t} ||^{-2}  [i\ddbar\phi_a \wedge T^{(m)}_a]_{D_{a,t}} = o (m^{-1}),
\end{equation}
and 
\begin{equation}
\label{sc-010}
|| k^{(m)}_{a,t} ||^{-2}  || \dbar_X \g^{(m)}_{a} ||^2 \rightarrow 0. 
\end{equation}

Observe that the function $K^{(m)}_{a,t}(\cdot, z)$ is holomorphic for a fixed $z$,
and then we can introduce the following probability measure for each $m$ (we will omit the index $a$)
$$ d\lambda_m (\z):= \frac{|K^{(m)}_t(\z,z)|^2 e^{-m\phi(\z)}}{K^{(m)}_t(z,z)} dV(\z).$$

Put
$$\hg^{(m)}: = \frac{\g^{(m)}}{K^{(m)}_t},$$
Equation (\ref{sc-010}) reduced to 
\begin{equation}
\label{sc-011}
\int_{D_t} |\dbar_X \hg^{(m)}|^2 d\lambda_m \rightarrow 0.
\end{equation}
Moreover, the $L^2$ norm of $\hg^{(m)}$ is uniformly bounded with respect to the measure $d\lambda_m$.
\begin{lemma}
\label{lem-sc-001}
There exists a uniform constant $C$, such that we have 
\begin{equation}
\label{sc-012}
\int_{D_t}| \hg^{(m)}|^2 e^{-m\phi} d\lambda_m \leq C.
\end{equation}
\end{lemma}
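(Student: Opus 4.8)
The plan is to reduce the estimate (\ref{sc-012}) to the weighted $L^{2}$-bound
$$\|\g^{(m)}\|_{m\phi}^{2}\;:=\;\int_{D_t}|\g^{(m)}|^{2}\,e^{-m\phi}\,dV\;\leq\;C\,K^{(m)}_t(z,z),$$
which is exactly what the Lemma asserts once the definitions of $d\la_m$ and of $\hg^{(m)}=\g^{(m)}/K^{(m)}_t$ are unwound. Recall from Section 2 that $\g^{(m)}$ is the Euclidean contraction of the unique $\dbar$-closed $(n,1)$-form $\a^{(m)}$ with $\dbar_X^{*}\a^{(m)}=\d^{\phi_m}_t k^{(m)}_t$, that $|\g^{(m)}|=|\a^{(m)}|$ pointwise, and that after the translation of Lemma \ref{bk-lem-001} all of this may be read on the strictly pseudoconvex fibre $D_t$; the $\dbar$-closed piece $\g^{(m)}_2=a\lrcorner k^{(m)}_t$ split off there contributes only $|a|^{2}K^{(m)}_t(z,z)$ to the $L^{2}$-norm, so it is harmless and I will suppress it. Thus it is enough to bound $\|\a^{(m)}\|_{m\phi}^{2}$.

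First I would use the minimal-solution identity. Being the $\dbar$-closed solution of least norm, $\a^{(m)}$ lies in the image of $\dbar_X$ (closed range on $D_t$), so $\a^{(m)}=\dbar_X w^{(m)}$ with $w^{(m)}$ the minimal potential, and then
$$\|\a^{(m)}\|_{m\phi}^{2}=\langle\dbar_X w^{(m)},\a^{(m)}\rangle_{m\phi}=\langle w^{(m)},\d^{\phi_m}_t k^{(m)}_t\rangle_{m\phi}\leq\|w^{(m)}\|_{m\phi}\,\|\d^{\phi_m}_t k^{(m)}_t\|_{m\phi}.$$
Non-degeneracy of $\cG$ along the fibre gives that $\phi$ is strictly plurisubharmonic on $\ol{D_t}$ with $i\ddbar_X\phi_t\geq\delta>0$ for a $\delta$ uniform in $t$, so the weight $m\phi$ has curvature $\geq m\delta$ and H\"ormander's estimate applied to $\dbar_X w^{(m)}=\a^{(m)}$ yields $\|w^{(m)}\|_{m\phi}^{2}\leq(m\delta)^{-1}\|\a^{(m)}\|_{m\phi}^{2}$. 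Combining these, $\|\a^{(m)}\|_{m\phi}^{2}\leq(m\delta)^{-1}\|\d^{\phi_m}_t k^{(m)}_t\|_{m\phi}^{2}$.

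The core of the argument is then to prove $\|\d^{\phi_m}_t k^{(m)}_t\|_{m\phi}^{2}\leq C\,m\,K^{(m)}_t(z,z)$. Writing $\d^{\phi_m}_t K^{(m)}_t=\d_t K^{(m)}_t-m(\d_t\phi)K^{(m)}_t$ and noting that $\d_t K^{(m)}_t(\cdot,z)$ is holomorphic in the first variable, hence lies in $A_{m,t}$, while $\d^{\phi_m}_t k^{(m)}_t$ is orthogonal to $A_{m,t}\,d\z$ (Section 2), one obtains $\d^{\phi_m}_t k^{(m)}_t=-m\,(\mathrm{Id}-P_{m,t})\big((\d_t\phi)\,k^{(m)}_t\big)$, $P_{m,t}$ being the Bergman projection. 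Testing against the competitor $(\d_t\phi)(z)\,k^{(m)}_t\in A_{m,t}\,d\z$, and using only the Lipschitz bound $|(\d_t\phi)(\z)-(\d_t\phi)(z)|\leq C|\z-z|$ together with the standard off-diagonal (Agmon-type) decay of the weighted Bergman kernel,
$$m^{-2}\|\d^{\phi_m}_t k^{(m)}_t\|_{m\phi}^{2}\leq\|\big((\d_t\phi)-(\d_t\phi)(z)\big)\,k^{(m)}_t\|_{m\phi}^{2}\leq C\!\int_{D_t}\!|\z-z|^{2}\,|K^{(m)}_t(\z,z)|^{2}\,e^{-m\phi}\,dV=C\,K^{(m)}_t(z,z)\!\int_{D_t}\!|\z-z|^{2}\,d\la_m.$$
The near-diagonal asymptotics of $K^{(m)}_t$ — the natural companion of (\ref{sc-003})--(\ref{sc-004}) — show that $d\la_m$ concentrates at $z$ at scale $m^{-1/2}$, hence $\int_{D_t}|\z-z|^{2}\,d\la_m\leq C/m$ and $\|\d^{\phi_m}_t k^{(m)}_t\|_{m\phi}^{2}\leq C\,m\,K^{(m)}_t(z,z)$. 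Chaining the three displays gives $\|\g^{(m)}\|_{m\phi}^{2}=\|\a^{(m)}\|_{m\phi}^{2}\leq(m\delta)^{-1}\cdot C\,m\,K^{(m)}_t(z,z)=C\,K^{(m)}_t(z,z)$, which is (\ref{sc-012}).

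The hard part will be the last estimate, i.e. the second-moment bound $\int_{D_t}|\z-z|^{2}\,d\la_m=O(1/m)$: the two powers of $m$ sitting in $\d^{\phi_m}_t k^{(m)}_t=-m(\mathrm{Id}-P_{m,t})\big((\d_t\phi)k^{(m)}_t\big)$ have to be cancelled \emph{exactly} — one of them by the Bergman projection annihilating the constant term of $\d_t\phi$, which is precisely what supplies the factor $|\z-z|^{2}$ in the integrand, and the other by the $m^{-1/2}$ concentration width of the weighted Bergman kernel — leaving nothing to spare. This is also where the non-degeneracy of $\cG$ is genuinely needed: it provides both the uniform constant in H\"ormander's estimate and, through the uniform lower bound on $i\ddbar_X\phi_t$, the correct $m^{-1/2}$ concentration scale of $d\la_m$.
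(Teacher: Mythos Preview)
Your argument is correct, and the first half (the H\"ormander bound $\|\g^{(m)}\|_{m\phi}^{2}\leq (m\delta)^{-1}\|\d^{\phi_m}_t k^{(m)}_t\|_{m\phi}^{2}$) coincides with the paper's. The divergence is in how you bound $\|\d^{\phi_m}_t k^{(m)}_t\|_{m\phi}^{2}$. The paper observes that $u_m:=\d^{\phi_m}_t K^{(m)}_t$ is orthogonal to $A_{m,t}$, hence is the \emph{minimal} solution of $\dbar_X u=f_m$ with $f_m=-mK^{(m)}_t\,\dbar_X\dot\phi$, and applies H\"ormander a second time:
\[
\|u_m\|_{m\phi}^{2}\;\leq\;\int_{D_t}\langle (m\,i\ddbar_X\phi)^{-1}f_m,f_m\rangle\,e^{-m\phi}
\;=\;m\int_{D_t}|\nabla\dot\phi|^{2}_{g}\,|K^{(m)}_t|^{2}e^{-m\phi}.
\]
Then the geodesic equation $c(\phi)=0$ gives $|\nabla\dot\phi|^{2}_{g}=\ddot\phi\leq C$, so the right side is $\leq Cm\,K^{(m)}_t(z,z)$ directly. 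This is two lines and uses nothing beyond H\"ormander and the $C^{1,1}$ bound on $\ddot\phi$. Your route --- subtracting the constant $(\d_t\phi)(z)$ via the Bergman projection, invoking the Lipschitz bound on $\dot\phi$, and then cashing in the second-moment estimate $\int|\z-z|^{2}\,d\la_m=O(1/m)$ from off-diagonal Bergman asymptotics --- reaches the same conclusion but imports a substantially heavier piece of machinery (the Agmon-type concentration of $d\la_m$), which the paper nowhere needs. What your approach buys is that it never appeals to the geodesic equation, only to $\dot\phi\in C^{0,1}$; but since the paper's use of $c(\phi)=0$ could equally be replaced by the trivial bound $|\nabla\dot\phi|^{2}_{g}\leq C$ on the compact $\ol{D_t}$, this is not a real gain. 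The paper's double-H\"ormander argument is the more economical one here.
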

\begin{proof}
On each slice $D_t$ of the domain $D$, we consider the following function 
$$u_m:= \d^{\phi_m}_t K^{(m)}_t,$$
and let $f_m$ be its derivative in $\dbar$ direction, i.e. 
$$ f_m: = \dbar_X u_m = K^{(m)}_t \dbar_X \dot\phi_m. $$

Observe that $u_m$ is orthogonal to all holomorphic functions in the space $A_m$,
and then H\"ormander's $L^2$ estimate implies the following
\begin{eqnarray}
\label{sc-013}
|| u_m ||^2_{\phi_m} &\leq& m\int_{D_t} \phi^{\bar k j} \d_j\dot{\phi}_t \dbar_k\dot\phi_t |K^{(m)}_t|^2 e^{-\phi_m}
\nonumber\\
&\leq& m\int_{D_t} \ddot{\phi} |K^{(m)}_t|^2 e^{-\phi_m} 
\nonumber\\
&\leq& C m K^{(m)}_t(z,z),
\end{eqnarray}
where we used geodesic equation in the inequality. 
Moreover, recall that $\g^{(m)}$ is the solution of the following dual $\dbar_X$-equation
with the strictly positive weight function $m\phi$ 
$$ \d_X^{\phi_m} \g^{(m)} = \d^{\phi_m} k^{(m)}_t.$$
Therefore, H\"ormander's $L^2$ estimate again implies that we have 
\begin{equation}
\label{sc-014}
\int_{D_t} |\g^{(m)}|^2 e^{-\phi_m}\leq Cm^{-1} \int_{D_t} |\d^{\phi_m}_t K^{(m)}_t|^2 e^{-\phi_m}.
\end{equation}
Combining with equations (\ref{sc-013}) and (\ref{sc-014}), our result follows.
\end{proof}

\subsection{Blowing up analysis}
For each $m$, recall that the $(n-1,0)$ form is defined as the solution of the equation
\begin{equation}
\label{sc-015}
\d^{\phi_m}_X \g^{(m)} = \d^{\phi_m}_t k^{(m)}_t
\end{equation}
Take $\dbar_X$ on each side of the above equation, and we have 
\begin{equation}
\label{sc-016}
\ddbar \phi\wedge \g^{(m)} - \frac{1}{m} \d^{\phi_m}_X \dbar_X \g^{(m)} = \dbar_X\dot\phi_t \wedge k^{(m)}_t.
\end{equation}
Suppose $\g^{(m)}$ is holomorphic, and then the above equation immediately implies that 
$\hg^{(m)}$ is exactly induced by our target vector field $v$! 
This clue leads us to consider the limit of the sequence $\hg^{(m)}$.

Unfortunately, it seems that the probability measure $d\lambda_m$ converges to the Dirac mass $\delta$
at the origin (we take the fixed point $z$ to be the origin from now on). 
Therefore, there is less hope to find the lower bound of this sequence of measures. 
However, look at the density functions of these measures. Put
$$ \Phi_m(\z) := \frac{ |K^{(m)}_t (\z,0)|^2 e^{-m\phi} }{K^{(m)}(0,0)},$$
and at the origin, this density function behaves as 
\begin{equation}
\label{sc-017}
\Phi_m (0)= K^{(m)}_t(0,0) e^{-m\phi} \sim m^n 
\end{equation}

Now write the measure as 
$$ d\lambda_m(\z) =   \Phi_m(\z) d\z^1\wedge d\bar\z^1 \wedge \cdots \wedge d\z^n \wedge d\bar \z^n.  $$
Let us rescale the local coordinates by 
$$ \xi : = \sqrt{m} \cdot \z,$$
and the domain $B_r$ is rescaled to $B_{\sqrt m r}$ for the coordinate $\xi$. 
Then the measure in the new coordinate is 
$$ d\lambda_m (\xi) = \Psi_m (\xi)  d\xi^1\wedge d \bar\xi^1\wedge\cdots\wedge d\xi^n \wedge d\bar\xi^n,$$
where the new density function is 
$$ \Psi_m (\xi ): = \frac{1}{m^n} \Phi_m \left( \frac{\xi}{\sqrt{m}} \right).  $$
This is the correct rescaling, since we can see that the new density function at the origin is 
$$ \Psi_m(0) = \frac{1}{m^n} K^{(m)}_t(0,0) e^{-m\phi} \rightarrow (i\ddbar\phi)^n(0). $$ 

\begin{lemma}
\label{lem-sc-002}
With above notations, there exists a uniform constant $C$, such that we have on $B_R$
$$ | \log\Psi_m |_{C^{1}} \leq C.  $$ 
\end{lemma}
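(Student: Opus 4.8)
The goal is a $C^1$ bound for $\log\Psi_m$ on a fixed ball $B_R$ (in the rescaled coordinate $\xi$), uniform in $m$. The plan is to exploit the off-diagonal Bergman kernel estimates on the Euclidean ball $B_r$ with smooth strictly psh weight $m\phi$, after rescaling $\xi=\sqrt m\,\z$. Recall
$$
\log\Psi_m(\xi)=\log|K^{(m)}_t(\z,0)|^2 - m\phi(\z) - \log K^{(m)}_t(0,0),\qquad \z=\xi/\sqrt m .
$$
First I would write $\log|K^{(m)}_t(\z,0)|^2 = \log K^{(m)}_t(\z,\z) + \log\dfrac{|K^{(m)}_t(\z,0)|^2}{K^{(m)}_t(\z,\z)}$ and treat the two pieces separately. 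For the diagonal piece, the Tian--Yau--Zelditch--type expansion already invoked in equation (\ref{sc-004}) gives $K^{(m)}_t(\z,\z)e^{-m\phi(\z)} = m^n\big(c_n^{-1}(i\ddbar_X\phi_t)^n(\z)/dV + O(1/m)\big)$ smoothly in $\z$ up to the boundary, so after the substitution $\z=\xi/\sqrt m$ the quantity $\log\big(K^{(m)}_t(\z,\z)e^{-m\phi(\z)}\big) - n\log m$ is $C^1$-bounded in $\xi$ on $B_R$ (its $\xi$-gradient carries an extra $m^{-1/2}$ and goes to zero); combined with $\log K^{(m)}_t(0,0) = n\log m + O(1)$ this handles $\log K^{(m)}_t(\z,\z)-\log K^{(m)}_t(0,0)$. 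It remains to control the \emph{normalized off-diagonal ratio}
$$
\rho_m(\xi):=\frac{|K^{(m)}_t(\z,0)|^2 e^{-m\phi(\z)}e^{-m\phi(0)}}{K^{(m)}_t(\z,\z)\,K^{(m)}_t(0,0)} = \frac{|K^{(m)}_t(\z,0)|^2 e^{-m\phi(\z)}}{\Psi_m^{\mathrm{diag}}}\cdot(\cdots),
$$
i.e. the Bergman distance kernel. Here $0\le\rho_m\le 1$ by Cauchy--Schwarz, which already gives the upper bound $\log\Psi_m\le C$; for the $C^1$ bound I would use the standard Agmon/Christ--Delin--Lindholm exponential-decay estimate for weighted Bergman kernels on strictly pseudoconvex (here just Euclidean-ball) domains with smooth weight $m\phi$: there are constants $c,C>0$, uniform in $t$, with $\rho_m(\xi)\le C e^{-c|\xi|}$ and, by Cauchy estimates applied to the holomorphic function $\z\mapsto K^{(m)}_t(\z,0)$ together with the sub-mean-value property, $|\nabla_\xi\log\rho_m|$ is likewise bounded on $B_R$; combined with the diagonal asymptotics, $\log\Psi_m = \log\Psi_m^{\mathrm{diag}} + \log\rho_m + O(1)$ is $C^1$-bounded on $B_R$.

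Concretely the steps are: (1) record the rescaled diagonal expansion from (\ref{sc-004}) and note $\log\Psi_m^{\mathrm{diag}}\to\log(i\ddbar\phi)^n(0)$ in $C^1_{loc}(\xi)$; (2) write $\log\Psi_m = \log\Psi_m^{\mathrm{diag}} + \log\rho_m$ up to a bounded additive error coming from $K^{(m)}_t(0,0)$; (3) for $\log\rho_m$ invoke the near-diagonal exponential off-diagonal estimate for the Bergman kernel of a strictly pseudoconvex domain with smooth strictly psh weight $m\phi$ — this is where one must cite the off-diagonal asymptotics (Berman--Berndtsson--Sj\"ostrand / Delin / Lindholm); (4) upgrade the pointwise estimate on $\rho_m$ to a gradient estimate via Cauchy estimates on the holomorphic functions $\z\mapsto K_t^{(m)}(\z,0)$ and $\z\mapsto K_t^{(m)}(\z,\z)$ (the latter through the extremal characterization $K_t^{(m)}(\z,\z)=\sup\{|h(\z)|^2:\|h\|_{m\phi}=1\}$), noting that one $\z$-derivative costs a factor $\sqrt m$ which is exactly absorbed by the rescaling $\xi=\sqrt m\z$; (5) assemble. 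I would carry the constant's dependence on $R$, on the fixed ball radius $r$, on $\|\phi\|_{C^k(B_r)}$ and on a positive lower bound for $i\ddbar\phi$ explicitly, and remark that all constants are uniform in $t$ because the domain $D_t$ and weight $\phi_t$ vary smoothly in $t$ on a compact parameter set.

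The main obstacle is step (3)--(4): obtaining a \emph{uniform-in-$m$ and uniform-in-$t$} first-order estimate on the off-diagonal kernel $K^{(m)}_t(\z,0)$ on the whole fixed ball $B_R$ in rescaled coordinates, not merely on a neighborhood of the origin. Away from a fixed neighborhood of $0$ one needs the exponential decay $|K_t^{(m)}(\z,0)|^2 e^{-m\phi(\z)-m\phi(0)}\le C m^{2n} e^{-c\sqrt m\,|\z|}$, which on $B_R$ in the $\xi$-variable reads $\le Cm^{2n}e^{-c|\xi|}$; proving this with constants independent of $t$ requires either a ready-made theorem for smoothly varying families (available in the Bergman-kernel literature for strictly pseudoconvex domains) or a direct H\"ormander-type construction: solve $\dbar v = \dbar(\chi K_t^{(m)}(\cdot,0))$ with a weight modified by $\psi(\z)=c\sqrt m\,|\z|$ (whose Levi form is dominated by $m\,i\ddbar\phi$ once $m$ is large, since $i\ddbar\phi>0$ and $i\ddbar(c\sqrt m|\z|) = O(\sqrt m/|\z|)$), and read off the decay from the $L^2$ bound plus interior elliptic (Cauchy) estimates. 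Everything else — the diagonal expansion, the Cauchy--Schwarz upper bound, Fubini in $t$ — is routine. Once the uniform pointwise-plus-gradient off-diagonal estimate is in hand, the stated $C^1$ bound $|\log\Psi_m|_{C^1}\le C$ on $B_R$ follows immediately.
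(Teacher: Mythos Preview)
Your approach can be made to work, but it is far heavier than what the paper actually does, and it imports machinery (near-diagonal and off-diagonal Bergman kernel asymptotics of Delin/Lindholm/Berman--Berndtsson--Sj\"ostrand type, Agmon decay, Cauchy estimates on the extremal characterization) that is nowhere needed.

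The paper's argument rests on a single elementary observation you did not use: for fixed $z=0$ the function $\z\mapsto K_t^{(m)}(\z,0)$ is \emph{holomorphic}, so $\log|K_t^{(m)}(\z,0)|^2$ is pluriharmonic where it is finite. Hence
\[
i\ddbar_\z \log\Phi_m \;=\; i\ddbar_\z\Big(\log|K_t^{(m)}(\z,0)|^2 - m\phi(\z) - \log K_t^{(m)}(0,0)\Big)\;=\; -\,m\,i\ddbar_\z\phi,
\]
which after the rescaling $\xi=\sqrt m\,\z$ gives the two-sided bound $-C\le i\ddbar_\xi\log\Psi_m\le 0$ on $B_R$. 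Thus $u_m:=\log\Psi_m$ has uniformly bounded Laplacian. An $L^p$ bound for $u_m$ is obtained from the fact that $\Psi_m$ is a probability density on $B_{\sqrt m\,r}$ (so $\int_{B_R}e^{u_m}\le 1$), and then the $W^{2,p}$ estimate plus Sobolev embedding yields the $C^1$ bound. No off-diagonal kernel estimate, no diagonal/off-diagonal splitting, and no exponential decay is invoked.

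In short: you decompose $\log\Psi_m$ as diagonal-plus-ratio and estimate each piece with asymptotic Bergman-kernel technology; the paper instead computes $i\ddbar\log\Psi_m$ exactly in one line and then runs standard elliptic regularity. Your route would buy you more precise information (e.g.\ the actual limit of $\Psi_m$ and exponential decay in $|\xi|$), but for the bare $C^1$ bound it is substantial overkill, and your step (4) --- passing from pointwise off-diagonal decay to a uniform gradient bound on $\log\rho_m$ via Cauchy estimates on $K_t^{(m)}(\z,\z)$, which is not holomorphic in $\z$ --- would still require care.
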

\begin{proof}
Notice that we have on $B_r$
\begin{equation}
\label{sc-018}
-mC \leq \ddbar_{\z} \log \Phi_m = -m \ddbar_{\z} \phi \leq 0, 
\end{equation}
and then on $B_{\sqrt m r}$, it implies 
\begin{equation}
\label{sc-019}
-C \leq \ddbar_{\xi}\log\Psi_m \leq 0.
\end{equation}

Put $u_m: = \log \Psi_m$, and then this sequence of function $u_m$ has uniformly bounded Laplacian on any 
Euclidean ball $B_R $, where the radius $R$ is less than $\sqrt m r$ for any $m$ large enough.
Moreover, we have its $L^p$ norm is controlled by 
\begin{eqnarray}
\label{sc-020}
\int_{B_R} |u_m|^p dV(\xi) &\leq & p! \int_{B_R} e^{u_m} dV(\xi) 
\nonumber\\
&\leq& C_p \int_{B_{\sqrt m r}} \Psi_m (\xi) dV(\xi)
\nonumber\\
&\leq& C_p.
\end{eqnarray}

Hence the $W^{2,p}$ norm of $u_m$ is uniformly controlled on the ball $B_R$.
Then by the Sobolev embedding theorem, the $C^1$ norm $|u_m|_{C^1(B_R)}$ is also uniformly bounded, 
by possibly passing to a subsequence. 
\end{proof}

Now we can assume the density function $\Psi_m$ is uniformly bounded away from zero on $B_R$.
Moreover, in the $\xi$-coordinate, we can further compute 
\begin{eqnarray}
\label{sc-021}
\d_{\xi} \log \Psi_m &=& \frac{ \ol{ K}^{(m)}\d (e^{-\phi_m } K^{(m)})    }{\Psi_m}
\nonumber\\
&=& \frac{\d_{\xi} K^{(m)}}{K^{(m)}} - \d\phi_m
\end{eqnarray}

Take the Taylor expansion of $\phi$ in a normal coordinate around the origin ($\phi\in C^3$ is enough)
$$ \phi(\z) = c_{j\bar k} \z^j\bar\z^k +O(|\z|^3).  $$
Then we denote $\psi_m$ by $m\phi$ in $\xi$-coordinate as 
$$\psi_m(\xi): = m\phi(m^{-1/2} \xi ) = c_{j\bar k}\xi^j\bar\xi^k +  O(m^{-1/2} |\xi|^3 ).$$
Therefore, we have easily 
\begin{equation}
\label{sc-022}
\d \psi_m (\xi) = O(|\xi |),
\end{equation}
and 
\begin{equation}
\label{sc-023}
\d_j \dbar_k \psi_m (\xi) =  c_{j\bar k} + O( m^{-1/2} |\xi |).
\end{equation}
Combining with equation (\ref{sc-021}) and (\ref{sc-022}), we have seen that there exists a uniform constant $C$ satisfying 
\begin{equation}
\label{sc-024}
\left| \frac{\d_{\xi} K^{(m)} }{K^{(m)}}\right|  \leq C,
\end{equation}
on $B_R$. 
Moreover, equation (\ref{sc-023}) implies that $\ddbar\psi_m$ converges to the $(1,1)$ form $\ddbar\phi(0)$ with constant coefficients on $B_R$, 
in $C^1$ norm provided $\phi\in C^3$.

Rescale the vector field $v$ by thinking of it as an $(n-1,0)$ form in the following way.
Writing $v|_{D_t} = \sum_{i} v^i d\hat\z^i$ on each fiber,
rescale gives for each $m$ 
$$ v_m (\xi): = m^{\frac{1}{2}}v^i (m^{-1/2}\xi ) d\hat\xi^i,$$
such that $v_m = m^{\frac{n}{2}} v$. Therefore, we have on the $\xi$-coordinate 
$$ \dbar_{X} v_m =  \sum_{i,k\geq 1}\d_{\bar k} v^i d\bar\xi^k\wedge d\hat\xi^i. $$

In the same way, we put $\tau_m: = m^{\frac{n}{2}} \hg^{(m)}$ on the $\xi$-coordinate.
Observe that $\dbar_X\tau_m$ is also primitive with respect to the Euclidean metric. 
Then its $L^2$ norm $|| \dbar_X \tau_m ||_{L^2(B_R)}$ converges to zero by equation (\ref{sc-011}) and Lemma (\ref{lem-sc-002}), 
since we have 
\begin{eqnarray}
\label{sc-025}
\int_{B_R} | \dbar_X \tau_m |^2  dV(\xi) &\leq& C \int_{B_R} |\dbar_X \tau_m|^2 \Psi_m dV(\xi)
\nonumber\\
&=& C \int_{B_R}  \left( \sum_{i,k} |\d_{\bar k} \tilde{a}^i (m^{-1/2}\xi)|^2 \right) d\lambda_m(\xi)
\nonumber\\
&\leq& C \int_{B_r} \left( \sum_{i,k} |\d_{\bar k} \tilde{a}^i (\z) |^2 \right) d\lambda_m(\z).
\end{eqnarray}

Moreover, equation (\ref{sc-016}) is rescaled into the following form
\begin{equation}
\label{sc-026}
\ddbar_X \psi_m\wedge \tau_m - \frac{1}{K^{(m)}}\d^{\psi_m}\dbar_X ( K^{(m)}\tau_m ) = \dbar_X \dot \psi_m \wedge d\xi.
\end{equation}
Since $\tau_m$ and $v_m$ are both $(n-1,0)$ forms, 
we can apply the contraction operator $i\ddbar_X \psi_m\lrcorner$ on both sides to get
\begin{equation}
\label{sc-add-0265}
\tau_m  - i\ddbar_X \psi_m\lrcorner   \left\{ \frac{1}{K^{(m)}}\d^{\psi_m}\dbar_X ( K^{(m)}\tau_m ) \right\} = v_m.
\end{equation} 
Hence it leads us to compare the current 
$$T^{(m)}_1: =  \dbar_X \tau_m, $$
with our target 
$$ T^{(m)}_2: =  \dbar_X v_m. $$

Observe that the current $T_1$ converges to the zero current in the $L^2$ sense on the ball $B_R$,
from equation (\ref{sc-023}) and estimate (\ref{sc-025}).
Moreover, the current $T_2$ converges weakly to the current $\dbar_X v(0)$ with constant coefficients. 
Then we close our proof from the following 
\begin{theorem}
\label{thm-sc-001}
Suppose $\cG$ be a non-degenerate smooth geodesic.
Assume that the Mabuchi functional is linear along it. 
Then the vector field $v$ is holomorphic on the geodesic. 
\end{theorem}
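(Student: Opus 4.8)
The plan is to fix an arbitrary point $p\in X_t$ and prove $\dbar_X v=0$ there; since $\cG$ is smooth this shows $v$ is holomorphic along every fibre, and together with $\cG^{n+1}=0$ this is exactly the statement that $\cG$ is generated by a holomorphic vector field. First I would record the consequence of linearity. By the discussion preceding the theorem, linearity of $\cM$ makes the push-forward of $T=i\ddbar\log\omega_{\varphi_t}^n\wedge(i\ddbar\phi)^n$ to $\Sigma$ vanish, so $\ddbar\log\omega_{\varphi_t}^n(v_0,\bar v_0)=0$ at $p$, hence $(\ref{sc-008})$; feeding this into Proposition \ref{bk-prop-001} for the $m$-Bergman kernels $K^{(m)}_t$ (both terms on its right-hand side are nonnegative) forces $(\ref{sc-011})$, i.e. $\int_{D_t}|\dbar_X\hg^{(m)}|^2\,d\lambda_m\to 0$.

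Next I would carry out the blow-up at $p$ exactly as in the preceding subsections: rescale $\xi=\sqrt m\,\z$, set $\tau_m=m^{n/2}\hg^{(m)}$ and $v_m=m^{n/2}v$. By Lemma \ref{lem-sc-002}, after passing to a subsequence $|\log\Psi_m|_{C^1(B_R)}\le C$ and $\Psi_m\ge c>0$; this upgrades the previous display to $\|\dbar_X\tau_m\|_{L^2(B_R)}\to 0$ as in $(\ref{sc-025})$, and $(\ref{sc-023})$ gives $\ddbar_X\psi_m\to\ddbar\phi(0)$ in $C^1(B_R)$. A direct Taylor expansion of $v$ at $p$ shows $\dbar_X v_m\to\dbar_X v(0)$ locally uniformly, a form with constant coefficients — the target current $T_2^{(m)}$. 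Since $v_m$ itself need not converge (its leading term is of size $m^{1/2}v(0)$), the whole argument is conducted through $\dbar_X v_m$.

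The heart of the argument is the rescaled identity $(\ref{sc-add-0265})$, which I rewrite as $v_m=\tau_m-E_m$ with $E_m:=i\ddbar_X\psi_m\lrcorner\{(K^{(m)})^{-1}\d^{\psi_m}\dbar_X(K^{(m)}\tau_m)\}$. Because $K^{(m)}_t(\cdot,0)$ is holomorphic, $\dbar_X(K^{(m)}\tau_m)=K^{(m)}\dbar_X\tau_m$, and because $\d\log K^{(m)}-\d\psi_m=\d\log\Psi_m$ (the conjugate-holomorphic factor of $|K^{(m)}|^2$ is annihilated by $\d$), the Bergman weights cancel and
\[ E_m=i\ddbar_X\psi_m\lrcorner\big(\d\log\Psi_m\wedge\dbar_X\tau_m+\d\dbar_X\tau_m\big). \]
Applying $\dbar_X$ to $v_m=\tau_m-E_m$ gives $\dbar_X v_m=\dbar_X\tau_m-\dbar_X E_m$; the first term $\to 0$ in $L^2(B_R)$, and I claim $\dbar_X E_m\to 0$ as a current. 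For the piece $\d\log\Psi_m\wedge\dbar_X\tau_m$ this is immediate, since its coefficients ($\ddbar_X\psi_m$ and $\d\log\Psi_m$) are uniformly bounded by Lemma \ref{lem-sc-002} while $\dbar_X\tau_m\to 0$ in $L^2$. For the piece $\d\dbar_X\tau_m$, pairing $\dbar_X\big(i\ddbar_X\psi_m\lrcorner\d\dbar_X\tau_m\big)$ with $\chi\in C_c^\infty(B_R)$ and integrating by parts twice transfers both derivatives onto $\chi$ and onto the $C^1$-bounded coefficients of $\ddbar_X\psi_m$, leaving $\pm\int_{B_R}\dbar_X\tau_m\wedge(\text{bounded, compactly supported})$, which is $O(\|\dbar_X\tau_m\|_{L^1(B_R)})=o(1)$. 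Hence $\dbar_X v(0)=\lim_m\dbar_X v_m=0$; as $p$ was arbitrary, $v$ is holomorphic on each fibre, and this is equivalent to $\cG$ being generated by a holomorphic vector field.

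I expect the only delicate point to be that last estimate: $E_m$ carries two more derivatives than the quantity $\dbar_X\tau_m$ that we actually control, and only in $L^2$. The integration by parts closes precisely because Lemma \ref{lem-sc-002} furnishes a genuine $C^1$ (not merely $C^0$) bound on $\log\Psi_m$, so that $\d\log\Psi_m$ and the differentiated coefficients of $\ddbar_X\psi_m$ stay bounded, and because the Bergman weights $e^{\mp\psi_m}(K^{(m)})^{\pm 1}$ cancel after using holomorphicity of $K^{(m)}$; tracking those cancellations, and the algebraic but $\ddbar_X\psi_m$-dependent contraction operator, through the integration by parts is the bookkeeping that must be done with care.
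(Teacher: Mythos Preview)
Your approach is essentially the same as the paper's: both arguments perform the blow-up, invoke $(\ref{sc-add-0265})$, and kill the error term $E_m=\tau_m-v_m$ weakly by two integrations by parts against a compactly supported test form, using $(\ref{sc-025})$ for $\dbar_X\tau_m\to0$ in $L^2$, the $C^1$ bound of Lemma~\ref{lem-sc-002}, and $(\ref{sc-024})$ for $\d K^{(m)}/K^{(m)}$. Your packaging is in fact a little cleaner than the paper's: you observe the identity $\d\log K^{(m)}-\d\psi_m=\d\log\Psi_m$, which collapses the paper's three-term decomposition in $(\ref{sc-028})$ (one term from $\d K^{(m)}/K^{(m)}$, one from $-\d\psi_m$ inside $\d^{\psi_m}$, and one involving $\vartheta_m$) into just two terms with manifestly $C^1$-bounded coefficients; the paper instead tests $\tau_m-v_m$ against $\ol{\d^{\psi_m}_X W}e^{-\psi_m}$ and handles the $\vartheta_m$-term via the Hodge star $*_m$. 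The only step you leave implicit is the holomorphicity of $V_t$ in the $\bar t$-direction: the paper checks this in one line by observing that once $V_t$ is fibrewise holomorphic, $\d V_t/\d\bar t=\ \uparrow^{\omega_\phi}\!\dbar_X c(\phi)$, which vanishes by the geodesic equation $c(\phi)=0$; your appeal to ``together with $\cG^{n+1}=0$'' is exactly this, but it is worth writing out.
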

\begin{proof}
First we want to prove the vector is holomorphic along each fiber $X_t$.
Assume $p$ is the origin in the $\xi$-coordinate.  
It is enough to prove that for any smooth $(n-2,0)$ form $W$ compactly supported in $B_R$, we have as $m \rightarrow +\infty$
\begin{equation}
\label{sc-027}
\int_{B_R}  (\tau_m - v_m) \wedge \ol{\d^{\psi_m}_X W}e^{-\psi_m}  \rightarrow 0.
\end{equation}
But this is equivalent to prove the following converges to zero by equation (\ref{sc-026})
\begin{eqnarray}
\label{sc-028}
&&\int_{B_R}  \left\{  i\ddbar_X \psi_m \lrcorner \frac{1}{K^{(m)}} \d_X ( e^{-\psi_m} K^{(m)} \dbar_X \tau_m ) \right\} \wedge \ol{\d^{\psi_m}_X W}
\nonumber\\
&=& \int_{B_R} \{ i\ddbar\psi_m\lrcorner \d_X (e^{-\psi_m}\dbar_X\tau_m)  \}\wedge\ol{\d^{\psi_m}_X W} 
\nonumber\\
&+& \int_{B_R}  \left\{ i\ddbar\psi_m\lrcorner \left( \frac{\d K^{(m)}}{K^{(m)}} \wedge \dbar_X\tau_m\right) \right\} \wedge \ol{\d^{\psi_m}_X W} e^{-\psi_m}
\nonumber\\
&=& - \int_{B_R} \{ i\ddbar_X\psi_m\lrcorner  \dbar_X \tau_m\}  \wedge \ol{\dbar \d^{\psi_m}_X W} e^{-\psi_m} 
- \int_{B_R} \vartheta_m (e^{-\psi_m} \dbar_X\tau_m) \wedge  \ol{\d^{\psi_m}_X W} 
\nonumber\\
&+&\int_{B_R} \left\{ i\ddbar\psi_m\lrcorner \left( \frac{\d K^{(m)}}{K^{(m)}} \wedge \dbar_X\tau_m\right) \right\} \wedge \ol{\d^{\psi_m}_X W} e^{-\psi_m}.
\nonumber\\
\end{eqnarray}

Since we assumed the metric $\phi\in C^3$ is strictly plurisubharmonic, 
the K\"ahler metric $\omega_m: = i\ddbar\psi_m$ converges to the Hermitian form $c_{j\bar k} d\xi^{j}\wedge d\bar\xi^k$ in $C^1$-norm on $B_R$.
Therefore, the operator $i\ddbar\psi_m \lrcorner$ converges to the constant matrix $(c_{j\bar k})^{-1}$ multiplication,
and then the first and third term on the RHS of equation (\ref{sc-028}) converge to zero by the estimate (\ref{sc-025}) and (\ref{sc-024}) .

The second term is a bit of tricky. In order to deal with the operator $\vartheta_m$, 
we first write $*_m \tilde W: = \d^{\psi_m}_X W$, where $*_m$ is the Hodge star operator associated with the metric $\omega_m$.
Notice that the coefficients of the new form $\tilde W$ is just an linear combination of the coefficients of $\d^{\psi_m}W$ 
multiplying with the coefficients of $\omega_m$. 

Then an integration by parts says the following 
\begin{equation}
\label{sc-0285}
\langle \vartheta (e^{-\psi_m} \dbar_X\tau_m), \tilde W \rangle_m = \langle e^{-\psi_m} \dbar_X\tau_m, \dbar \tilde W \rangle_m,
\end{equation}
but the coefficients of the term $\dbar_X \tilde W$ involves at most one derivatives of the metric.
Therefore, all coefficients actually converges at least in $C^0$ to the limit,
and then it is easy to see equation $(\ref{sc-0285})$ also converges to zero.

Finally, we want to argue that the vector field $V_t$ is also holomorphic along the time direction, 
but this is straightforward for non-degenerate $\omega_{\phi}$, since we can view the derivative $\d V_t / \d \bar t$ 
as the complex gradient of $c(\phi)$ when $V_t$ is fiberwise holomorphic, i.e.
$$ \frac{\d V_t}{ \d \bar t} = \  \uparrow^{\omega_{\phi}} \dbar_X c(\phi). $$
Therefore, it vanishes when the function $c(\phi)$ is identically zero. 
 
\end{proof}

\begin{rem}
\label{sc-rem-002}
In fact, we can prove that the vector field is fiberwisely holomorphic 
with a weaker regularity assumption.
In fact, it is enough to assume the following: 
$\phi\in C^{1,1}$, $\phi \in C^{3}(X_t)$  and $\dot\phi \in W^{2,1}(X_t)$.
 
The regularity condition on $\dot\phi$ enable us to utilise the Lebegue Differentiation Theorem. 
Therefore, it is enough to prove $\dbar v =0$ in the weak sense as we did in equation $(\ref{sc-027})$.
Now the $\dbar$-operator still has the closed range locally along each fiber, 
but the weighted Bergman kernel $K_t(\z,z)$ is merely Lipschitz continuous in $t$-direction. 
Fortunately, the equation $\d^{\phi}_X\g = \d^{\phi}_t k_t$ only involves the first derivative of the weighted Bergman kernel,
and the solution $\g$ is in fact an $L^{\infty}$ function in $t$-direction. 
But this is already enough to establish equation $(\ref{sc-016})$, and everything follows similarly. 
 
\end{rem}

%\begin{lemma}
%\label{lem-sc-add-001}
%Let $D$ be a product or parallelogram domain in $\bC^{n}\times \bC$,
%and $f_m, f$ be positive $L^{\infty}$ functions on $D$.
%Assume $f_{m,t}$ converges to $f_t$ in $L^1(D_t)$ for each $t\in U$,
%and there 

\section{Non-degeneracy of the $C^{1,1}$-geodesics}
The Mabuchi functional can be defined on the space of all $C^{1,1}$ K\"ahler potentials as 
$$ \cM: = E + H,$$
where the energy part is 
$$ E(\varphi): = \frac{R}{n+1}\cE(\varphi) - \cE^{Ric \omega} (\varphi),$$
and the entropy part is 
$$ H(\varphi): = \int_X \log \omega_{\varphi}^n \cdot \omega_{\varphi}^n - \int_X \log \omega^n \cdot \omega_{\varphi}^n. $$
They are both invariant under the normalization of the potential. 
Therefore, the Mabuchi functional is also normalization invariant. 

Let $\cG$ be a $C^{1,1}$ geodesic segment defined on the manifold $M: = X\times \Sigma$.
Our goal is to prove the following statement 
\begin{theorem}
\label{thm-wg-000}
Suppose $\cG$ is a $C^{1,1}$ geodesic connecting two non-degenerate energy minimizers of the Mabuchi functional.
Then $\cG$ is generated by a holomorphic vector filed.
\end{theorem}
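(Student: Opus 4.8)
The plan is to reduce Theorem \ref{thm-wg-000} to Theorem \ref{thm-sc-001} by establishing that a $C^{1,1}$ geodesic joining two non-degenerate energy minimizers is automatically non-degenerate and smooth enough (at least $C^4$, or the weaker regularity of Remark \ref{sc-rem-002}) on all of $M = X\times\Sigma$. First I would recall that convexity and continuity of $\cM$ along $C^{1,1}$ geodesics (from \cite{BB}, \cite{CLP}) force $\cM$ to be \emph{linear} along $\cG$: since the endpoints are energy minimizers, $\cM(\varphi_0)=\cM(\varphi_1)=\min\cM$, and the convex function $t\mapsto\cM(\varphi_t)$ with equal minimal endpoint values must be constant. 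This verifies condition (1) of the key observation. Condition (2) is the hypothesis, and condition (3) follows since each non-degenerate energy minimizer is a smooth cscK metric on $X_t$ by the regularity theorems (\cite{CT}, \cite{HZ}); one must argue this fiberwise smoothness propagates, or at least that $f_{\varphi_t}\in W^{1,2}(X_t)$ for all $t$, which is where Proposition \ref{prop-wg-002} enters.

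The heart of the matter is the \textbf{non-degeneracy}. The strategy is to run the formula of Theorem \ref{thm-intr-001}/Proposition \ref{bk-prop-001} with $\cM$ linear, as in Section 3, but now without assuming non-degeneracy. Linearity of $\cM$ forces the push-forward of $T = i\ddbar\log\omega_{\varphi_t}^n\wedge(i\ddbar\phi)^n$ to $\Sigma$ to vanish, hence $\ddbar\log\omega_{\varphi_t}^n(v,\bar v)=0$ wherever the geodesic is non-degenerate. On the (relatively open) non-degenerate locus $U\subset M$ the blowing-up analysis of Section 3 still produces the fiberwise holomorphic vector field $V_t$ with $c(\phi)=\ddot\phi-|\nabla\dot\phi|^2=0$. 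The point is then to show the degenerate locus is empty: if $\omega_{\varphi_t}$ degenerates on a positive-measure (in fact open) set of some fibre, then $f_{\varphi_t}=\omega_{\varphi_t}^n/\omega^n$ vanishes there, and $\log f_{\varphi_t}\to-\infty$; using the $W^{1,2}$ control from condition (3) together with the entropy and the cscK/Monge–Ampère structure near the boundary (where the metric is genuinely non-degenerate, smooth cscK), one derives a contradiction. Concretely I would propagate non-degeneracy from the boundary fibres inward: the set of $t$ for which $\omega_{\varphi_t}$ is non-degenerate everywhere on $X_t$ is nonempty (contains a neighborhood of $\partial\Sigma$ by continuity), and one shows it is both open and closed in $\Sigma$, using that on the non-degenerate part the potential satisfies the smooth HCMA equation and a unique-continuation / maximum-principle argument for $\log f_{\varphi_t}$ forbids the volume ratio from touching zero.

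Once non-degeneracy of $\cG$ on all of $M$ is in hand, I would bootstrap regularity: fiberwise the metric is smooth cscK (\cite{CT}, \cite{HZ}), and the cscK equation combined with the HCMA equation in the $(z,t)$ variables, together with the perturbation result of Chen–Feldman–Hu \cite{CFH} for $C^{5-\a}$ geodesics, upgrades $\cG$ to a smooth (or at least $C^4$, or Remark \ref{sc-rem-002}-regular) geodesic. At that point Theorem \ref{thm-sc-001} applies verbatim: the vector field $V_t = \d/\d t - \phi^{\bar\beta\alpha}\dot\phi_{\bar\beta}\,\d/\d z^\alpha$ is fiberwise holomorphic, and since $c(\phi)\equiv 0$ we get $\d V_t/\d\bar t = \ \uparrow^{\omega_\phi}\!\dbar_X c(\phi) = 0$, so $V_t$ is holomorphic on the whole geodesic, i.e. $\cG$ is generated by a holomorphic vector field.

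I expect the \textbf{main obstacle} to be the non-degeneracy step — ruling out that $\omega_{\varphi_t}$ collapses on part of an interior fibre. Berman's counterexample \cite{Ber} shows this genuinely fails for geodesic \emph{rays}, so any argument must use the two-sided boundary in an essential way (the segment structure), and must exploit that the endpoints are non-degenerate \emph{energy minimizers} rather than arbitrary $C^{1,1}$ potentials — presumably through the entropy term $H$ and the $W^{1,2}$ regularity of the volume ratio, which is exactly the content flagged in condition (3) and in Proposition \ref{prop-wg-002}. Making the open-closed / unique-continuation argument rigorous across the degenerate locus, where the linearized operator loses ellipticity, is the delicate point.
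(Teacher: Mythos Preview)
Your overall architecture---reduce to the smooth case by first proving non-degeneracy, then bootstrap regularity via the fiberwise cscK equation and Chen--Feldman--Hu, then invoke Theorem~\ref{thm-sc-001}---matches the paper. The regularity bootstrap in time (your last two paragraphs) is essentially the content of Proposition~\ref{prop-wg-005}. But the non-degeneracy step, which you yourself flag as the main obstacle, has a genuine gap, and the paper's route is quite different from the open--closed/unique-continuation argument you sketch.

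First, your verification of condition~(3) is circular. You write that $f_{\varphi_t}\in W^{1,2}(X_t)$ ``follows since each non-degenerate energy minimizer is a smooth cscK metric on $X_t$''---but non-degeneracy of the interior fibres is exactly what you are trying to prove, and Proposition~\ref{prop-wg-002} \emph{assumes} the $W^{1,2}$ bound as an input, it does not supply it. The missing observation is that linearity of $\cM$ together with $\cM(\varphi_0)=\cM(\varphi_1)=\min\cM$ forces $\cM(\varphi_t)=\min\cM$ for \emph{every} $t$, so every $\varphi_t$ is itself a $C^{1,1}$ energy minimizer. Then the Chen--Tian estimate (Proposition~\ref{thm-wg-001}) gives $f_{\varphi_t}^{1/2}\in W^{1,2}(X_t)$ directly, \emph{without} any non-degeneracy hypothesis; that is the whole point of invoking the weak K\"ahler--Ricci flow argument.

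Second, the paper does not run an open--closed or unique-continuation argument for non-degeneracy at all. Instead it proves a \emph{gap phenomenon} (Proposition~\ref{prop-wg-001}): comparing $\cM$ with the truncated functional $\cM_A$ (which uses $\max\{\log\omega_{\varphi}^n,\chi-A\}$ in place of $\log\omega_{\varphi}^n$), linearity forces $\cM_A\equiv\cM$ along $\cG$ for all $A\ge A_0$, and unwinding this shows that on each fibre the volume ratio $f_{\varphi_t}$ is either $\ge\varepsilon_0$ or $=0$ almost everywhere---no intermediate values. Non-degeneracy then follows from an elementary Sobolev-type lemma (Lemma~\ref{lem-app2-001}): a $W^{1,2}$ function on a cube taking only the values $0$ and $\ge 1$ cannot be nontrivially two-valued, because its restriction to almost every line is continuous. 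Your maximum-principle sketch for $\log f_{\varphi_t}$ is problematic precisely where $f_{\varphi_t}$ may vanish (the equation degenerates), whereas the gap~$+~W^{1,2}$ argument sidesteps ellipticity entirely.
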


This statement is not difficult to prove with enough regularities of the geodesic,
but the regularities of the geodesic can not be raised over $C^2$ in general.
In order to overcome this difficulty, we begin to investigate the truncated Mabuchi functional instead. 

If we restrict the Mabuchi functional to this geodesic,
then the principle contribution made to the 
complex Hessian of the Mabuchi functional is from the first part of the entropy
$$\cM(\varphi_t) \sim \int_X \log \omega_{\varphi_t}^n \cdot \omega_{\varphi_t}^n, $$
and we use $\Psi(\varphi): = \log \omega_{\varphi}^n$ to denote the metric on $K_{M/\Sigma}$. 
According to Berman-Berndtsson \cite{BB}, there also exists a fixed continuous metric $\chi$ on $K_{M/ \Sigma}$ satisfying 
\begin{equation}
\label{wg-001}
i\ddbar\chi \geq - m_0 (\pi^*\omega + i\ddbar \Phi),
\end{equation}
for some fixed integer $m_0$. 
Therefore, we can introduce the following truncated Mabuchi functional 
$$  \cM_A (\varphi_t) : = E (\varphi_t) + H_A(\varphi_t),$$
where 
$$H_A(\varphi): = \int_X \max \{ \log \omega_{\varphi}^n, \chi - A \}\omega_{\varphi}^n - \int_X \log \omega^n \cdot \omega_{\varphi}^n. $$

Put $\Psi_A: = \max \{ \log \omega_{\varphi}^n, \chi - A \}$, 
we observe that the complex Hessian of the truncated Mabuchi functional on the geodesic $\cG$ is the push forward 
$$  i\ddbar_t \cM_A : = [i\ddbar\Psi_A \wedge \rho^n]_{X_t}, $$
and Berman-Berndtsson \cite{BB} proved that the above $(n+1, n+1)$ current is positive. 
Hence the truncated Mabuchi functional is convex along the geodesic in distributional sense. 
Moreover, Theorem 3.4 in \cite{BB} also proved that this functional $\cM_A$ is continuous up to the boundary.
Therefore, the functional $\cM_A$ is indeed convex along the geodesic $\cG$.

\subsection{Gap phenomenon}
Put a function 
$f_{\varphi} : =  \omega_{\varphi}^n / \omega^n $, and the entropy can be re-written as 
$$ H(\varphi) = \int_X f_{\varphi} \log f_{\varphi} \cdot \omega^n. $$
Moreover, put 
$$f_A : = \max \left\{  \frac{\omega_{\varphi}^n}{\omega^n}, \frac{e^{\chi -A}}{\omega^n}  \right\}, $$
and then the truncated entropy is 
$$ H_{A}(\varphi) = \int_X f_{\varphi}\log f_A\cdot \omega^n. $$
Therefore, by dominant convergence theorem, it is easy to see that the truncated Mabuchi functional 
is decreasing to the Mabuchi functional on a geodesic $\cG$, 
i.e. $\cM_A(\varphi_t) \searrow \cM(\varphi_t)$ for each $t\in[0,1]$ as $A\rightarrow +\infty$.

\begin{lemma}
\label{lem-wg-001}
Suppose the Mabuchi functional $\cM$ is linear along a geodesic segment $\cG$. 
Then there exists a number $A_0$, such that for each $A \geq A_0$, 
the truncated Mabuchi functional $\cM_A$ coincides with $\cM$ along the geodesic $\cG$.   
\end{lemma}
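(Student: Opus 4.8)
The plan is to combine convexity of $\cM_A$ with the fact that $\cM_A \searrow \cM$ pointwise to force equality, and then translate equality of the two functionals into equality of the integrands, which pins down the constant $A_0$. First I would record the hypothesis: $\cM$ is linear along $\cG$, so $\cM(\varphi_t) = (1-t)\cM(\varphi_0) + t\,\cM(\varphi_1)$ on $[0,1]$. On the other hand, by the discussion preceding the lemma each $\cM_A$ is convex and continuous up to the boundary along $\cG$, and $\cM_A(\varphi_t) \searrow \cM(\varphi_t)$ for every fixed $t$ as $A \to +\infty$. At the endpoints $t = 0,1$ the volume-form ratio $f_{\varphi_t}$ is bounded (the boundaries are non-degenerate, so $\omega_{\varphi_t}^n/\omega^n$ is a bounded positive function there), hence for $A$ larger than some $A_1$ depending only on $\sup_{X}\bigl(\log f_{\varphi_0}\bigr)^-, \sup_X\bigl(\log f_{\varphi_1}\bigr)^-$ and $\sup_X(\chi - \log\omega^n)$ we already have $f_A = f_{\varphi_t}$ at $t = 0,1$, i.e. $\cM_A(\varphi_0) = \cM(\varphi_0)$ and $\cM_A(\varphi_1) = \cM(\varphi_1)$.

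Next I would run the convexity squeeze. Fix $A \geq A_1$. Convexity of $\cM_A$ along $\cG$ together with the endpoint equalities gives, for all $t \in [0,1]$,
\[
\cM_A(\varphi_t) \leq (1-t)\cM_A(\varphi_0) + t\,\cM_A(\varphi_1) = (1-t)\cM(\varphi_0) + t\,\cM(\varphi_1) = \cM(\varphi_t),
\]
using linearity of $\cM$ in the last step. But $\cM_A \geq \cM$ always, since $f_A \geq f_{\varphi}$ and $t\mapsto t\log t$ is increasing on the relevant range (more precisely $\int_X f_\varphi \log f_A \,\omega^n \geq \int_X f_\varphi \log f_\varphi\,\omega^n$ because $f_A \geq f_\varphi \geq 0$ and $\log$ is increasing). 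Hence $\cM_A(\varphi_t) = \cM(\varphi_t)$ for every $t \in [0,1]$ and every $A \geq A_1$; this already proves the lemma with $A_0 := A_1$. (If one instead wants the stronger conclusion that the \emph{integrands} agree, i.e. $f_A = f_{\varphi_t}$ a.e.\ on each $X_t$, one argues that $\int_X f_{\varphi_t}\,(\log f_A - \log f_{\varphi_t})\,\omega^n = 0$ with a non-negative integrand forces $\log f_A = \log f_{\varphi_t}$ wherever $f_{\varphi_t} > 0$; and on $\{f_{\varphi_t} = 0\}$ the two functionals agree trivially. This in turn says $\log\omega_{\varphi_t}^n \geq \chi - A$ wherever the density is positive.)

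The one genuinely delicate point — and the step I expect to be the main obstacle — is justifying the convexity inequality for $\cM_A$ \emph{as an honest inequality between values}, not merely in the distributional sense. The excerpt asserts (citing Theorem 3.4 of \cite{BB}) that $i\ddbar_t\cM_A \geq 0$ as a current and that $\cM_A$ is continuous up to the boundary of $\Sigma$; a subharmonic, boundary-continuous function on the strip (or annulus) is genuinely convex in $t$ after passing to the $S^1$-invariant, hence $t$-only, representative, so the pointwise inequality $\cM_A(\varphi_t)\leq (1-t)\cM_A(\varphi_0)+t\,\cM_A(\varphi_1)$ does follow — but this uses the boundary continuity in an essential way, and one should be careful that the truncation does not destroy it (it does not, by the cited result). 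A secondary, more bookkeeping-level, obstacle is making the constant $A_1$ explicit and uniform in $t$: this needs the non-degeneracy of the two boundary potentials to give a genuine $L^\infty$ bound on $\log f_{\varphi_0}$ and $\log f_{\varphi_1}$, together with the uniform lower bound \eqref{wg-001} on $i\ddbar\chi$ to control $\chi$ from below along the geodesic. Once those bounds are in hand the choice $A_0 := \max\{m_0,\ \sup_X(\chi-\log\omega^n),\ -\inf_X\log f_{\varphi_0},\ -\inf_X\log f_{\varphi_1}\}$ (or any larger number) works.
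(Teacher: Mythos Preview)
Your proposal is correct and follows essentially the same approach as the paper: match $\cM_A$ with $\cM$ at the two endpoints by choosing $A$ large (using non-degeneracy of the boundary potentials), then squeeze using convexity of $\cM_A$ and the inequality $\cM_A\geq\cM$. The paper simply normalizes so that $\cM\equiv 0$ along $\cG$ and takes $A_0:=\sup_X(\chi-\log f_{\varphi_i})$, but otherwise the argument is identical; your explicit candidate for $A_0$ at the end is not quite right as written (one needs $A\geq \sup_X(\chi-\log\omega^n-\log f_{\varphi_i})$, and $\sup(a+b)$ is not bounded by $\max(\sup a,\sup b)$), but as you note any sufficiently large constant works, so this does not affect the proof.
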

\begin{proof}
Without loss of generality, we can assume that the Mabuchi functional is identically zero along the geodesic, i.e. $\cM(\varphi_t) \equiv 0$ for each $t\in [0,1]$. 

Let $\varphi_0, \varphi_1$ be the boundary value of the potentials of the geodesic segment $\cG$,
and then we can pick up a constant as 
$$A_0 :=  \sup_{X} (\chi - \log f_{\varphi_i} ), $$
for $i = 0,1$.
Observe that on the boundary, we have 
\begin{equation}
\label{wg-002}
\cM_A (0) = \cM (1),\ \ \  \cM_A(1) = \cM(1),
\end{equation}
for all constant $A \geq A_0$. 
Therefore, $\cM_A(0) = \cM_A(1) =0$.

Thanks to the convexity and continuity of the truncated Mabuchi functional, 
we conclude that $\cM_A (t)\leq 0$ for each $t\in [0,1]$.
However, we also know that $\cM_A\geq \cM$ from the definition of the truncated Mabuchi functional.
Therefore, the truncated Mabuchi functional is also identically zero along the geodesic $\cG$, i.e. 
$\cM_A(\varphi_t) \equiv 0,  $ for each $t\in [0,1]$.
\end{proof}

An immediate consequence of Lemma {\ref{lem-wg-001}} is that the truncated entropy $H_A(\varphi_t)$
also coincides with the entropy $H(\varphi_t)$ along the geodesic $\cG$ for each $A\geq A_0$.  
Therefore, it gives a way to describe the degenerate locus of the geodesic along each fiber $X_t$.

\begin{prop}
\label{prop-wg-001}
Suppose the Mabuchi functional is linear along a geodesic $\cG$.
Then on each fiber $X_t$, there exists a non-empty subset $\cZ_t$ and a uniform constant $A_0$,
such that the following holds up to a set of measure zero:
$$ \omega_{\varphi_t}^n \geq e^{\chi -A_0} \omega^n,$$
on $\cZ_t$, and $\omega_{\varphi_t}^n = 0$ on $X_t - \cZ_t$.
\end{prop}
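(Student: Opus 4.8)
The plan is to combine the linearity of $\cM$ along $\cG$ with the strict convexity already available for the \emph{truncated} functional $\cM_A$, and then to read off the pointwise structure of $\omega_{\varphi_t}^n$ from the entropy comparison. First I would invoke Lemma~\ref{lem-wg-001}: since $\cM$ is linear along $\cG$, there is $A_0$ so that $\cM_A \equiv \cM$ along $\cG$ for every $A\geq A_0$, and hence (as noted right after that lemma) $H_A(\varphi_t) = H(\varphi_t)$ for all $t$ and all $A \geq A_0$. Writing $H_A(\varphi_t) - H(\varphi_t) = \int_X f_{\varphi_t}\,(\log f_{A_0} - \log f_{\varphi_t})\,\omega^n$ and recalling $f_{A_0} = \max\{f_{\varphi_t}, e^{\chi-A_0}/\omega^n\} \geq f_{\varphi_t}$, the integrand is pointwise $\geq 0$; so $H_{A_0}(\varphi_t) = H(\varphi_t)$ forces $f_{\varphi_t}\,(\log f_{A_0} - \log f_{\varphi_t}) = 0$ a.e. on $X_t$. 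Thus at a.e.\ point either $f_{\varphi_t} = 0$ (i.e.\ $\omega_{\varphi_t}^n = 0$ there) or $\log f_{A_0} = \log f_{\varphi_t}$, which by definition of $f_{A_0}$ means $f_{\varphi_t} \geq e^{\chi-A_0}/\omega^n$, i.e.\ $\omega_{\varphi_t}^n \geq e^{\chi-A_0}\omega^n$.

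Next I would define $\cZ_t$ to be the set (well-defined up to measure zero) where $\omega_{\varphi_t}^n \geq e^{\chi-A_0}\omega^n$ holds; by the dichotomy above, $\omega_{\varphi_t}^n = 0$ a.e.\ on $X_t \setminus \cZ_t$, which is exactly the two claimed properties. It remains to check $\cZ_t \neq \emptyset$. This is where I expect the only real content to sit: one must rule out the possibility that $\omega_{\varphi_t}^n$ vanishes a.e.\ on the whole fiber $X_t$. The clean way is a total-mass argument: $\int_{X_t} \omega_{\varphi_t}^n = \int_X (\omega + i\ddbar_X\varphi_t)^n = \int_X \omega^n = \Vol([\omega]) > 0$ is a cohomological invariant, independent of $t$, since $\varphi_t$ is a (at least $C^{1,1}$) K\"ahler potential on $X_t$ and the Monge--Amp\`ere mass is a closed positivity-stable invariant in the fixed class $[\omega]$. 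Hence $\omega_{\varphi_t}^n$ cannot be a.e.\ zero, so the set where it is positive — a fortiori a subset of $\cZ_t$ cannot be missed, but more precisely: on $X_t\setminus\cZ_t$ the form vanishes a.e., so the positive mass is carried entirely on $\cZ_t$, forcing $\cZ_t$ to have positive measure, in particular to be non-empty.

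The main obstacle is making the total-mass step rigorous in only $C^{1,1}$ regularity: $\omega_{\varphi_t}^n$ is then a measure defined via the Bedford--Taylor/Monge--Amp\`ere machinery rather than a smooth top form, so one should cite that for a $C^{1,1}$ (indeed bounded) $\omega$-psh potential the total Monge--Amp\`ere mass equals $\int_X\omega^n$, and that the restriction $\varphi_t = \Phi|_{X_t}$ is genuinely $\omega$-psh on each fiber (which follows from $\cG = \pi^*\omega + dd^c\Phi \geq 0$). Both facts are standard and already implicit in the setup of \cite{BB}, so I would simply quote them; everything else is the elementary pointwise measure-theoretic argument above. One small bookkeeping point: the constant $A_0$ produced by Lemma~\ref{lem-wg-001} depends only on the two boundary potentials $\varphi_0,\varphi_1$ and not on $t$, which is exactly what ``uniform constant $A_0$'' in the statement refers to, so no further uniformity argument is needed.
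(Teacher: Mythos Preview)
Your proposal is correct and follows essentially the same route as the paper: both use Lemma~\ref{lem-wg-001} to obtain $H_{A_0}(\varphi_t)=H(\varphi_t)$, write the difference as the integral of a nonnegative density, and read off the a.e.\ dichotomy together with the total-volume argument for nonemptiness of $\cZ_t$. Your direct pointwise argument (using that $f_{\varphi_t}(\log f_{A_0}-\log f_{\varphi_t})\geq 0$ with the convention $0\log 0=0$) is in fact a mild streamlining of the paper's version, which reaches the same conclusion via the auxiliary level sets $E_A=\{f_{\varphi}\geq e^{\chi-A}/\omega^n\}$ and shows $E_A\setminus E_{A_0}$ is null for every $A>A_0$.
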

\begin{proof}
Denote the set $\cZ_t$ by the non-zero locus of the function $f_{\varphi}$ on the fiber $X_t$.
It is a non-empty set since the integral of $\omega_{\varphi_t}^n$ is the fixed volume of this K\"ahler metric.
Consider the level sets of the function $f_{\varphi}$ on a fiber $X_t$ as 
$$ E_A: =\{ p \in X_t ;\  f_{\varphi} (p) \geq e^{\chi(p) -A} / \omega^n \}. $$

We claim that $E_A - E_{A_0}$ has measure zero for each $A > A_{0}$.
Then we have up to a set of measure zero 
$$ \cZ_t = \bigcup_{A \geq A_{0}} E_A = E_{A_0},$$
and our result follows.

Now we have on each fiber $X_t$ 
\begin{eqnarray}
\label{wg-002}
0= H_{A_0} (\varphi_t) - H(\varphi_t) &=& \int_X (\log f_{A_0} - \log f) f 
\nonumber\\
&\geq&  \int_{E_A } (\log f_{A_0} - \log f) f 
\nonumber\\
&\geq& \ep  \int_{E_A } (\log f_{A_0} - \log f),
\end{eqnarray}
but this inequality implies that $f_{A_0} = f$ almost everywhere on $E_A$
since $\log f_A \geq \log f$. Therefore, the set $E_{A} - E_{A_0}$ indeed has measure zero,
and we complete the claim. 

\end{proof}

In other words, the value of the volume form radio $f_{\vp}$ on the geodesic  
is either larger than a positive constant $\ep_0$ or equal to zero almost everywhere,
provided the Mabuchi functional is linear. 
That is to say, there exists a \emph{gap} for the value of the volume form radio on $X\times \Sigma$. 

\subsection{$W^{1,2}$ estimate}
The next step is to investigate the regularities of the volume form radio $f_{\varphi} = \omega^n_{\vp}/\omega^n$. 
Here we will invoke the following Theorem from Chen-Tian \cite{CT} to estimate the $W^{1,2}$ norm of $f_{\vp}$. 
However, it requires that the Mabuchi functional realizes its minimum, if the minimum exists. 

\begin{prop}[Chen-Tian]
\label{thm-wg-001}
Suppose $\varphi\in PSH(X, \omega)$ has $C^{1,1}$ regularities,
and the Mabuchi functional has a uniform lower bound in this K\"ahler class.
Meanwhile, if the energy $\cM(\varphi)$ realizes this minimum, 
then the square root of the volume form radio $ f_{\vp}^{1/2} $ is in $W^{1,2}(X,\omega)$. 
\end{prop}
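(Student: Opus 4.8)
The plan is to realize $f_\vp^{1/2} \in W^{1,2}$ by the standard Chen--Tian device of differentiating along the Ricci continuity path (or equivalently the $\varepsilon$-geodesic/perturbed Monge--Amp\`ere flow) and extracting the $W^{1,2}$ bound from the second variation of the Mabuchi functional. Concretely, since $\cM$ attains its minimum at $\vp$ in $PSH(X,\omega)\cap C^{1,1}$, the first variation vanishes at $\vp$ in every admissible direction, and the second variation is non-negative. The first step is therefore to write down the second-variation formula: for a smooth path $\vp_s$ with $\vp_0=\vp$ and $\dot\vp_0 = u$,
\begin{equation}
\label{wg-pf-001}
\frac{d^2}{ds^2}\Big|_{s=0} \cM(\vp_s) = \int_X |\dbar \nabla u|^2_{\omega_\vp}\,\omega_\vp^n + \int_X \big(\ddot\vp_0 - |\nabla\dot\vp_0|^2_{\omega_\vp}\big)\big(\underline R - R(\omega_\vp)\big)\,\omega_\vp^n,
\end{equation}
where $\nabla$ is the $(1,0)$-gradient with respect to $\omega_\vp$. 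The point is that the Euler--Lagrange equation of $\cM$ is $R(\omega_\vp) = \underline R$ interpreted in the weak/pluripotential sense, so the obstruction to this being literally the cscK equation is precisely the low regularity of $\vp$.

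The second step is to make sense of \eqref{wg-pf-001} when $\vp$ is only $C^{1,1}$. Here I would use the fact that $\vp$ being a minimizer forces the \emph{distributional} scalar curvature, defined via $R(\omega_\vp)\,\omega_\vp^n = n\,\Ricci(\omega)\wedge\omega_\vp^{n-1} - \Delta_{\omega_\vp}(\log f_\vp)\,\omega_\vp^n$, to equal $\underline R$ as a measure; in particular $\Delta_{\omega_\vp}\log f_\vp$ is a bounded measure, and a test against $\log f_\vp$ itself (suitably truncated, using $\max\{\log f_\vp, -A\}$ and letting $A\to\infty$ as in the truncated-entropy technique already used in the paper) gives
\begin{equation}
\label{wg-pf-002}
\int_X |\dbar \log f_\vp|^2_{\omega_\vp}\,\omega_\vp^n \;\le\; C\big(\omega, \Ricci(\omega), \underline R\big) + \text{(boundary terms that vanish in the limit)}.
\end{equation}
Since $\omega_\vp \le C\,\omega$ by the $C^{1,1}$ bound, $|\dbar\log f_\vp|^2_{\omega_\vp} \ge C^{-1}|\dbar\log f_\vp|^2_\omega$, and $|\dbar\log f_\vp|^2_\omega\,\omega_\vp^n = |\dbar\log f_\vp|^2_\omega f_\vp\,\omega^n = 4|\dbar f_\vp^{1/2}|^2_\omega\,\omega^n$ pointwise wherever $f_\vp>0$, which by the gap phenomenon (Proposition \ref{prop-wg-001}, once we are in the geodesic setting; in the present generality one argues directly) is almost everywhere relevant. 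This yields $\int_X |\dbar f_\vp^{1/2}|^2_\omega\,\omega^n < \infty$, and combined with $f_\vp^{1/2} \in L^2$ (which is immediate since $\int_X f_\vp\,\omega^n = \int_X\omega_\vp^n$ is the fixed volume) gives $f_\vp^{1/2}\in W^{1,2}(X,\omega)$.

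The main obstacle is justifying \eqref{wg-pf-002} rigorously at $C^{1,1}$ regularity — i.e. the integration by parts that produces the gradient term. The clean way around it is an approximation argument: take smooth $\omega$-psh $\vp_j \to \vp$ (e.g. via the $\varepsilon$-geodesic construction of Chen, or Demailly regularization combined with the a priori $C^{1,1}$ estimates) for which the computation is legitimate, obtain \eqref{wg-pf-002} with a constant independent of $j$, and pass to the limit using weak lower semicontinuity of the Dirichlet energy together with the uniform upper bound on $\omega_{\vp_j}$ and on $d\mu_j := \omega_{\vp_j}^n/\omega^n$ (the latter available from Remark \ref{rem-sc-001}). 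Controlling the error term $\int(\ddot\vp - |\nabla\dot\vp|^2)(\underline R - R)\,\omega_\vp^n$ uniformly along the approximation — it is this term, not the manifestly good $|\dbar\nabla u|^2$ term, that is delicate — is where the minimizing hypothesis does the essential work: at the minimum one may choose the variation to be the $\varepsilon$-geodesic emanating from $\vp$, along which $\ddot\vp - |\nabla\dot\vp|^2 = \varepsilon f_\vp$ by construction, turning the error term into something that is $O(\varepsilon)$ after using the measure bound and letting $\varepsilon\to 0$.
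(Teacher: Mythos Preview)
The paper does not supply its own proof of this proposition: it is quoted from Chen--Tian \cite{CT}, with only the remark that the argument proceeds via the \emph{weak K\"ahler--Ricci flow} and that the crucial step is a uniform lower bound on $\partial\cM/\partial t|_{t=0}$ along the flow direction. So there is no detailed paper-proof to compare against, only the indicated method.

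Your approach is close in spirit: both routes ultimately want the identity obtained by pairing the weak Euler--Lagrange equation with $\log f_\vp$, which produces $\int_X |\nabla\log f_\vp|^2_{\omega_\vp}\,\omega_\vp^n$ on one side and controlled curvature terms on the other. Chen--Tian realize this pairing dynamically, by computing $\tfrac{d}{dt}\cM$ along the Ricci flow $\dot\vp=\log f_\vp$ --- for $t>0$ the flow is smooth, the computation is honest, and the minimizing hypothesis at $t=0$ supplies the sign needed to close the estimate in the limit $t\to 0^+$. You instead try to work directly at $t=0$ via the second variation and $\varepsilon$-geodesics.

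Two genuine gaps. First, the $\varepsilon$-geodesic equation is not $\ddot\vp-|\nabla\dot\vp|^2=\varepsilon f_\vp$; it is $(\ddot\vp-|\nabla\dot\vp|^2_{\omega_\vp})\,\omega_\vp^n=\varepsilon\,\Omega$ for a fixed smooth volume form $\Omega$, i.e.\ $c(\vp)=\varepsilon f_\vp^{-1}$, which is \emph{large}, not small, precisely on the degenerate set $\{f_\vp=0\}$ --- so your ``$O(\varepsilon)$'' claim for the error term fails exactly where it matters. Second, and more structurally, your formula \eqref{wg-pf-001} and the assertion that the distributional scalar curvature equals $\underline R$ presuppose that $R(\omega_\vp)$ makes sense and that the weak cscK equation holds; in the paper this equation (equation \eqref{wg-003}) is derived \emph{only after} non-degeneracy is established, and non-degeneracy is in turn deduced \emph{from} the present proposition via Proposition~\ref{prop-wg-002}. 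Your argument is therefore circular as written. The flow approach avoids this: smoothness for $t>0$ means no weak equation at $\vp$ is ever needed.
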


This regularity theorem is proved via the so called ``\emph {weak K\"ahler Ricci flow }",
and a crucial step is to observe that the first derivative of the energy $ \frac{\d\cM }{\d t} |_{t=0}$
is bounded from below uniformly along the flow direction. 
However, this is unlikely to be true if we merely assume the Mabuchi functional is linear. 

Combining with the ``gap phenomenon" proved in last section, this $W^{1,2}$ estimate enable us to conclude
the following uniform non-degeneracy of the geodesic, provided the two boundary datas are non-degenerate. 

\begin{prop}
\label{prop-wg-002}
Suppose the following conditions hold:
\begin{itemize}
\item
the Mabuchi functional $\cM$ is linear along a $C^{1,1}$ geodesic $\cG$;
\item
the two boundaries of $\cG$ are non-degenerate;
\item
for some $\a >0$, assume that
$f_{\vp_t}^{\a}$ is in  $W^{1,2}(X_t)$ for each $t\in [0,1]$.
\end{itemize}

Then there exists a uniform constant $\ep_0$, such that we have
$$  f_{\vp_t} > \ep_0,$$
everywhere for each $X_t$.
\end{prop}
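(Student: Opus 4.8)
The plan is to argue by contradiction, combining the ``gap phenomenon'' of Proposition \ref{prop-wg-001} with the $W^{1,2}$-type regularity hypothesis on a power of the volume form ratio. Suppose the conclusion fails. Then $\inf_{X_t}f_{\vp_t}=0$ for some $t$, so by Proposition \ref{prop-wg-001} the degenerate locus $X_t\setminus\cZ_t$ is a set of positive $\omega^n$-measure on which $f_{\vp_t}=0$, while on $\cZ_t$ we have $f_{\vp_t}\ge e^{\chi-A_0}/\omega^n\ge \ep_0$ for a uniform $\ep_0>0$ depending only on $A_0$ and the fixed continuous metric $\chi$. In particular $f_{\vp_t}^{\a}$ equals $0$ on a set of positive measure and is bounded below by $\ep_0^{\a}$ on its complement: it is a nontrivial characteristic-type function, $f_{\vp_t}^{\a}=\ep_0^{\a}\mathbf 1_{\cZ_t}+(\text{something}\ge\ep_0^{\a})\mathbf 1_{\cZ_t}$, jumping discontinuously across $\partial\cZ_t$.

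The key step is then to invoke the standard fact that a $W^{1,2}$ function on a connected manifold cannot take a value on a set of positive measure while being bounded away from that value on the complement, unless one of the two sets is null. More precisely, if $u:=f_{\vp_t}^{\a}\in W^{1,2}(X_t)$, consider the truncation argument: for $0<c<\ep_0^{\a}$ the function $v:=\min\{u,c\}\in W^{1,2}(X_t)$ has $\nabla v=0$ a.e. (since a.e.\ either $u=0$, where $v\equiv 0$, or $u\ge\ep_0^{\a}>c$, where $v\equiv c$, and $\nabla u=0$ a.e.\ on the level set $\{u=0\}$ by the locality of the weak gradient), hence $v$ is a.e.\ constant by connectedness of $X_t$; but $v$ takes both values $0$ and $c$ on sets of positive measure — the first on $X_t\setminus\cZ_t$, the second on $\cZ_t$ — a contradiction. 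Since the fiber $X_t$ is compact, connected and K\"ahler this is legitimate. This forces either $\cZ_t$ or its complement to be null; the complement cannot be null since $\int_{X_t}\omega_{\vp_t}^n$ is the fixed positive volume, so $X_t\setminus\cZ_t$ is null, i.e.\ $f_{\vp_t}\ge\ep_0$ a.e., and since $f_{\vp_t}$ is continuous (the geodesic is $C^{1,1}$), in fact $f_{\vp_t}>\ep_0$ everywhere after possibly shrinking $\ep_0$ slightly, or one uses the open condition directly.

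There remains the issue of getting the $W^{1,2}$ hypothesis from the energy-minimizer assumption, but that is exactly what Proposition \ref{thm-wg-001} of Chen–Tian supplies with $\a=1/2$, once one knows the boundary potentials realize the minimum of the Mabuchi functional in their K\"ahler class; this is part of the setup and can be cited. The main obstacle I expect is the \emph{uniformity} of $\ep_0$ across all $t\in[0,1]$: the constant $A_0$ from the gap phenomenon is already uniform (Proposition \ref{prop-wg-001}), so $\ep_0=\inf_{X\times\Sigma}e^{\chi-A_0}/\omega^n>0$ is uniform by compactness and continuity of $\chi$ and $\omega^n$ on the compact total space; the only subtlety is that the $W^{1,2}(X_t)$ control is assumed fiberwise, and one must make sure the contradiction is run on each individual fiber — which is fine, since the argument above is purely fiberwise and the resulting bound $f_{\vp_t}>\ep_0$ has $\ep_0$ independent of $t$. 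I would therefore present the proof as: (i) fix $\ep_0$ via the uniform $A_0$; (ii) on a bad fiber, extract the positive-measure null-gradient contradiction from $W^{1,2}$ and connectedness; (iii) conclude $f_{\vp_t}>\ep_0$ on every fiber.
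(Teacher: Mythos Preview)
Your argument is correct and shares the paper's overall architecture: invoke the gap phenomenon (Proposition \ref{prop-wg-001}) to obtain a uniform $\ep_0$, then rule out the possibility that a $W^{1,2}$ function on a connected manifold jumps between $0$ and values $\geq\ep_0^{\a}$. The difference is in how this last step is carried out. The paper isolates it as an appendix lemma (Lemma \ref{lem-app2-001}) and proves it by a Fubini-and-slicing argument: restrict $u$ to almost every coordinate line, where one-dimensional Sobolev embedding forces continuity and hence constancy, then iterate over directions. Your truncation $v=\min\{u,c\}$, combined with $\nabla v=0$ a.e.\ by locality of the weak gradient on level sets and the connectedness of $X_t$, is the more standard and cleaner route, avoiding both the local coordinate covers and the inductive slicing. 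Two small slips to fix: you write ``the complement cannot be null'' where you mean ``$\cZ_t$ cannot be null'' (it is $\cZ_t$ that carries the fixed positive volume, whence $X_t\setminus\cZ_t$ must be null); and $f_{\vp_t}$ need not be continuous for a $C^{1,1}$ geodesic (the second derivatives of $\vp_t$ are only $L^\infty$), so the conclusion is $f_{\vp_t}\geq\ep_0$ almost everywhere --- which is also all the paper's argument actually delivers, despite the ``everywhere'' phrasing.
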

\begin{proof}
For any fiber $X_t$, put $u =  f_{\vp}^{\a}$. 
Thanks to Proposition (\ref{prop-wg-001}), there exists a uniform constant $\ep_0$,
such that the gap phenomenon occurs for the value of the function $u$ almost everywhere on $X_t$. 
Since the function $u$ itself is in $L^{\infty}$, we can further assume that 
the gap exists for each point $p\in X_t$, i.e. either
$u(p) > \ep_0$ or $u(p)=0$. 
Moreover, the function $u$ is also non-trivial since $\int_{X_t} u^2 =1$. 

Let $U_i \subset V_i \subset U'_i$ be open coverings of $X_t$, 
such that $U_i$ corresponds to a ball $B_1$ with radius $1/2$, $U'_i$ corresponds to a ball $B_2$ with radius $2$,
and $V_i$ corresponds to the $n$-interval $[0,1]^n$ in each local coordinate.  
Then a standard regularity argument (see Lemma (\ref{lem-app2-001})) implies that each restriction $u_i: = u|_{V_i}$
is either trivial or greater than $\ep_0$ everywhere on $V_i$. 
Since $U_i$ forms an open converging of $X_t$, only the latter situation can occurs for each $u_i$.  
Then our result follows.

\end{proof}

As a simple corollary of Propositions (\ref{prop-wg-001}) and (\ref{prop-wg-002}),
we proved the following.

\begin{corollary}
Any $C^{1,1}$ geodesic $\cG$ connecting two non-degenerate energy minimizers of the Mabuchi functional 
must be uniformly non-degenerate.
\end{corollary}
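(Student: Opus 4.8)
The plan is to feed the hypotheses of this corollary into Proposition (\ref{prop-wg-002}) after first upgrading ``connecting two non-degenerate energy minimizers'' to the three bullet conditions appearing there. First I would observe that if $\vp_0,\vp_1$ are energy minimizers then $\cM(\vp_0)=\cM(\vp_1)=\inf_{\cH}\cM$. Since $\cM$ is convex along $\cG$ and continuous up to the boundary (Berman--Berndtsson \cite{BB}, Chen--Li--P\u aun \cite{CLP}), the function $t\mapsto\cM(\vp_t)$ is convex on $[0,1]$ with both endpoint values equal to the global infimum; hence it is constant, in particular affine (``linear''), along $\cG$. A byproduct is that \emph{every} interior potential $\vp_t$ also realizes $\inf_{\cH}\cM$, i.e. each $\vp_t$ is itself an energy minimizer. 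This establishes bullet (1), and bullet (2) is part of the hypothesis.

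Next I would verify bullet (3) with $\a=1/2$. Because $\cG$ is a $C^{1,1}$ metric on $X\times\Sigma$, each restriction $\vp_t$ is a $C^{1,1}$ potential on the fibre $X_t$. Energy minimizers exist in this class, so $\cM$ admits a uniform lower bound on $PSH(X,\omega)$, and by the previous step $\cM(\vp_t)$ attains that bound for each $t$. The Chen--Tian regularity theorem (Proposition (\ref{thm-wg-001})) then yields $f_{\vp_t}^{1/2}\in W^{1,2}(X_t,\omega)$ for every $t\in[0,1]$, which is exactly bullet (3). With all three bullets in hand, Proposition (\ref{prop-wg-002}) produces a uniform $\ep_0>0$ with $f_{\vp_t}>\ep_0$ everywhere on each $X_t$; by definition this says $\cG$ is uniformly non-degenerate, completing the proof.

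I expect the main subtlety to be the claim that each interior $\vp_t$ is an energy minimizer (so that Chen--Tian genuinely applies along the whole geodesic, not merely at the two endpoints). This rests entirely on the constancy of $\cM$ along $\cG$, which in turn needs both the convexity of $\cM$ along $C^{1,1}$-geodesics and its continuity up to the boundary; one should be careful that the normalization-invariant $\cM=E+H$ used in this section is the one for which those results of \cite{BB} hold, and that the value $\inf_{\cH}\cM$ coincides with the value of this extended functional on $C^{1,1}$ potentials. A second, more routine point is checking that the fibrewise $C^{1,1}$ bound needed by Proposition (\ref{thm-wg-001}) is inherited from the ambient $C^{1,1}$ regularity of $\cG$; this is immediate since restriction to a smooth submanifold does not destroy $C^{1,1}$ control.
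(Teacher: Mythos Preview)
Your proposal is correct and follows essentially the same route the paper has in mind: the paper states the corollary as an immediate consequence of Propositions (\ref{prop-wg-001}) and (\ref{prop-wg-002}) together with the Chen--Tian $W^{1,2}$ estimate (Proposition (\ref{thm-wg-001})), and your argument is precisely the verification that the three bullets of Proposition (\ref{prop-wg-002}) hold---the key step being, as you identify, that convexity and boundary-continuity of $\cM$ force $\cM(\vp_t)\equiv\inf\cM$, so every interior $\vp_t$ is itself a $C^{1,1}$ energy minimizer and Chen--Tian applies fibrewise. The subtleties you flag (matching the extended $\cM$ on $C^{1,1}$ potentials with the one for which \cite{BB} proves convexity/continuity, and inheriting fibrewise $C^{1,1}$ from the ambient regularity) are genuine but routine, exactly as you say.
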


\section{Energy minimizers and smoothness}

Due to the work of Chen-Tian (Section 8, \cite{CT})
or the recent work of He-Zeng \cite{HZ},
a non-degenerate $C^{1,1}$ energy minimizer of the Mabuchi functional 
is indeed smooth and satisfies the $cscK$ equation.
For the convenience of the readers, we recall an argument here. 
The first step is to deduce $cscK $ equation in the weak sense for the energy minimizers.

\begin{lemma}
Suppose $\omega_{\vp}$ is a non-degenerate $C^{1,1}$-energy minimizer of the Mabuchi functional. 
Then for any locally supported test function $\chi$, it satisfies 
\begin{equation}
\label{wg-003}
\int_X \log f_{\vp} i\ddbar \chi\wedge \omega_{\vp}^{n-1} = \int_X \chi (Ric (\omega ) - \omega_{\vp}) \wedge \omega_{\vp}^{n-1}.
\end{equation} 
\end{lemma}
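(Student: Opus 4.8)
The plan is to derive the weak $cscK$ equation by perturbing the energy minimizer $\vp$ inside the space of $C^{1,1}$ K\"ahler potentials and using that the Mabuchi functional attains its minimum at $\vp$, so the first variation in any admissible direction vanishes. Concretely, I would fix a locally supported smooth test function $\chi$, and for small real $s$ consider the path $\vp_s := \vp + s\chi$. Since $\omega_\vp > 0$ is non-degenerate and $\chi$ has compact support, $\omega_{\vp_s} = \omega_\vp + s\,i\ddbar\chi > 0$ for $|s|$ small, so $\vp_s$ stays in the space of $C^{1,1}$ K\"ahler potentials. Because $\vp$ is an energy minimizer we get $\frac{d}{ds}\big|_{s=0}\cM(\vp_s) = 0$, and the content of the lemma is the identification of this derivative with the difference of the two integrals in \eqref{wg-003}.

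The main computation is therefore to differentiate $\cM(\vp_s) = E(\vp_s) + H(\vp_s)$ at $s=0$. For the energy part $E = \frac{R}{n+1}\cE - \cE^{\Ricci\omega}$ one uses the standard first-variation formulas: $\frac{d}{ds}\cE(\vp_s) = \int_X \chi\,\omega_{\vp_s}^n$ and $\frac{d}{ds}\cE^{\a}(\vp_s) = n\int_X \chi\,\omega_{\vp_s}^{n-1}\wedge\a$, which at $s=0$ combine into $\int_X \chi\big(\underline R\,\omega_\vp^n - n\,\Ricci(\omega)\wedge\omega_\vp^{n-1}\big)$; rewriting $\underline R\,\omega_\vp^n$ via the cohomological identity $\int_X \underline R\,\omega_\vp^n = n\int_X \Ricci(\omega)\wedge\omega_\vp^{n-1}$ only up to a closed correction, one should be careful and instead just keep $\frac{d}{ds}E$ in the form $\int_X \chi\,\big(\underline R\,\omega_\vp^n - n\,\Ricci(\omega)\wedge\omega_\vp^{n-1}\big)$. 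For the entropy part, write $H(\vp_s) = \int_X \log f_{\vp_s}\,\omega_{\vp_s}^n - \int_X \log\omega^n\,\omega_{\vp_s}^n$ with $f_{\vp_s} = \omega_{\vp_s}^n/\omega^n$; differentiating, the term $\int_X \frac{d}{ds}(\log f_{\vp_s})\,\omega_{\vp_s}^n = \int_X \frac{d}{ds}(\omega_{\vp_s}^n)/f_{\vp_s}\cdot(1/\omega^n)\cdot\omega_{\vp_s}^n$ collapses to $\int_X \frac{d}{ds}\omega_{\vp_s}^n = n\int_X i\ddbar\chi\wedge\omega_{\vp_s}^{n-1} = 0$ by Stokes, so only $\int_X \log f_{\vp_s}\,\frac{d}{ds}\omega_{\vp_s}^n$ and the analogous $\log\omega^n$ term survive, giving $n\int_X \log f_\vp\, i\ddbar\chi\wedge\omega_\vp^{n-1} - n\int_X \log\omega^n\, i\ddbar\chi\wedge\omega_\vp^{n-1}$; an integration by parts moves $i\ddbar$ off $\chi$ in the second term onto $\log\omega^n$, producing $-n\int_X \chi\, i\ddbar\log\omega^n\wedge\omega_\vp^{n-1} = n\int_X \chi\,\Ricci(\omega)\wedge\omega_\vp^{n-1}$. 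Collecting everything, the $\Ricci(\omega)$ contributions cancel appropriately and one is left with $n\int_X \log f_\vp\, i\ddbar\chi\wedge\omega_\vp^{n-1} = \int_X \chi\big(\Ricci(\omega) - \omega_\vp\big)\wedge\omega_\vp^{n-1}$ after absorbing the constant $n$ into a normalization (or, more honestly, checking the combinatorial constants so that they match the stated \eqref{wg-003}).

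The genuine obstacle is \emph{regularity}: $\vp$ is only $C^{1,1}$, so $\log f_\vp$ is merely an $L^\infty$ (in fact, by the gap phenomenon of Proposition~\ref{prop-wg-001}, essentially bounded) function and not differentiable, and the manipulations above — especially differentiating $\log f_{\vp_s}$ under the integral sign and the integration by parts $\int \log f_\vp\, i\ddbar\chi\wedge\omega_\vp^{n-1}$ — must be justified distributionally rather than classically. I would handle this by a regularization argument: approximate $\vp$ by smooth $\vp^\epsilon \downarrow \vp$ with $\omega_{\vp^\epsilon} \to \omega_\vp$ in a strong enough sense (uniform $C^{1,1}$ bounds plus a.e. convergence of the Hessians), perform all the smooth computations for $\vp^\epsilon_s := \vp^\epsilon + s\chi$, and then pass to the limit using dominated convergence — legitimate because $\omega_\vp$ is uniformly non-degenerate (the corollary at the end of the previous section), so $f_\vp$ is bounded away from $0$ and $\infty$ and $\log f_{\vp^\epsilon}$ stays uniformly bounded. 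The difference-quotient in $s$ is controlled uniformly in $\epsilon$ by the same non-degeneracy, so the order of limits in $s$ and $\epsilon$ can be exchanged, and the minimality of $\vp$ (which passes to the limit since $\cM$ is continuous on $C^{1,1}$ potentials by \cite{BB}) yields the vanishing of the first variation. Once the integral identity is established against all smooth compactly supported $\chi$, it is the asserted weak $cscK$ equation.
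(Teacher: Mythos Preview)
Your approach is correct and shares the paper's starting idea --- perturb the minimizer and read off the Euler--Lagrange equation from the vanishing first variation --- but the handling of low regularity is different. The paper regularizes $\vp$ to smooth $\vp(s)$ via Demailly's theorem (using non-degeneracy to ensure $\vp$ is $(1-\ep')\omega$-psh, so the smooth approximants stay strictly $\omega$-psh with uniform $C^{1,1}$ bounds and $W^{2,p}$ convergence), sets $\vp(s,t) = \vp(s) + t\chi$, and then argues \emph{one-sidedly}: from $\cM(\vp(s,0)) \to \cM(\vp) \leq \cM(\vp(s,t))$ one extracts a sequence $(s_i,t_i) \to 0$ along which $\partial_t\cM|_{(s_i,t_i)} \geq -c$; since $\vp(s_i,t_i)$ is smooth the classical first-variation formula holds there, and $W^{2,p}$ convergence passes it to the limit, yielding one inequality. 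Replacing $\chi$ by $-\chi$ gives the reverse. This cleanly sidesteps both the exchange of the regularization- and $s$-limits and any direct differentiability claim for $\cM$ at the non-smooth $\vp$, which is exactly the step you leave sketched. Your two-sided route can also be made to work --- indeed the regularization you propose is not strictly needed here: by non-degeneracy $s \mapsto f_{\vp_s}$ is a polynomial in $s$ with $L^\infty$ coefficients bounded uniformly away from $0$ and $\infty$, so $s\mapsto\cM(\vp + s\chi)$ is directly differentiable by dominated convergence, and the only integration by parts (moving $i\ddbar$ between the smooth functions $\chi$ and $\log\omega^n$ across the closed $L^\infty$ current $\omega_\vp^{n-1}$) is standard. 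The paper's one-sided mean-value device is more robust in principle (it would survive one-sided or constrained variations); your direct two-sided approach is shorter once one notices the regularization can be dropped.
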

\begin{proof}
It is easy to see that the potential $\vp$ is a strictly $\omega$-$psh$ function on $X$,
and we actually can gain a bit more than this from the non-degeneracy. 
The non-degenerate condition on the volume form $f_{\vp} > \ep$ and the upper bound of the coefficients of the metric $\omega_{\vp}$ together implies the lower bound of the metric, i.e. $\omega_{\vp} > \ep' \omega$ for some small $\ep' >0$.
Therefore, the potential $\vp$ is actually $(1- \ep' )\omega$-$psh$ on $X$.

According to Demailly's regluarization theorem \cite{Dem},
there exists a sequence $\vp(s)\in PSH^{\infty}(X, (1-\ep')\omega)$, such that
$\vp(0) = \vp$, $|\vp(s)|_{C^{1,1}} < C$, and $\vp(s)$ converges to $\vp$ in $W^{2,p}$ for any $p$ large.

For any small $s>0$, we want to construct a smooth curve $\vp(s, t)\in PSH^{\infty}(X,\omega)$ initiated from $\vp(s)$,
such that it satisfies 
\begin{equation}
\label{wg-004}
\left. \frac{\d \vp(s,t)}{\d t}  \right|_{t=0} = \chi,
\end{equation}
and it exists for $t\in[0,\delta)$ for some uniform small constant $\delta$.
In fact, one can check that a linear combination $\vp(s,t): =\vp(s) + t \chi $ would work, since we have
$$\omega + i\ddbar\vp(s,t) > \ep'\omega+ t i\ddbar\chi \geq 0, $$
for all $t$ small enough. 

Since the potential $\vp$ is an energy minimizer, we have 
$$ \cM(\vp) =  \lim_{s\rightarrow 0} \cM(\vp(s,0)) \leq \cM(s,t), $$
for any $s>0, t\geq 0$. Therefore, for any small constant $c >0$, 
there exists a sequence of points $s_i, t_i \rightarrow 0$ such that we have 
\begin{eqnarray}
\label{wg-005}
-c &\leq& \left. \frac{\d \cM(s,t)}{\d t} \right|_{s_i,t_i}
\nonumber\\
&=& \int_X \log f_{\vp} ( \Delta_{\vp} \chi ) \omega_{\vp}^n |_{s_i,t_i} - \int_X \chi (Ric(\omega)- \omega_{\vp})\wedge \omega_{\vp}^{n-1}|_{s_i,t_i}.
\end{eqnarray}
Since each coefficient of $\omega_{\vp(s_i,t_i)}$ converges strongly to $\omega_{\vp}$ in $L^p$,
we conclude the following one side inequality by letting $s_i, t_i \rightarrow 0$
\begin{equation}
\label{wg-006}
\int_X \log f_{\vp} i\ddbar \chi\wedge \omega_{\vp}^{n-1} \leq \int_X \chi (Ric (\omega ) - \omega_{\vp}) \wedge \omega_{\vp}^{n-1}.
\end{equation}
Observe that both sides of our equation (\ref{wg-005}) are linear in $\chi$,
and then we obtain an inequality with reversed sign of (\ref{wg-006}) by putting $\tilde\chi = - \chi$.
Hence the desired equation follows.

\end{proof}

Since the volume form radio $f_{\vp}$ is in $W^{1,2}$,
we can rewrite our equation (\ref{wg-005}) locally in the weak sense as 
\begin{equation}
\label{wg-007}
\frac{1}{f_{\vp}}\frac{\d}{\d\bar z^{\b}} \left( g^{\bar\b\a}_{\vp} f_{\vp} \frac{\d}{\d  z^{\a}} \log f \right) = 
g_{\vp}^{\bar\b\a} R_{\a\bar\b} -n.
\end{equation}
This is a second order elliptic differential equation, 
and then the smoothness of the potential $\vp$ is obtained by boot strapping the regularities as in \cite{CT}. 
Finally we have proved the following 

\begin{theorem}
\label{wg-thm-001}
Suppose $\cG$ is a $C^{1,1}$ geodesic connecting two non-degenerate energy minimizers of the Mabuchi functional.
Then the restriction of the geodesic $\omega_{\vp_t}$ to each fiber $X_t$ is a smooth $cscK$ metric for all $t\in [0,1]$.
\end{theorem}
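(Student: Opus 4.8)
The plan is to combine the non-degeneracy estimate of the previous section with the weak $cscK$ equation and a bootstrapping argument, exactly in the spirit of Chen-Tian and He-Zeng. First I would invoke the Corollary to Propositions \ref{prop-wg-001} and \ref{prop-wg-002}: since the hypotheses of Theorem \ref{wg-thm-001} are precisely that $\cG$ connects two non-degenerate energy minimizers, the geodesic is \emph{uniformly non-degenerate}, i.e. there is a uniform $\ep_0>0$ with $f_{\vp_t}>\ep_0$ on every fibre $X_t$. (One must first check the three bullet conditions of Proposition \ref{prop-wg-002} are met: linearity of $\cM$ along $\cG$ follows because the boundary values minimize $\cM$ and $\cM$ is convex along $\cG$ by \cite{BB}, \cite{CLP}, so it is linear; the boundaries are non-degenerate by assumption; and the $W^{1,2}$ regularity of $f_{\vp_0}^{1/2}, f_{\vp_1}^{1/2}$ is Proposition \ref{thm-wg-001}. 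One then needs the intermediate fibres to inherit a $W^{1,2}$-type bound on a power of $f_{\vp_t}$ — this is where I would be most careful, see below.) Combined with the $C^{1,1}$ bound on $\vp_t$, uniform non-degeneracy upgrades to a two-sided bound $\ep'\omega<\omega_{\vp_t}<C\omega$ on each fibre.

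Next I would fix $t$ and work on the fibre $X_t$. The restriction $\vp_t$ is a non-degenerate $C^{1,1}$ energy minimizer in the sense needed for the preceding Lemma, so it satisfies the weak $cscK$ equation \eqref{wg-003} for all test functions $\chi$, equivalently the local weak-form elliptic equation \eqref{wg-007}. The coefficients $g^{\bar\b\a}_{\vp_t}$ are in $C^{0,1}\cap W^{1,p}$ and, by the two-sided bound, uniformly elliptic; $f_{\vp_t}\in W^{1,2}\cap L^\infty$ is bounded away from $0$. Writing $w:=\log f_{\vp_t}$, equation \eqref{wg-007} becomes a divergence-form uniformly elliptic equation for $w$ with bounded measurable coefficients and right-hand side in $L^p$ (the right side involves $R_{\a\bar\b}$, a bounded function of the $C^{1,1}$ metric). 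De Giorgi–Nash–Moser then gives $w\in C^{0,\alpha}_{loc}$, hence $f_{\vp_t}\in C^{0,\alpha}$, hence $\omega_{\vp_t}^n\in C^{0,\alpha}$; feeding this into the complex Monge–Ampère equation $\det(g_{\a\bar\b}+\vp_{t,\a\bar\b})=f_{\vp_t}\det(g_{\a\bar\b})$ and applying Caffarelli / Evans–Krylov–type Schauder estimates gives $\vp_t\in C^{2,\alpha}$. With the metric now $C^{2,\alpha}$, equation \eqref{wg-007} has Hölder coefficients, so $w\in C^{2,\alpha}$, the metric becomes $C^{3,\alpha}$, and a standard bootstrap alternating between the linearized $cscK$ equation and the Monge–Ampère equation yields $\vp_t\in C^\infty(X_t)$. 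Smoothness then promotes the weak equation \eqref{wg-003} to the classical $cscK$ equation $g_{\vp_t}^{\bar\b\a}R_{\a\bar\b}(\omega_{\vp_t})=\underline R$ on $X_t$.

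The main obstacle is the third hypothesis of Proposition \ref{prop-wg-002} for the \emph{interior} fibres $0<t<1$: Proposition \ref{thm-wg-001} only delivers $f_{\vp_t}^{1/2}\in W^{1,2}$ at the two endpoints, where we know $\vp_t$ actually minimizes $\cM$. For interior $t$ the potential $\vp_t$ is not itself an energy minimizer, so the Chen–Tian weak-Ricci-flow argument does not directly apply; one needs some a priori fibrewise $W^{1,2}$ control on a power of $f_{\vp_t}$ coming from the convexity/linearity of $\cM$ along $\cG$ together with the $C^{1,1}$ bound. I would try to extract this from the fact that $i\ddbar_t\cM(\vp_t)\equiv 0$ forces the positive $(n+1,n+1)$-current $i\ddbar\Psi\wedge\rho^n$ to vanish, which controls $i\ddbar_X\log\omega_{\vp_t}^n$ along the fibre directions and hence the fibrewise gradient of $\log f_{\vp_t}$ in an integrated sense; alternatively, one can run the bootstrap only at the endpoints to conclude the two boundary metrics are smooth $cscK$, and then appeal to smoothness/non-degeneracy of the $C^{1,1}$-geodesic connecting two smooth cscK metrics (which is the content needed in the next section and follows from \cite{CFH}) to propagate regularity to all interior fibres. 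Either way, once uniform non-degeneracy plus fibrewise $W^{1,2}$ regularity of $f$ are in hand, the elliptic bootstrap is routine.
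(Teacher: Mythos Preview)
Your overall architecture matches the paper's: establish uniform non-degeneracy via Propositions~\ref{prop-wg-001}--\ref{prop-wg-002}, derive the weak $cscK$ equation~\eqref{wg-003}/\eqref{wg-007} on each fibre, then bootstrap to smoothness as in Chen--Tian and He--Zeng. The elliptic bootstrap you describe is exactly what the paper intends.

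However, the ``main obstacle'' you raise is not an obstacle at all, and in fact you already have the key fact in hand without noticing it. You correctly observe that $\cM$ is convex along $\cG$ and that the endpoints are minimizers, hence $\cM$ is linear along $\cG$. But linearity together with $\cM(\vp_0)=\cM(\vp_1)=\min\cM$ forces $\cM(\vp_t)=\min\cM$ for \emph{every} $t\in[0,1]$: each interior $\vp_t$ is itself a $C^{1,1}$ energy minimizer. Your sentence ``For interior $t$ the potential $\vp_t$ is not itself an energy minimizer'' is therefore simply false, and this is the point you are missing. Once you see this, Proposition~\ref{thm-wg-001} (Chen--Tian) applies directly to every fibre and yields $f_{\vp_t}^{1/2}\in W^{1,2}(X_t)$ for all $t$, so the third bullet of Proposition~\ref{prop-wg-002} is immediate and the Corollary on uniform non-degeneracy follows without further work. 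Likewise the weak $cscK$ Lemma of Section~5 applies to every $\vp_t$ because each one is a non-degenerate energy minimizer, not just the endpoints.

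In short: there is no need for the two workarounds you sketch (extracting fibrewise $W^{1,2}$ control from vanishing of $i\ddbar\Psi\wedge\rho^n$, or propagating regularity from the endpoints via \cite{CFH}); the paper's route is the direct one you outlined first, once you recognise that all fibres are minimizers.
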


\subsection{Time direction}
We are going to understand the regularities in time direction of the geodesic connecting two non-degenerate $C^{1,1}$ energy minimizers.
It is a well known question that whether we can perturb a $C^{1,1}$ geodesic a little to get a smooth one.
Recently Chen--Feldman-Hu \cite{CFH} proved the following theorem, and partially answered this question in a local version.

\begin{theorem}[Chen-Feldman-Hu]
\label{wg-thm-003}
Let $\vp$ be a smooth K\"ahler potential in the space $\cH$.
For any real number $\a\in (0,1)$, there exists a small number $\ep>0$, such that for any K\"ahler potential $\vp_1 \in C^5$ 
satisfying $|\vp_1 - \vp|_{C^{5}} < \ep$, the geodesic $\cG$ connecting $\vp$ and $\vp_1$ is non-degenerate and has $C^{5-\a}$
regularities in both time and space direction.
\end{theorem}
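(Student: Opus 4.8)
The plan is to deduce Theorem~\ref{wg-thm-003} from the $C^{1,1}$ regularity theory for geodesics together with a continuity/perturbation argument anchored at the \emph{trivial} geodesic. Reformulate the geodesic joining $\vp$ and $\vp_1$ as the solution $\Phi$ of the Dirichlet problem for the degenerate homogeneous complex Monge--Amp\`ere equation
\begin{equation*}
(\pi^*\omega + i\ddbar\Phi)^{n+1} = 0, \qquad \pi^*\omega + i\ddbar\Phi \ge 0,
\end{equation*}
on $M = X\times \ol\Sigma$ (with $\Sigma\subset\bC$ an annulus and all data $S^1$-invariant), with boundary values $\vp$ on the inner circle and $\vp_1$ on the outer circle. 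When $\vp_1=\vp$ the unique solution is $\Phi_0(x,\tau)=\vp(x)$, which is smooth and, decisively, \emph{non-degenerate}: $(\pi^*\omega+i\ddbar\Phi_0)|_{X_\tau}=\omega_\vp\ge \ep_0\,\omega$ on every fibre. The governing idea is that non-degeneracy is an open, quantitative property that must persist when $\vp_1$ is close to $\vp$, and that on the non-degenerate locus the Monge--Amp\`ere operator becomes uniformly elliptic along the fibres, so elliptic regularity applies.

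Concretely, I would first regularize: for $\delta>0$ small, solve the non-degenerate equation $(\pi^*\omega+i\ddbar\Phi_\delta)^{n+1}=\delta\,\Omega_0$, $\Phi_\delta|_{\d M}=(\vp,\vp_1)$, for a fixed smooth volume form $\Omega_0$ on $M$; this has a smooth solution by the Caffarelli--Kohn--Nirenberg--Spruck theory of the Dirichlet problem once a subsolution is exhibited (a slightly bent copy of $\Phi_0$ works). The heart of the argument is then a family of a priori estimates \emph{uniform in $\delta$} and in $\vp_1$ as it ranges over a $C^5$-ball about $\vp$: a $C^0$-bound by comparison with $\Phi_0\pm C|\vp_1-\vp|_{C^0}$; a $C^1$-bound; and --- the crux --- the fibrewise non-degeneracy $\pi^*\omega+i\ddbar\Phi_\delta\ge (\ep_0/2)\,\omega$ on each $X_\tau$, established by a barrier/comparison argument controlling the drift of $\Phi_\delta$ from the reference $\Phi_0$ in terms of $|\vp_1-\vp|_{C^2}$. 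With non-degeneracy in hand the equation is, in a suitable gauge, uniformly elliptic, and interior plus boundary Schauder estimates give uniform $C^{5-\a}$-bounds (the loss of $\a$ being the usual Schauder loss propagated from the $C^5$-regularity of the boundary datum $\vp_1$). Finally, letting $\delta\to 0$ and extracting a limit yields $\Phi$ solving the homogeneous equation, still non-degenerate and $C^{5-\a}$ in all directions; by uniqueness of the geodesic (Chen \cite{C00}) this $\Phi$ is the geodesic of the statement, so the theorem follows.

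The main obstacle is exactly this uniform fibrewise non-degeneracy estimate and the companion uniform $C^2$-estimate for the degenerate equation. The linearization of the homogeneous Monge--Amp\`ere operator at $\Phi_0$ is elliptic only in the $\tau$-variable (it is, up to a positive factor, $\d_\tau\d_{\bar\tau}$), so neither the plain implicit function theorem nor the standard a priori estimates for Monge--Amp\`ere equations apply off the shelf; this is precisely why the smallness hypothesis $|\vp_1-\vp|_{C^5}<\ep$ is indispensable, since it is what keeps $\Phi_\delta$ trapped in the non-degenerate regime where ellipticity is recovered. A conceivable alternative is to run an implicit function theorem in an anisotropic H\"older space tailored to this degenerate linearization (or a Nash--Moser iteration), but the continuity-method route keeps the non-degeneracy manifest and is, I expect, closest in spirit to the argument of Chen--Feldman--Hu.
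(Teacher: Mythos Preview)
The paper does not prove this theorem. It is stated as a result of Chen--Feldman--Hu \cite{CFH} and is merely \emph{invoked} in the proof of Proposition~\ref{prop-wg-005}; no argument for it appears anywhere in the text. So there is no ``paper's own proof'' against which to compare your proposal.

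That said, a brief comment on your sketch: the overall architecture (regularize to a non-degenerate Monge--Amp\`ere problem, prove uniform a priori estimates, pass to the limit) is the natural one, and you correctly identify the crux as the uniform fibrewise non-degeneracy estimate. But your justification for that step --- ``a barrier/comparison argument controlling the drift of $\Phi_\delta$ from the reference $\Phi_0$ in terms of $|\vp_1-\vp|_{C^2}$'' --- is a placeholder, not an argument. The maximum principle gives $C^0$-closeness of $\Phi_\delta$ to $\Phi_0$, but that does not by itself control second derivatives, which is what non-degeneracy requires. Likewise, your observation that the linearization at $\Phi_0$ is only elliptic in $\tau$ is exactly why a naive implicit function theorem fails, yet you do not explain how the continuity method circumvents this; the difficulty does not disappear, it reappears as the question of why the uniform $C^2$ estimate closes. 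Without access to \cite{CFH} one cannot say whether their actual argument follows your outline or takes a different route (e.g.\ an anisotropic function space or a more delicate openness argument), but as written your proposal is a plausible strategy with its hardest step left unproved.
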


In our case, the time direction regularities of the geodesic 
will be improved to $C^{5-\a}$ by invoking this theorem, provided that we can prove that the geodesic potentials 
$\vp_t$ and $\vp_{t'}$ is close in $C^5$ norm if $t$ and $t'$ is close enough.
However, this statement is not clear in general, even if we assume $\vp_t$ is smooth along the fibre $X_t$ for every $t$,
and this is where the $cscK$ equation comes into play. 

\begin{prop}
\label{prop-wg-005}
Suppose $\cG$ is a $C^{1,1}$ geodesic connecting two non-degenerate energy minimizers of the Mabuchi functional.
Then the geodesic is at least $C^{5-\a'}$ continuous in both space and time direction.  
\end{prop}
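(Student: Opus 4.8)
The plan is to combine the fiberwise smoothness and $cscK$ equation from Theorem \ref{wg-thm-001} with the perturbation result of Chen--Feldman--Hu (Theorem \ref{wg-thm-003}). The key point we must establish is that the curve $t \mapsto \vp_t$ of potentials, already known to be smooth on each fiber $X_t$, is continuous in the $C^5(X)$-topology; granting that, we may cover $[0,1]$ by finitely many subintervals $[t_i, t_{i+1}]$ so short that $|\vp_{t_i} - \vp_{t_{i+1}}|_{C^5(X)} < \ep(\vp_{t_i})$, apply Theorem \ref{wg-thm-003} on each to conclude that the geodesic joining $\vp_{t_i}$ and $\vp_{t_{i+1}}$ is non-degenerate and $C^{5-\a}$ in both variables, and then invoke uniqueness of the $C^{1,1}$ geodesic with given endpoints to identify it with $\cG|_{X \times [t_i, t_{i+1}]}$. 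Patching these together (and noting the regularity at the interior breakpoints is automatic since it holds on overlapping open subintervals) yields that $\cG$ is $C^{5-\a'}$ on all of $X \times \Sigma$ for $\a' $ slightly larger than $\a$.

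The main obstacle is therefore the $C^5$-continuity in $t$. Here is how I would get it. Fix $t_0 \in [0,1]$. On the fiber $X_{t_0}$, the metric $\omega_{\vp_{t_0}}$ is smooth $cscK$, hence strictly positive and with all derivatives bounded. By the uniform non-degeneracy (Proposition \ref{prop-wg-002}) together with the $C^{1,1}$ bound on $\Phi$, the metrics $\omega_{\vp_t}$ are uniformly bounded above and below on $X$ for all $t$; since $\Phi \in C^{1,1}(X \times \Sigma)$, the map $t \mapsto \vp_t$ is at least Lipschitz into $C^{1,\beta}(X)$ for every $\beta<1$. Now write the fiberwise $cscK$ equation \eqref{wg-007} in the form
$$ g_{\vp_t}^{\bar\b\a} R_{\a\bar\b}(\omega_{\vp_t}) = \underline R, $$
equivalently $\Delta_{\omega_{\vp_t}} \log f_{\vp_t} = \tr_{\omega_{\vp_t}} \Ricci(\omega) - \underline R$ on each $X_t$. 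This is a uniformly elliptic second-order equation for $\vp_t$ with coefficients depending on $\vp_t$ and its first two derivatives; treating $t$ as a parameter and combining the a priori $C^{1,\beta}$-continuity in $t$ with elliptic Schauder estimates, one bootstraps to show $t \mapsto \vp_t$ is continuous (indeed H\"older) into $C^{4,\beta}(X)$, and then one more turn of the bootstrap, using that the right-hand side data $\Ricci(\omega)$ is fixed and smooth, gives continuity into $C^{5,\beta'}(X)$ for some $\beta'>0$. The extra gain above $C^4$ that is needed to feed Theorem \ref{wg-thm-003} (which wants a $C^5$ neighbourhood) comes precisely from this elliptic regularity, not from the geodesic equation.

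I expect the delicate point inside the bootstrap to be establishing \emph{joint} continuity of the coefficients in $(z,t)$ with quantitative modulus, since a priori we only know $\Phi \in C^{1,1}$ in all variables — so the $t$-dependence of the elliptic operator is only Lipschitz, not smooth. The remedy is to differentiate the $cscK$ family only in the $X$-directions (where we have full smoothness) and to treat the $t$-variation by a difference-quotient / continuity-method argument: the difference $\vp_t - \vp_{t_0}$ satisfies a linear elliptic equation whose inhomogeneous term is controlled by the $C^{1,1}$-modulus of $\Phi$ in $t$, so Schauder estimates applied to this linearized equation upgrade the modulus of continuity in $t$ at the level of each successive derivative. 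Once the $C^5$-continuity in $t$ is in hand, the finite-cover-and-patch argument above finishes the proof, with $\a' > \a$ arbitrary.
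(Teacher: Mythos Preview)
Your overall strategy matches the paper's: first show $t \mapsto \vp_t$ is continuous into $C^5(X)$ using the fiberwise $cscK$ equation, then invoke Chen--Feldman--Hu on short subintervals. Your finite-cover-and-patch argument via uniqueness of the $C^{1,1}$ geodesic with given endpoints is in fact spelled out more carefully than in the paper, which leaves that step implicit.

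Where you differ is in how you obtain the $C^5$-continuity in $t$. You propose to propagate continuity in $t$ through a Schauder bootstrap, which, as you yourself anticipate, forces you into a linearized/difference-quotient argument for $\vp_t - \vp_{t_0}$ in order to control the $t$-dependence of the coefficients. The paper sidesteps this entirely with a compactness argument: writing the fiberwise $cscK$ equation as the coupled system $\det(g_{\a\bar\b}+\vp_{t,\a\bar\b}) = e^{F_{\vp_t}}$ and $\Delta_{\vp_t} F_{\vp_t} = \underline R$, the uniform non-degeneracy from Proposition~\ref{prop-wg-002} and the $C^{1,1}$ bound give uniform two-sided bounds on $\omega_{\vp_t}$ and on $|F_{\vp_t}|$, $|\Delta F_{\vp_t}|$; the standard $W^{4,p}\hookrightarrow C^{3,\a}$ estimate followed by one Schauder pass on equation~\eqref{wg-010} then yields a \emph{uniform} $C^{5,\a}(X)$ bound on $\vp_t$ for all $t\in[0,1]$. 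Now for any sequence $t_i\to t_0$, Arzel\`a--Ascoli produces a $C^5$-convergent subsequence, and since the geodesic is already continuous in $t$ at the $C^0$ level the limit must be $\vp_{t_0}$; hence the whole sequence converges in $C^5$. This is shorter and avoids the ``delicate point'' you flag, since one never needs joint regularity of the coefficients in $(z,t)$ beyond boundedness. Your linearization route would also work (and is close in spirit to the alternative argument the paper sketches in Section~5.2), but the compactness-plus-identification trick is the cleaner way to get $C^5$-continuity from uniform $C^{5,\a}$ bounds.
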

\begin{proof}
Locally the $cscK$ equation can be decomposed into two couple second order equations as 
\begin{equation}
\label{wg-008}
\det (g_{\a\bar\b} + \d_{\a}\d_{\bar\b}\vp_t) = e^{F_{\vp_t}},
\end{equation}
and 
\begin{equation}
\label{wg-009}
\Delta_{\vp_t} F_{\vp_t} = \underline{R}.
\end{equation}
Thanks to Proposition (\ref{prop-wg-002}), the function $| F_{\vp_t} |$ and $|\Delta \vp_t|$ are uniformly bounded
along the $C^{1,1}$ geodesic.
Moreover, these conditions also imply that 
the metric is uniformly equivalent along the geodesic, i.e. there exists a constant $C>0$ satisfying 
$$ C^{-1}\omega < \omega_{\vp_t} < C \omega, $$ for any $t\in [0,1]$,
and then $| \Delta F_{\vp_t}| $ is also uniformly bounded by equation (\ref{wg-009}).
Then a standard elliptic estimate implies that the potential is uniformly bounded in $W^{4,p}$.
In particular, the norm $|\vp_t|_{C^{3,\a}}$ is uniformly bounded by some $1> \a >0$ by the Sobolev embedding theorem.

Notice that the function  $F_{\vp}$ is exactly equal to $f_{\vp} - \log \det g$ locally, and then
equations (\ref{wg-008}) and (\ref{wg-009}) can also be written as 
\begin{equation}
\label{wg-010}
g_{\vp_t}^{\bar\b\a} \frac{\d^2}{\d z^{\a} \d \bar z^{\b}} \log f_{\vp_t} = \underline{R} - tr_{\omega_{\vp_t}} Ric (\omega).
\end{equation}
It follows from the Schauder estimates that $f_{\vp_t}$ is uniformly bounded in $C^{1,\a}$,
and then $| \vp_{t}|_{C^{5,\a}}$ is also uniformly bounded.

Now fix a time $t_0\in [0,1]$, and consider a sequence of points $t_i$ such that $t_i\rightarrow t_0$ as $i\rightarrow \infty$.
Then the potential $\vp_{t_i}$ converges to a $C^{5,\a}$ potential $\vp_{\infty}$ in $C^5$ norm by possibly passing to a subsequence.
However, since $\vp_t$ is continuous in time direction along the geodesic, the limit $\vp_{\infty}$ must coincide with the potential $\vp_{t_0}$.
Therefore, for any $\ep>0$, we have $ | \vp_{t_i} - \vp_{t_0}|_{C^5} < \ep$ for all $i$ large. 

\end{proof}

Finally, a direction computation on the complex Hessian of the Mabuchi functional 
proves our Theorem (\ref{thm-wg-000}), once provided that the geodesic $\cG$ has more than $C^4$ regularities.

\subsection{yet another proof}

There is another way to figure out the regularities of the geodesic along the time direction,
by using the $cscK$ equation. Write the equation along the geodesic as follows 
\begin{equation}
\label{wg-011}
g^{\bar\b\a}_{t} \d_\a \d_{\bar\b} \log\det g_t = \underline R. 
\end{equation}
Then take the first time variation of the family of the $cscK$ equations yielding as 
\begin{equation}
\label{wg-012}
 \Delta^2_{\phi} (\delta_t\phi ) - R^{\bar\b\a} (\delta_t\phi)_{,\a\bar\b} = 0.
\end{equation}
This is a one parameter family of the fourth order (strict) elliptic equations, with uniformly bounded coefficients. 
Hence we can lift the regularities of the function $\delta_t\phi$ in the space direction by the standard elliptic estimates.
However, this equation (\ref{wg-012}) can not be derived in the usual sense, since $\delta_t\phi$ is merely a Lipschitz continuous function on $X\times \Sigma$.

What we can do is to take the difference quotient along the time direction in the $cscK$ equation (\ref{wg-011}), i.e. 
$$ \delta_t \phi :=  \frac{\phi(t_0+ t, \cdot) - \phi(t_0, \cdot)}{t}. $$
The difference quotient also satisfies the Leibniz rule, and then equation (\ref{wg-012}) indeed holds for it. 
Finally, observe that all the elliptic estimates coming from equation (\ref{wg-012}) holds uniformly for $t$.
Therefore, we actually improved the regularities of the function $\dot\phi$ in the space, and proved $\dot\phi$ is in fact a smooth function on $X_t$.

We circumvent using the Chen-Feldman-Hu Theorem, 
but the $cscK$ equation plays an essential role in this argument.
Suppose the two boundary values of $\cG$ are no longer energy minimizers. 
Then we lost the $cscK$ equation in general, and it still remains a question to prove the holomorphicity of the vector field $V_t$.

\section{Remarks}
To prove the fiberwise holomorphicity of the vector field,
the full regularities of the geodesic is in fact not needed, 
as we can see from the argument of Theorem (\ref{thm-sc-001}) and Remark (\ref{sc-rem-002}).
In fact, it is hopeful to further weaken this condition to $C^{1,1}$ regularities. 
However, the non-degeneracy of the geodesic indeed played a crucial role in our proof. 
Therefore, the first question we would like to ask is whether the geodesic segment becomes non-degenerate,
when that the Mabuchi functional is linear along it. 

This is not true for a general geodesic ray, as Berman's example (\cite{Ber}) have shown. 
Technically speaking, this is exactly because the $W^{1,2}$ estimate (the last condition in Proposition (\ref{prop-wg-002})) fails 
in this example. 
For a suitable approximation $\vp_{\ep}$ of the geodesic potential $\vp$,
it is nature to consider the following energy around a point $p\in X$
$$ G_{\ep, r}(p) := \fint_{B_r(p)} |\nabla f_{\vp_{\ep}}|^2, $$
and 
$$ G_{\ep} (p): = \lim_{r\rightarrow 0} G_{\ep, r}(p).$$
When the volume form radio jumps, it is exactly the place where this energy blows up to infinite. 
In other words, suppose $\cZ: = \{ f_{\vp}>0 \}$ is the non-degenerate locus of the volume form radio,
and then we expect to have 
$$ \d\cZ = \{ p\in X;\ \ \lim_{\ep\rightarrow 0}G_{\ep}(p) = \infty \}.   $$

Therefore, it might be helpful to study this energy for the purpose of investigating the behaviour of the degenerate locus,
when the Mabuchi function is linear. For example, if we can prove the above set is closed, then 
it is very likely that the degenerate locus $\cZ^{c}$ is an open set of the manifold, and this is completely unknown before.

\section{Appendix }
Let $\Omega$ denote the domain $n$th. unit interval $(0,1)^n$ in $\bR^n$, 
and $u$ is a non-negative function on $\Omega$. 
We say the function $u$ is trivial if it is identically zero outside a set of measure zero.

\begin{lemma}
\label{lem-app2-001}
Suppose a non-trivial function $u$ belongs to the intersection of the spaces 
$W^{1,2}(\Omega)$ and $L^{\infty}(\Omega)$, and satisfies the following gap condition: 
$$ u\geq 1,$$
whenever $u \neq 0$. Then $u \geq 1$ everywhere on $\Omega$. 
\end{lemma}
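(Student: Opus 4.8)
The plan is to show that the set $N:=\{x\in\Omega:\ u(x)=0\}$ (taken up to measure zero, or better: defined via Lebesgue density) and its complement $P:=\{x\in\Omega:\ u(x)\geq 1\}$ form a partition of $\Omega$ into two relatively open sets; since $\Omega$ is connected and $u$ is non-trivial (so $P$ has positive measure, hence is non-empty), this forces $P=\Omega$, which is exactly the claim. The key point is that a $W^{1,2}$ function cannot have a ``jump'' across a hypersurface, so the presence of a gap in the values of $u$ prevents the zero set and the positivity set from touching.

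First I would replace $u$ by a precise representative, e.g. the one defined at every Lebesgue point by $\tilde u(x)=\lim_{r\to0}\fint_{B_r(x)} u$; since $u\in L^\infty$, every point of $\Omega$ is a Lebesgue point up to a set of measure zero, and on the exceptional set we may just set $\tilde u$ by this same $\limsup$, noting the gap hypothesis $u\geq 1$ a.e.\ on $\{u\neq0\}$ passes to $\tilde u$ wherever the limit is positive. Next, consider the truncation $w:=\min\{u,\tfrac12\}$. Then $w\in W^{1,2}(\Omega)$ with $\nabla w=\nabla u\,\chi_{\{u<1/2\}}$ a.e.; but by the gap condition $\{0<u<1/2\}$ has measure zero, so $\nabla w=0$ a.e.\ on $\Omega$. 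Hence $w$ is (a.e.\ equal to) a constant on each connected component of $\Omega$, and $\Omega$ is connected, so $w\equiv c$ for some constant $c\in\{0,\tfrac12\}$. If $c=\tfrac12$ then $u\geq\tfrac12$ a.e., hence $u\neq0$ a.e., hence $u\geq1$ a.e., and then the Lebesgue-point representative satisfies $\tilde u\geq1$ everywhere, done. The remaining case is $c=0$, i.e.\ $u<\tfrac12$ a.e.; combined with the gap this gives $u=0$ a.e., contradicting non-triviality.

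Thus the argument reduces to the elementary fact that a $W^{1,2}$ function with vanishing gradient on a connected open set is a.e.\ constant there, together with the observation that the gap condition makes $\{0<u<1/2\}$ null so that the truncated function has zero gradient. I would then spell out the passage from ``$u\geq1$ a.e.'' to ``$u\geq1$ everywhere on $\Omega$'' via the Lebesgue differentiation theorem: for every $x\in\Omega$, $\tilde u(x)=\lim_{r\to0}\fint_{B_r(x)}u\geq 1$ since the integrand is $\geq1$ a.e. The main obstacle, such as it is, is purely bookkeeping about representatives: one must be careful that the ``gap condition'' and the conclusion are statements about the precise representative $\tilde u$, not about an arbitrary $L^\infty$ representative, since only for the precise representative is the pointwise conclusion meaningful. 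No deep analysis is needed beyond Sobolev truncation and Lebesgue points.
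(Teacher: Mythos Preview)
Your argument is correct and in fact cleaner than the paper's. The key step---truncating at $1/2$, noting that $\nabla\min(u,\tfrac12)=\nabla u\cdot\chi_{\{u<1/2\}}$ vanishes a.e.\ because the gap forces $\{0<u<1/2\}$ to be null and $\nabla u=0$ a.e.\ on the level set $\{u=0\}$, then invoking connectedness---is a standard and efficient route. Your discussion of precise representatives is also more careful than the paper's, which treats the passage from ``a.e.'' to ``everywhere'' somewhat informally.

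The paper proceeds quite differently: it first reduces to a $\{0,1\}$-valued function, then uses Fubini to restrict $u$ to one-dimensional slices $(0,1)\times\{y\}$, where the $W^{1,2}$ Sobolev embedding in dimension one gives continuity, forcing each slice to be identically $0$ or identically $1$; it then repeats in the other coordinate directions and inducts on $n$. Your truncation argument bypasses both the slicing and the induction on dimension, relying only on the fact that a Sobolev function with zero weak gradient on a connected domain is a.e.\ constant. The paper's approach has a mild advantage in being self-contained (it does not cite Stampacchia-type facts about gradients on level sets), but yours is shorter and conceptually more transparent.
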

\begin{proof}
First we can assume the function $u$ is either equal to $1$ or $0$,
otherwise replace $u$ by $(1-u)_{+}$, and the energy $\int_{\Omega} |\nabla u|^2$ decreases 
under this change by the property of maximum operator \cite{GT}.

Assume $n=2$, and $\Omega$ is the unit square $(0,1) \times (0,1)$. We will first illustrate our idea in this case. 
The energy can be decomposed as follows 
\begin{equation}
\label{app2-001}
\int_{\Omega} |\nabla u|^2 dxdy = \int_{\Omega} |D_{x} u|^2 + \int_{\Omega} |D_{y} u|^2.
\end{equation}
Hence we have 
$$ \int_0^1 \left( \int_0^1 |D_{x} u|^2 dx \right) dy <  +\infty, $$
and then Fubini's Theorem implies that for almost everywhere $y\in (0,1)$, we have 
$$ \int_{0}^1 |D_x u|^2(x,y) dx < +\infty.$$
Therefore, the restriction of $u$ on the interval $(0,1) \times \{ y\}$ is $W^{1,2}$, 
and then $u$ is continuous along this interval, thanks to the Sobolev embedding theorem. 
That is to say, for almost everywhere $y\in (0,1)$, the function $u(x,y)$ is identically $1$ or $0$ on the slice $(0,1) \times \{ y\}$.

On the other hand, we have from equation (\ref{app2-001}) 
$$ \int_0^1 \left( \int_0^1 |D_{y} u|^2 dy \right) dx <  +\infty.$$

As we have proved, for any $x_1, x_2\in (0,1)$, the function $u(x_1,y)$ equals to $u(x_2, y)$ for almost everywhere $y \in (0,1)$.
Therefore, the partial derivatives satisfy 
$$ \int_0^1 | D_y u |^2 (x_1, y) dy  = \int_0^1 | D_y u|^2 (x_2, y)dy,$$
and then we have for each $x_0\in (0,1)$
\begin{equation}
\label{app2-002}
\int_0^1 |D_y u |^2 (x_0, y) dy < +\infty,
\end{equation}
The same argument implies that the restriction $u_{\{x_0 \} \times (0,1)}$ is also continuous,
and therefore $u$ is identically equal to $1$ on the square since it is non-trivial. 

For $n>2$, we can also decompose the energy as 
$$ \int_{\Omega} |\nabla u|^2 dx_1\cdots dx_n = \int_0^1 \int_{\Omega^{n-1}} |\nabla_{n-1} u|^2  + \int_{\Omega^{n-1}}\int_0^1 |D_{x_n} u|^2, $$
and then the result follows in a similar way by induction on the dimension of $\Omega$. 
\end{proof}

\begin{bibdiv}
\begin{biblist}

\bib{Ber}{article}{
   author={Berman, R.},
   title={On the strict convexity of the K-energy},
   journal={arXiv:1710.09075}
}

\bib{BB}{article}{
   author={Berman, R.}
   author={Berndtsson, B.},
   title={Convexity of the K-energy on the space of K\"ahler metrics and uniqueness of extremal metrics},
   journal={JAMS},
   volume={30},
   date={2017},
   number={4},
   pages={1165--1196},
}

\bib{Bo}{article}{
   author={Berndtsson, Bo},
   title={Subharmonicity properties of the Bergman kernel and some other functions associated to pseudoconvex domains},
   journal={Ann. Inst. Fourier, Grenoble},
   volume={56}
   date={2006}
   number={6}
   page={1633-1662}
}

\bib{Bo11}{article}{
   author={Berndtsson, Bo},
   title={A Brunn-Minkowvski type inequality for Fano manifolds and some uniqueness theorems in K\"ahler geometry},
   journal={Invent. math.},
   volume={200}
   date={2015}
   page={149-200}
}

\bib{C00}{article}{
   author={Chen, Xiuxiong},
   title={The space of K\"ahler metrics},
   journal={J. differential geometry},
   volume={56}
   date={2000}
   page={189-234}
}

\bib{CFH}{article}{
   author={Chen, Xiuxiong},
   author={ M. Feldman}, 
   author={J. Hu},
   title={ Geodesically convexity of small neighbourhood in space of K\"ahler metrics},
   journal={Preprints},
}

\bib{CLP}{article}{
   author={Chen, Xiuxiong},
   author={Li, Long}
   author={P\u aun, Mihai},
   title={Approximation of weak geodesics and  subharmonicity of Mabuchi energy},
   journal={ Annales de la faculte des sciences de Toulouse Ser. 6},
   volume={25},
   date={2016},
   number={5},
   page={935-957}
}

\bib{CPZ}{article}{
   author={Chen, Xiuxiong},
   author={P\u aun, Mihai},
   author={Yu, Zeng}
   title={On deformation of extremal metrics },
   journal={arXiv:1506.01290},
}

\bib{CT}{article}{
   author={Chen, Xiuxiong},
   author={Tian, Gang}
   title={Geometry of K\"ahler metrics and foliations by holomorphic disks},
   journal={Publ. Math. Inst. Hautes \'Etudes Sci. },
   number={107},
   date={2008},
   page={1-107}
}

\bib{Dem}{article}{
   author={J.P. Demailly},
   title={Regularization of closed positive currents and Intersection Theory},
   journal={J. ALG. GEOM},
   volume={1},
   date={1992},
   page={361-409}
}

\bib{GT}{article}{
   author={Gilbarg, D.},
   author={Trudinger, N.S.}
   title={Elliptic partial differential equations of second order},
   journal={Springer},
   date={2001}
}

\bib{HZ}{article}{
   author={He, Weiyong },
   author={Zeng, Yu },
   title={Constant scalar curvature equation and the regularity of its weak solution},
   journal={arXiv:1705.01236},
}

\bib{Wang}{article}{
   author={Wang, Xu},
   title={A curvature formula associated to a family of pseudoconvex domains},
   journal={arXiv:1508.00242},
}

\end{biblist}
\end{bibdiv}

\end{document}